\newtheorem{theorem}{Theorem}[section]
\newtheorem{lemma}[theorem]{Lemma}
\newtheorem{proposition}[theorem]{Proposition}
\newtheorem{claim}{Claim}
\theoremstyle{remark}
\newtheorem{remark}[theorem]{\it \bf{Remark}\/}
\numberwithin{equation}{section}
\def\section{\@startsection{section}{1}%
  \z@{1.5\linespacing\@plus\linespacing}{.5\linespacing}%
  {\normalfont\bfseries\large\centering}}
\newcommand{\be}{\begin{equation}}
\newcommand{\ee}{\end{equation}}
\newcommand{\bea}{\begin{eqnarray}}
\newcommand{\eea}{\end{eqnarray}}
\newcommand{\bee}{\begin{eqnarray*}}
\newcommand{\eee}{\end{eqnarray*}}
\def\NN{\mathbb{N}}
\def\RR{\mathbb{R}}
\def\fref#1{{\rm (\ref{#1})}}
\def\supess{\mathop{\operator@font Sup\,ess}}
\def\NN{\mathbb{N}}
\def\RR{\mathbb{R}}
\def\e{\varepsilon}
\def\bar#1{{\overline #1}}
\def\fref#1{{\rm (\ref{#1})}}
\def\R2+{\RR ^2_+}
\def\lsl{\frac{\lambda_s}{\lambda}}
\def\lim{\mathop{\rm lim}}
\def\sup{\mathop{\rm sup}}
\def\l{\lambda}
\def\log{{\rm log}}
\def\lsl{\frac{\lambda_s}{\lambda}}
\def\xsl{\frac{x_s}{\lambda}}
\def\cal{\mathcal}
\def\matchal{\mathcal}
\def\qz{q_0}
\def\qu{q_1}
\def\pp{p}
\def\qnz{q}
\def\zz{Z}
\def\Yz{Y_0}
\title[Blow up for the critical gKdV III]{Blow up for the critical gKdV equation III: \\
exotic regimes}
\author[Y. Martel]{Yvan Martel}
\address{Universit\'e de Versailles St-Quentin and Institut Universitaire de France, LMV  CNRS UMR8100}
\email{yvan.martel@uvsq.fr}
\author[F. Merle]{Frank Merle}
\address{Universit\'e de Cergy Pontoise and Institut des Hautes Etudes Scientifiques, AGM CNRS UMR8088}
\email{merle@math.u-cergy.fr}
\author[P. Rapha\"el]{Pierre Rapha\"el}
\address{Universit\'e Paul Sabatier and  Institut Universitaire de France, Institut de Math\'ematiques de Toulouse, CNRS UMR          5219}
\email{pierre.raphael@math.univ-toulouse.fr}
\begin{document}

\begin{abstract}
We consider the blow up problem in $H^1$ for the $L^2$ critical (gKdV) equation in the continuation of \cite{MMR1}, \cite{MMR2}.
We know from \cite{MMR1} that the unique and stable blow up rate for $H^1$ solutions close to the solitons with strong decay on the right is $$\|u_x(t)\|_{L^2} \sim\frac{1}{T-t}   \quad \hbox{as $t\uparrow T<+\infty$}.$$ 
In this paper,  we construct non-generic blow up regimes in $H^1$  by considering initial data with explicit slow  decay on the right in space. We obtain finite time blow up solutions with speed
$$
\|u_x(t)\|_{L^2} \sim \frac{1}{(T-t)^\nu} \quad \hbox{as $t\uparrow T<+\infty$}, \ \ \nu>\frac{11}{13},
$$
as well as global in time growing up solutions with both exponential growth $$
\|u_x(t)\|_{L^2} \sim e^{t} \quad \hbox{as $t\to +\infty$},
$$
or any power growth
$$
\|u_x(t)\|_{L^2} \sim t^{\nu} \quad \hbox{as $t\to +\infty$}, \ \ \nu>0.
$$
These solutions can be taken with initial data arbitrarily close in $H^1$ to the ground state solitary wave.
\end{abstract}

\maketitle
\section{Introduction}
\subsection{Setting of the problem}
We consider the $L^2$-critical generalized Korteweg--de Vries equation (gKdV) 
\begin{equation}\label{kdv}
{\rm (gKdV)}\quad  \left\{\begin{array}{ll}
 u_t + (u_{xx} + u^5)_x =0, \quad & (t,x)\in [0,T)\times {\mathbb R}, \\
 u(0,x)= u_0(x), & x\in {\mathbb R}.
\end{array}
\right.
\end{equation}
The Cauchy problem is locally well-posed in the energy space $H^1$ from Kenig, Ponce and Vega \cite{KPV,KPV2}. Given $u_0 \in H^1$, there exists a unique\footnote{in a certain sense} maximal solution $u(t)$ of \eqref{kdv} in $C([0,T), H^1)$ with either $T=+\infty$, or $T<+\infty$ and then $\lim_{t\to T} \|u_x(t)\|_{L^2} = +\infty$.\\
For $H^1$ solution, the mass and the energy are conserved by the flow: $\forall t\in [0,T)$,  $$M(u(t))=\int u^2(t)= M(u_0), \ \ E(u(t))= \frac 12 \int u_x^2(t) - \frac 16 \int u^6(t)= E(u_0).$$
Equation \eqref{kdv} has the following invariances: if $u(t,x)$ is solution of \eqref{kdv} then $-u(t,x)$, $ u(-t,-x)$ and 
 $$\lambda_0^{\frac12}u(\lambda_0^3 (t-t_0),\lambda_0 (x -x_0)), \ \ (\l_0,t_0,x_0)\in \RR^*_+\times \RR\times \RR$$ are also  solutions of \eqref{kdv}.\\
The family of  traveling wave solutions of \eqref{kdv},  called solitons, plays a distinguished role in the analysis:
$$u(t,x) = Q_{\l_0}(x-\l_0^{-2} t-x_0), \ \ (\l_0,x_0)\in\RR^*_+\times \RR,$$ with  
\begin{equation}\label{eq:Q}
Q_\l(x) = \frac{1}{\l^{\frac 12} }Q \left(\frac x\l\right) ,\quad Q(x) =   \left(\frac {3}{\cosh^{2}\left( 2 x\right)}\right)^{\frac14}, \quad
Q''+Q^5 =Q.
\end{equation}
It is well-known that the function $Q$ is related to the following sharp Gagliardo-Nirenberg inequality (\cite{W1983})
\be\label{gn}
\forall v\in H^1,\quad
\int |v|^6 \leq \int v_x^2 \left(\frac {\int v^2}{\int Q^2}\right)^2.
\ee
Moreover,  from   \eqref{gn},   mass and energy conservations, for
initial data in $H^1$  such that $\|u_0\|_{L^2}<\|Q\|_{L^2}$, the corresponding solution $u(t)$ of \eqref{kdv} is bounded in $H^1$ and thus globally defined in time.

\subsection{On the classification of the flow near $Q$}  
For 
\begin{equation}
\label{utwosmall}
\|Q\|_{L^2}<\|u_0\|_{L^2}<\|Q\|_{L^2}+\alpha_0, \ \ \alpha_0\ll1 
\end{equation}
the blow  up problem 
has been first studied in a series of works by Martel and Merle \cite{MMjmpa,MMgafa,Mjams,MMannals, MMjams}.
In particular, from a  rigidity theorem around solitons (\cite{MMjmpa}),  the first  proof of blow up in finite or infinite time was obtained (\cite{Mjams}) for 
initial data 
\be\hbox {$u_0 \in H^1$ such that \eqref{utwosmall} and  $E(u_0)<0$.}
\label{classe}
\ee 
Recently, in \cite{MMR1,MMR2},  the authors of the present paper have revisited the blow up analysis for data near the ground state. First, in the so-called minimal mass case $\|u_0\|_{L^2}= \|Q\|_{L^2}$, the following existence and uniqueness results  complement   results in \cite{MMduke}.

\medskip

\noindent{\bf Minimal mass blow up solution  {\rm (\cite{MMR2}, \cite{MMduke})}.} 
{\it {\rm (i) Existence.} There exists a solution $S(t)\in \mathcal C((0,+\infty),H^1)$  to \fref{kdv} with minimal mass $\|S(t)\|_{L^2}=\|Q\|_{L^2}$ such that  
 \be\label{th1:1.4}\|  S_x(t)\|_{L^2}\sim \frac{\|Q'\|_{L^2}}{ t}\ \ \mbox{as} \  t\downarrow 0.
 \ee  \\
{\rm  (ii) Uniqueness.}
Let $u$ be an $H^1$  blow up solution to \eqref{kdv}  with minimal mass $\|u(t)\|_{L^2} = \|Q\|_{L^2}$. Then $u=S$ up to the invariances of the (gKdV) equation.}

\medskip

Second,    \cite{MMR1,MMR2} yield a classification of the flow for initial data close to $Q$ {\it with decay on the right  in space}. More precisely, let
$$
\mathcal{A}=
\left\{
u_0=Q+\e_0   \hbox{ with } \|\e_0\|_{H^1}<\alpha_0 \hbox{ and }
\int_{y>0} y^{10}\e_0^2< 1
\right\},
$$
$$\mathcal T_{\alpha^*}=\left\{u\in H^1\ \ \mbox{with}\ \ \inf_{\l_0>0, \, x_0\in \RR}
\left\|u- Q_{\l_0}\left( .-x_0 \right) \right\|_{L^2} <\alpha^*\right\}.$$
Then the following classification  result holds:

\medskip

\noindent{\bf Classification in $\mathcal A$ {\rm (\cite{MMR1,MMR2})}.} 
Let $0<\alpha_0\ll\alpha^*\ll 1$.
{\sl Let   $u_0 \in \mathcal{A}$ and 
  $u \in \mathcal C ([0,T),H^1)$ be the corresponding solution of \eqref{kdv}.
Then, one of the following three scenarios occurs:

\medskip

\noindent{\bf (Blow up)}  For all $t\in [0,T),$ $u(t)\in \mathcal{T}_{\alpha^*}$ and the solution blows up in finite time $T<+\infty$ with
\be\label{papI1}
\|u_x(t)\|_{L^2} \sim \frac {\|Q'\|_{L^2}}{\ell_0 (T-t)} \quad \hbox{as $t\uparrow T$ for some $\ell_0>0$.} 
\ee

\noindent{\bf (Soliton)}  The solution is global and
\begin{equation}
  u(t,  \cdot +x(t))\to Q_{\l_\infty} \hbox{ in $H^1_{\rm loc}$ as $t\to +\infty$ for  $|\l_\infty-1| + |x'(t)-1|\lesssim \delta(\alpha_0)$},
\end{equation} 
where $\delta(\alpha_0)\to 0$  as $\alpha_0\to 0$.
\smallskip

\noindent{\bf (Exit and $S$ dynamics)} The solution  $u$ exits the tube   $\mathcal{T}_{\alpha^*}$ at some time  $t_u\in (0,T)$, and  there exist    $\l_u>0$, $x_u\in \RR$,  such that
$$\| {\l_u^{\frac 12}} u(t_u, \l_u x+x_u) - S(t^*,x)\|_{L^2} \leq \delta(\alpha_0),$$
where $\delta(\alpha_0)\to 0$  as $\alpha_0\to 0$ and where $t^*>0$ depends only on $\alpha^*$.\\
Moreover, assume that $S$ scatters at $+\infty$, then  $u$ is global  and scatters at $+\infty$.
  }
  
  \medskip
  
In particular, this   indicates that for initial data in $\mathcal{A}$, only one type of blow up is possible. In this paper, we prove that for initial data in $H^1$, but   with slow decay, different blow up behaviors are possible close to solitons.
It means that the decay assumption in the definition of $\mathcal A$ is not a technical one.

\subsection{Exotic blow up regimes}
We now consider initial data $u_0\not \in \mathcal{A}$ in the sense that they display an explicit slow decay on the right. Our main result in this paper says that the  blow up rate $\frac 1{(T-t)}$, which is universal in $\mathcal A$, is not valid anymore for such initial data. Indeed, we  produce a wide range of different blow up rates, including grow up in infinite time.

\begin{theorem}[Exotic blow up regimes]\label{th:1}\ \\
{\rm (i)  Blow up in finite time:} for any $\nu > \frac {11}{13}$,  there exists $u \in  C((0,T_0],H^1)$   solution  of \eqref{kdv} blowing up at $t=0$ with
\be\label{th:1:1}
\|u_x(t)\|_{L^2} \sim t^{-\nu}\ \ \mbox{as}\ \ t\downarrow 0^+.\ee
{\rm (ii)  Grow up in infinite time:} there exists  $u\in  C([T_0,+\infty),H^1)$ solution of \eqref{kdv} growing  up at $ +\infty$ with
\be\label{th:1:4}
\|u_x(t)\|_{L^2} \sim e^{t} \ \ \mbox{as}\ \ t\to +\infty.
\ee
For any $\nu>0$,  
there exists  $u\in  C([0,+\infty),H^1)$ solution of \eqref{kdv} growing  up at $ +\infty$ with
\be\label{th:1:5}
\|u_x(t)\|_{L^2} \sim t^{\nu}\ \ \mbox{as}\ \ t\to +\infty.
\ee
Moreover, such solutions can be taken arbitrarily close in $H^1$ to the family of solitons.
\end{theorem}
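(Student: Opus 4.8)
The overall strategy follows the modulation-theoretic philosophy developed in \cite{MMR1,MMR2}, but with a modified ansatz that deliberately builds in the slow spatial decay on the right. Recall that the universal rate $\frac{1}{T-t}$ in $\mathcal A$ comes from the law $\lambda_s/\lambda = -b$, $b_s = -b^2$ (in renormalized time $s$ with $\frac{ds}{dt}=\lambda^{-3}$), which is itself forced by the sharp tail behavior of the approximate profile $Q_b$ and the flux computation coming from the almost self-similar equation. The idea is to construct, for a parameter $b>0$ small, a refined profile $Q_b$ together with a slowly decaying tail correction — schematically an additive term behaving like $b\, c(y)$ with $c$ bounded but \emph{not} integrable against the fast exponential weight, so that $\int_{y>0} y^{10} Q_b^2 = +\infty$ and the solution genuinely lies outside $\mathcal A$. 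One then decomposes the solution as $u(t,x) = \frac{1}{\lambda^{1/2}}(Q_b + \e)(s, \frac{x-x(t)}{\lambda})$ with orthogonality conditions on $\e$ fixing the modulation parameters $(\lambda,x,b)$, and the slow tail will change the dominant balance in the modulation equation for $b$.

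The key steps, in order, are as follows. First, \textbf{construction of the profile}: solve (approximately) the profile equation $(Q_b)_{yy} - Q_b + Q_b^5 + b(\tfrac12 \Lambda Q_b + (\cdot) \partial_y Q_b) \approx 0$ where $\Lambda = \frac12 + y\partial_y$, but now allowing a tail that does not decay, parametrized so as to produce, after the flux/virial computation, a modulation law of the form $b_s \approx - \theta\, b^2$ with an adjustable constant (and a sign that can be reversed), rather than the rigid $b_s=-b^2$. Second, \textbf{modulation and the dynamical system}: set up the decomposition, derive the equations for $\lambda_s/\lambda \approx -b$, $x_s/\lambda \approx 1$, $b_s \approx -\theta b^2 + (\text{error})$, and show the error terms are controlled by a bootstrap on $\e$. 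Third, \textbf{energy estimates for $\e$}: this is the technical heart — one needs a monotonicity (Lyapunov) functional adapted to the \emph{slowly decaying} profile, controlling $\|\e\|_{H^1}$ or a weighted version thereof, with the localization/weight chosen compatibly with the non-integrable tail; the virial-type estimate near the soliton and the $L^2$ weighted estimate from \cite{MMR1} must be revisited because the tail contributes new boundary terms. Fourth, \textbf{integrating the modulation ODEs}: from $b_s \approx -\theta b^2$ and $\lambda_s/\lambda \approx -b$ and $\frac{dt}{ds}=\lambda^3$, one reads off $\lambda(t) \sim (T-t)^{?}$; tuning the profile parameter $\theta$ (equivalently, the strength and shape of the slow tail) yields $\lambda \sim (T-t)^{1/(2\nu+1)}$ hence $\|u_x\|_{L^2}\sim \lambda^{-1}\sim (T-t)^{-\nu}$ for any $\nu>\frac{11}{13}$; choosing instead $b_s\approx +\theta b^2$ (tail of opposite sign / different branch) gives global solutions, with the degenerate case $b_s \approx 0$, $\lambda_s/\lambda \approx -b$ constant producing exponential growth $\lambda \sim e^{-t}$, i.e. $\|u_x\|_{L^2}\sim e^t$, and sub-balances producing $t^\nu$ for any $\nu>0$. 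Fifth, \textbf{closing the bootstrap and constructing the solution}: run a backward-in-time construction (compactness/fixed-point on a sequence of solutions emanating from data at times $t_n$) to produce an exact solution with the prescribed behavior and with $\|\e\|_{H^1}$ small, whence the "arbitrarily close to the solitons" statement.

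I expect the \textbf{main obstacle} to be the energy estimate for $\e$ in the presence of the non-integrable tail. In \cite{MMR1} the monotonicity arguments exploit the strong (exponential) decay of $Q_b$ on the right to kill boundary terms in the virial and weighted-$L^2$ computations; here that decay is absent by design, so one must either (a) use a weight that still decays fast enough to close the local virial but then carefully estimate the flux of $\e$ through the tail region using the KdV dispersive smoothing / monotonicity of mass on the right (the "$\int_{y>y_0}$" quantities from \cite{MMjams,MMR1}), or (b) work with a finite but large power-weight $\int y^{k}\e^2$ matching the tail of the profile and track its slow growth. Reconciling the threshold $\nu>\frac{11}{13}$ with the available coercivity — i.e. determining exactly how slow a tail the nonlinear energy estimates can tolerate — is precisely where the number $\frac{11}{13}$ will emerge, presumably from optimizing the exponent in the weighted estimate against the scaling of the error terms in the $b_s$ equation. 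The remaining steps (modulation algebra, integration of the ODE system, the backward construction) are by now fairly standard given \cite{MMR1,MMR2} and the minimal-mass construction in \cite{MMR2,MMduke}.
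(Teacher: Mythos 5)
Your proposal identifies the right general framework (modulation, renormalized time, a tail that perturbs the $b$ equation), but the mechanism you describe is not the one that operates, and two essential structural ingredients are missing. First, you propose to build the slow tail into the profile $Q_b$ itself so as to change the modulation law into $b_s\approx-\theta b^2$ with an adjustable coefficient. This is not how the paper proceeds, and it is doubtful it can work as stated: the coefficient of $b^2$ in the $b_s$ equation is rigid (it comes from $(\Psi_b,Q)=-\tfrac{b^2}{8}\|Q\|_{L^1}^2+O(|b|^3)$ and the structure of $Q_b=Q+b\chi_bP$), and a non-localized profile would, as you yourself note, wreck the virial and weighted-$L^2$ estimates. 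The paper instead keeps the standard, exponentially localized $Q_b$ and writes $u=\lambda^{-1/2}(Q_b+pY_0+\e)(\cdot)+q_0(t,x)$, where $q_0$ is the \emph{free gKdV evolution} of a slowly decaying tail $f_0(x)=c_0x^{-\theta}$ supported far to the right of the soliton, and $p(t)=\lambda^{1/2}q_0(t,x(t))$ is a scalar recording the tail's value at the soliton. The remainder $\e$ stays well localized, so the machinery of \cite{MMR1} applies after adding controllable interaction terms; the tail itself is handled by separate monotonicity estimates for $q_0$ (Lemma \ref{le:qz}). The resulting modulation law is not $b_s\approx-\theta b^2$ but the forced conservation law $\frac{d}{ds}\bigl(\frac{b}{\lambda^2}+\frac{4c_0}{\int Q}\lambda^{-3/2}x^{-\theta}\bigr)\approx 0$.

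Second, the exotic regimes arise only in the \emph{doubly degenerate} case $\ell_0=\ell_1=0$ of this conservation law: for generic data in this class the solution still blows up at the stable rate $1/(T-t)$. Selecting the threshold regime requires tuning two initial parameters $(\lambda_0,b_0)$ against two outgoing instability directions via a Brouwer-type topological argument; this step is entirely absent from your proposal, and without it the backward compactness construction you sketch cannot single out the exotic rates. Relatedly, your heuristic for the exponential case is incorrect: $b$ constant with $\lambda_s/\lambda=-b$ gives $\lambda\sim e^{-bs}$ but then $t=\int\lambda^3ds$ converges, i.e.\ finite-time blow up, not $\lambda\sim e^{-t}$. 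In the paper the exponential regime is the borderline $\beta=1/3$ of the family $\lambda(s)=s^{-\beta}$, $b(s)=\beta/s\to0$, for which $t\sim\log s$. Your intuition about where $11/13$ comes from is essentially right (it is the value of $\nu=\beta/(3\beta-1)$ at the largest $\beta=11/20$ tolerated by the energy estimates), but the path to it goes through the threshold dynamics and the decomposition above, not through a modified profile.
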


\noindent {\bf Comments on Theorem \ref{th:1}.}
\smallskip

{\it 1. Sharpness of the results in \cite{MMR1, MMR2}}. Theorem \ref{th:1} above shows the optimality of the results in  \cite{MMR1, MMR2} since it proves that some decay assumption (such as $u_0\in \mathcal A$) is required to obtain a unique  stable blow up rate $1/(T-t)$. This is in contrast with the nonlinear Schr\"odinger equation, for which the stable blow up rate is obtained in $H^1$, without additional decay assumption (see \cite{MRjams} and references therein). Note from the proof that the solutions obtained in Theorem \ref{th:1} are expected to be unstable (except may be for $\nu<1$ in \eqref{th:1:1}). Indeed, they are constructed using a topogical argument involving two possible directions of instability.\\

{\it 2}.
It is proved in \cite{Mjams, MMannals} that initial data $u_0$ such that \eqref{classe} generate solutions that blow up in finite or infinite time. The proof is by obstruction and Liouville classification and does not provide any estimate on the blow up speed. This $H^1$ result is also sharp in the sense that  from Theorem~\ref{th:1},  both finite or infinite time blow up may occur in $H^1$.  All these results  thus complement each other.\\

{\it 3. On the role of tails.} As one can see from the proof of Theorem \ref{th:1}, the blow up rate is directly related to the precise behavior of the initial data on the right. In particular, other type of blow up speeds can be produced by similar arguments by adjusting the tail of the initial data. A   similar phenomenon was observed for global in time growing up solutions to the parabolic energy critical harmonic heat flow by Gustafson, Nakanishi and Tsai \cite{NT2}. There an explicit formula on the growth of the solution at infinity is given directly in terms of the initial data which is conceptually very similar to what we observe for (gKdV). 

Recall that continua of blow up rates were   observed in   pioneering works by Krieger, Schlag and Tataru \cite{KST}, \cite{KST2} for   energy critical wave problems (see also Donninger and Krieger \cite{DK}). We also refer to Fila et al. \cite{FKWY} for a   formal approach in the case of the energy critical heat equation. All these results point out that the sole critical topology is not enough to classify the flow near the ground state.\\

{\it 4. On the   decay assumption}. In \cite{MMR1} (see the definition of $\mathcal A$), the assumption $\int y^{10} \e^2<1$ is not sharp.
In Theorem \ref{th:1}, the solution contains a tail  of the form $x^{-\theta}$ for $x\gg 1$, where $\theta\in(1,\frac {29}{18})$. By now, it is not clear what is the sharp decay assumption on the initial data required to get  the stable blow up rate in \cite{MMR1}.\\

\noindent{\bf Aknowledgments}. P.R. is supported by the French ERC/ANR project SWAP. Part of this work was completed was P.R. was visiting the Mathematics Department at MIT which he would like to thank for its kind hospitality. This work is also supported by the project ERC 291214 BLOWDISOL.\\

\noindent{\bf Notation}. 
For $f,g\in L^2$, we note the scalar product: $$(f,g)=\int f(x)g(x)dx.$$ We introduce the generator of the $L^2$ scaling symmetry $$ \Lambda f=\frac12f+yf'. $$ We let the linearized operator close to the ground state be:
\be\label{deflplus}  
Lf=-f''+f-5Q^4f.
\ee  
 For a given  small constant $0<\alpha^*\ll1 $,  $\delta(\alpha^*)$ denotes a   small constant with $$\delta(\alpha^*)\to 0\ \ \mbox{as}\ \ \alpha^*\to 0.$$ We denote by  ${\mathbf{1}}_I$   the characteristic function of the interval $I$.


\subsection{Strategy of the proof}\label{strategy} 
\label{vneovnoen}

\noindent(i) \emph{Definition and role of the slow decaying tail}.
 Given $c_0\in \RR$,  $x_0\gg 1$, $\theta>1$, we fix
 a smooth function $f_0$ which corresponds to a slowly decaying tail:
 \be\label{deffzintro}
f_0(x)= c_0 x^{-\theta} \hbox{ for $x>\frac {x_0}2$},\quad
f_0(x)=0 \hbox{ for $x<\frac {x_0}4$}, 
\ee
and  
 $\qz$   the solution of 
\be\label{defqzintro}
\partial_t \qz + \partial_x(\partial_x^2 \qz + \qz^5)=0,\quad
\qz(0,x)= f_0(x).
\ee
We then consider the solution to \fref{kdv} with initial data $Q+f_0$ and claim that it admits a decomposition of the form  
\be\label{uu}
u(t,x)  =  \frac 1{\l^{\frac 12}(t)} \left( Q_{b(t)} + \l^{\frac 12}(t) q_0(t,x(t)) \Yz + \e   \right) \left( t,\frac {x-x(t)}{\l(t)}\right)
+q_0(t,x)
\ee
for some $$\|\e(t)\|_{H^1}\ll 1$$
and where $Y_0$ is a fixed function (see Lemma \ref{lemmainvert} for the definition of $Y_0$ and Proposition \ref{le:2} for the justification of this correction term). 
An essential feature of the nonlinear (gKdV) flow is that $q_0(t,x)$ conserves for $x\gtrsim t$  the slow decay of   $f_0(x)$ (see Lemma \ref{le:qz}). This tail then acts like an external force on the coupled  system of modulation equations driving $(b(t),\l(t),x(t))$ and modify its behavior.

\medskip

\noindent (ii) \emph{Dynamical system perturbed by  a  tail on the right.} Let us consider the global renormalized time 
\be
\label{sharpintro}
 \frac {ds}{dt} = \frac 1 {\l^3}. \ee
Then, explicit computations similar to the ones in \cite{MMR1} yield to leading order (neglecting $\e$ and higher order terms in $(b,\l,x)$) the set of coupled modulation equations in the setting of the decomposition \fref{uu}:
  \be\label{dynsyst}
 \quad \lsl+b=0,\quad x_s = \l,\quad \frac d{ds} \left( \frac b{\l^2} + \frac 4 {\int Q} c_0   \l^{-\frac 32} x^{-\theta} \right)=0.
 \ee
 This system is to be compared to the unperturbed one obtained in \cite{MMR1}, for $u_0\in \mathcal{A}$ (without tail):
 \be\label{dynsystbis}
 \frac {ds}{dt} = \frac 1 {\l^3}, \quad 
 \lsl+b=0,\quad x_s = \l,\quad \frac d{ds}   \left(\frac b{\l^2}\right)  =0,\ee
which leads to the universal blow up regime $$\frac b{\l^2} = \ell_0, \ \ \l(t) = \ell_0 (T-t)\ \ \mbox{for some}\ \ \ell_0>0.$$ We now integrate explicitly \fref{dynsyst} and fit the parameters of the tail $(c_0,\theta)$ to obtain the blow up regimes described in Theorem \ref{th:1}. Integrating  in $s$, we find
$$
 \frac b{\l^2} + \frac 4 {\int Q} c_0   \l^{-\frac 32} x^{-\theta} =\ell_0,
$$
where $\ell_0$ is a constant. We focus on the threshold regime $\ell_0=0$ leading to:
 \be
 \lsl+b=0,\quad x_s = \l,\quad 
 \frac b{\l^2} + \frac 4 {\int Q} c_0   \l^{-\frac 32} x^{-\theta}=0,
 \ee
 which can now be integrated as follows:
 $$
 \l^{-\frac 12}\l_s = \frac 4 {\int Q} c_0   \l x^{-\theta}
 = \frac 4 {\int Q} c_0 x_s x^{-\theta}
 $$
 or equivalently after integration:
 $$
   \l^{\frac 12}(s) + \frac 2 {\int Q}   \frac {1}{\theta-1}c_0 x^{-\theta+1}(s) = \ell_1.
 $$
 We focus again on the threshold regime $\ell_1=0$ leading to:
 $$
   \l^{\frac 12}(s) = -\frac 2 {\int Q}   \frac {1}{\theta-1}c_0 x^{-\theta+1}(s).
 $$ 
We see that $c_0<0$ is necessary at this point and
$$
x_s(s) = \l(s) = \left(\frac 2 {\int Q}   \frac {1}{\theta-1}c_0\right)^2 x^{-2 \theta+2}(s).
$$
By integration on $[s_0,s]$, choosing $ x^{2\theta-1}(s_0)=(2\theta- 1)\left(\frac 2 {\int Q}   \frac {1}{\theta-1}c_0\right)^2 s_0$, we obtain 
$$
x^{2\theta-1}(s) =  (2\theta- 1)\left(\frac 2 {\int Q}   \frac {1}{\theta-1}c_0\right)^2 s,
$$
and thus
$$
\l(s) =  (2\theta- 1)^{\frac {-2\theta+2}{2\theta-1}} \left(\frac 2 {\int Q}   \frac {1}{\theta-1} c_0\right)^{\frac {2}{2 \theta-1}}  s^{- \frac {2(\theta-1)}{2\theta-1}}.
$$
Set
$$
\beta =   \frac { 2(\theta-1)} {2\theta-1},\quad
\theta = \frac {1-\frac \beta2}{1-\beta}, \quad c_0 =-\frac { \int Q}2(\theta-1) (2\theta-1)^{\theta-1} ,$$
so that 
\be\label{eq:blx}
\l(s) = s^{-\beta}, \quad 
x(s) = \frac 1{1-\beta} s^{1-\beta},\quad
b(s) =\frac {\beta}s.
\ee
Of course, one can check directly that \eqref{eq:blx} are solutions of the system \eqref{dynsyst}
but the above computation reveals the two instability directions
\be
\label{unstabomodesintro}
\ell_0=\ell_1= 0,
\ee
and justifies the use of a topological argument to construct the solution. 
 
 \medskip
 
 \noindent (iii) \emph{Control of the remainder term.}  We now aim at constructing an exact solution which  corresponds to   control  the remainder term $\e(t,x)$. Note that we may now choose $\e_0(x)$ to be {\it well localized on the right}, and we therefore adapt the machinery developed in \cite{MMR1} to construct a mixed energy/Virial functional $$\matchal F\sim \int\psi\e_y^2+\varphi\e^2-\frac 13  \psi\left[(Q_b+\e)^6-Q_b^6-6Q_b^5\e\right]$$ for well chosen cut off functions $(\psi,\varphi)$ which are exponentially decaying to the left, and polynomially growing to the right. Roughly speaking, in the above regime \eqref{eq:blx}, this functional enjoys two fundamental properties:
\\
-- Coercivity :
 $$\matchal F \gtrsim \|\e\|_{H^1_{\rm loc}}^2.$$
 -- Lyapounov monotonicity :  
\be
\label{cnkneneoe}
\frac{d}{ds}\left\{s^j {\mathcal F}\right\}+ s^j \|\e\|_{H^1_{\rm loc}}^2
\lesssim s^{j-4}, \ \ j\geq 0.
\ee
Time integration of the monotonicity formula \fref{cnkneneoe} in the regime dictated by \fref{eq:blx} yields  sufficient uniform estimates on $\e$. Therefore, it only remains to adjust the initial parameters $(b(s_0),\l(s_0))$ in order to asymptotically satisfy the unstable conditions \fref{unstabomodesintro}. This is achieved using a simple topological argument, as in \cite{MMC} but in a blow up setting (see also \cite{RH}, \cite{S}, \cite{MRR}, \cite{CoteZaag}).
 
\medskip

\noindent (iv) \emph{Conclusion of the proof returning to the original time variable.} The above strategy is implemented for all $0<\beta<\frac{11}{20}$. Now we show how the behavior of the parameters  \fref{eq:blx} (see the precise estimates in \eqref{sharp}) in renormalized time leads to the  scenarios of Theorem \ref{th:1} in the original time $t$  (after possible scaling and time translation to adjust constants).

\smallskip

\noindent{-- Blow up in finite time:}
for 
$
\frac 13 < \beta < \frac {11}{20}.
$
From \eqref{sharpintro}, \fref{eq:blx}: $$\int_{s_0}^{+\infty}    {\l^3(s)} ds=T<+\infty$$ and the 
solution $u(t)$ blows up in finite time $T$.
Moreover,
$$  T-t=\int_{s(t)}^{+\infty}{\l^3(s')}{ds'} 
\sim \frac {s^{ -( 3 \beta-1)}}{3\beta-1}  , \quad \l(t) \sim  \left[ (3\beta-1)(T-t)\right]^{\frac {\beta}{3\beta-1}}, 
$$
which implies  $\|u_x(t)\|_{L^2} \sim \|Q'\|_{L^2} \l^{-1} (t) \sim C (T-t)^{-\nu}$ for any $\nu\in (\frac {11}{13},+\infty).$ 

\medskip

\noindent{-- Grow up in infinite time:}  for $ 
\beta = \frac 13,
$ 
the solution $u(t)$ is global in time since $\int_{s_0}^{+\infty}  {\l^3(s)} ds =+\infty$.
Moreover, for some $c_0$ and some $c_1 >0$,
$$
t=\int_{s_0}^{s(t)}{\l^3(s')}{ds'} 
=  \log s + c_0 +O(s^{-\frac 1{10}}),\quad  s\sim  c_1 e^t  ,\quad
 \l(t)\sim  c_1^{-\frac 13} e^{-\frac t 3}  .
$$
This means grow up in infinite time for $u(t)$ with exponential growth. Scaling and time translation   lead to any exponential rate $e^{-ct}$, $c>0$. Finally, for $0<\beta< \frac 13,$
we also obtain a global solution $u(t)$  since
$$\int_{s_0}^{+\infty} \l^3(s) ds \geq 2^{-3} \int_{s_0}^{+\infty} s^{-3 \beta} ds = +\infty,$$
and 
$$t= \int_{s_0}^{t(s)} \l^3(s') ds' \sim 
\frac 1{1-3 \beta} s^{1-3 \beta}  ,
\quad  \l(t) \sim  (1-3 \beta)^{\frac \beta{1-3 \beta}} t^{\frac \beta{1-3 \beta}}  ,
$$
which means   grow up rates $t^\nu$ at $+\infty$, for any $\nu=\frac {\beta}{1-3\beta}>0$.


\section{Decomposition of the solution}


This section is devoted to the study of the geometrical decomposition \fref{uu}, and in particular the derivation of the modulation equations.


\subsection{Inversion of $L$ and $Q_b$ profiles}
 

Let the functional space
       ${\mathcal Y}$ be the set of functions $f\in \mathcal C^\infty({{\mathbb R}},{{\mathbb R}})$ such that
\begin{equation}\label{Y}
    \forall k\in \mathbb{N},\ \exists C_k,\, r_k>0,\ \forall y\in {{\mathbb R}},\quad |f^{(k)}(y)|\leq C_k (1+|y|)^{r_k} {e^{-|y|}},
\end{equation}
and $L$ be the linearized operator close to $Q$ given by \fref{deflplus}. We claim:

\begin{lemma}[Invertibility of $L$]
\label{lemmainvert}
{\rm (i)} There exists a unique $\Yz\in \mathcal Y$, even, such that
\be \label{YQ}
L \Yz= 5  Q^4 ,\quad
(Q,\Yz)=-\frac 34 \int Q  .
\ee
{\rm (ii)} There exists a unique function $P$ such that $P' \in \mathcal{Y}$ and
\be
\label{eq:23}
 (L  P)'={\Lambda} Q, \ \  \lim_{y \to -\infty}   P(y) = \frac 12 \int Q,  \ \ \lim_{y \to +\infty}   P(y) =0,
 \ee
 \be
 \label{PQ}
 (P, Q) = \frac 1{16} \left(\int Q\right)^2 > 0,\quad (P,  Q')=0.
 \ee
\end{lemma}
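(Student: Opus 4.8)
The plan is to construct $\Yz$ and $P$ essentially by hand, using the explicit kernel structure of the linearized operator $L$ and the ODE theory for second-order linear equations, then check the orthogonality/normalization identities by direct integration against $Q$.

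\textbf{Step 1: Invertibility of $L$ and the function $\Yz$.} Recall that $L = -\partial_y^2 + 1 - 5Q^4$ is self-adjoint on $L^2$, with $\ker L = \mathrm{span}\{Q'\}$ (this is classical for the gKdV ground state, coming from differentiating $Q'' + Q^5 = Q$). Since $Q'$ is odd, on the subspace of even functions $L$ is invertible. The source term $5Q^4 \in \mathcal Y$ is even, so I would solve $L\Yz = 5Q^4$ by the variation-of-constants formula built from two independent solutions of $L\phi = 0$: namely $Q'$ (which decays like $e^{-|y|}$) and a second solution $\Gamma$ with $\Gamma \sim e^{|y|}$ at $\pm\infty$. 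The particular solution obtained this way, once one selects the unique even combination, decays exponentially because the growing mode is killed by the evenness/orthogonality bookkeeping and by the exponential decay of $5Q^4$; the polynomial prefactors in the definition of $\mathcal Y$ appear from the convolution with the exponentials. Uniqueness of the even solution is immediate from $\ker L \cap \{\text{even}\} = \{0\}$. Then $(Q,\Yz) = -\frac34 \int Q$ is not a free normalization — it must be \emph{computed}. Using $L\Yz = 5Q^4$ and self-adjointness, $(Q,\Yz) = (Q, L^{-1}(5Q^4))$; but more usefully one notes $L(\Lambda Q) = -2Q$ (a standard identity obtained by applying $\Lambda$ to $Q''+Q^5=Q$), hence one can trade $Q$ for $-\frac12 L(\Lambda Q)$ and get $(Q,\Yz) = -\frac12(L\Lambda Q, \Yz) = -\frac12(\Lambda Q, 5Q^4) = -\frac52(\Lambda Q, Q^4)$. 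Then $(\Lambda Q, Q^4) = \int(\frac12 Q + yQ')Q^4 = \frac12\int Q^5 + \frac15\int y (Q^5)' = \frac12\int Q^5 - \frac15\int Q^5 = \frac{3}{10}\int Q^5$, and finally $\int Q^5 = \int Q(Q'' - Q) \cdot(-1)\cdots$ — more directly $\int Q^5 = \int Q \cdot Q^4$ and from $Q''+Q^5 = Q$ one has $\int Q^5 = \int Q - \int Q'' = \int Q$. This gives $(Q,\Yz) = -\frac52 \cdot \frac{3}{10}\int Q = -\frac34\int Q$, as claimed.

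\textbf{Step 2: The function $P$.} Here one wants $(LP)' = \Lambda Q$, i.e. $LP = \Psi$ where $\Psi$ is a primitive of $\Lambda Q$; the boundary conditions on $P$ at $\pm\infty$ fix the constant of integration and select the correct solution. Since $\int \Lambda Q = \frac12\int Q + \int yQ' = \frac12\int Q - \int Q = -\frac12\int Q \neq 0$, the natural primitive $\Psi(y) = \int_{-\infty}^y \Lambda Q$ tends to $0$ at $-\infty$ and to $-\frac12\int Q$ at $+\infty$; crucially $\Psi \notin L^2$, which is why $P$ itself does not decay but has nonzero limits. I would invert $L$ on $\Psi$ again by variation of constants against $\{Q', \Gamma\}$, choosing the homogeneous part so that $P' \in \mathcal Y$ (exponential decay of the derivative), and then the constant so that $\lim_{-\infty} P = \frac12\int Q$, $\lim_{+\infty}P = 0$. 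One must check these two limits are consistent with the equation: asymptotically $L \approx -\partial_y^2 + 1$, so $LP \approx P$ far out, and indeed $P(-\infty) = \frac12\int Q = -\Psi(+\infty)$... one tracks signs carefully here — the point is that the two prescribed limits and the single free constant are compatible precisely because of the value of $\int \Lambda Q$. Uniqueness follows since two such $P$ differ by an element of $\ker L$ with decaying derivative and matching limits, forcing the difference to be $0$ (it would have to be a multiple of $Q'$, which is odd and decaying, incompatible with the asymmetric limits unless the multiple is zero, and then a constant, killed by the limit conditions).

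\textbf{Step 3: The identities $(P,Q) = \frac1{16}(\int Q)^2$ and $(P,Q') = 0$.} For the second one: integrate by parts, $(P,Q') = -\int P' Q + [PQ]_{-\infty}^{+\infty} = -\int P'Q + 0$ since $Q$ decays; then I would show $\int P'Q = 0$ using $(LP)' = \Lambda Q$ integrated against a primitive, or by a parity-type argument — actually the cleanest route is $(P, Q') = (P, -L\Lambda Q / 2)\cdot$... hmm, rather use $Q' $ satisfies $LQ' = 0$? No, $L Q' = 0$ is false in general — wait, $Q'$ spans $\ker L$, so indeed $LQ' = 0$. Then $(P, Q') = (P, 0)$? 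That's circular. Better: $(LP, Q') = (P, LQ') = 0$, and $(LP,Q') = -\int (LP)' Q + \text{bdry}$; but also need $(LP)' = \Lambda Q$, giving $\int \Lambda Q \cdot Q^{(0)}$-type... The efficient path: from $(LP)' = \Lambda Q$, $(LP, Q') = -\int (LP) Q'' \cdots$; I'll instead just verify $(P,Q')=0$ by noting $\frac{d}{dy}(P Q')$ integrates to zero. For $(P,Q)$: use $Q = -\frac12 L\Lambda Q$ again, so $(P,Q) = -\frac12(P, L\Lambda Q) = -\frac12(LP, \Lambda Q)$, then integrate by parts moving a derivative: $(LP, \Lambda Q) = -\int (LP)' \cdot (\text{primitive adjustments})$ — since $(LP)' = \Lambda Q$ one gets $(LP,\Lambda Q)$ expressed via $\int (\text{primitive of }\Lambda Q)\cdot \Lambda Q$-type terms, ultimately reducing to $-\frac18(\int Q)^2$ after using $\int Q^5 = \int Q$ and explicit one-dimensional integrals. \emph{The main obstacle} I anticipate is exactly this last bookkeeping: carefully tracking the boundary terms at $\pm\infty$ (which do not vanish because $P$ and $\Psi$ have nonzero limits) and correctly reducing everything to the single ``master integral'' $\int Q$ via the Pohozaev-type relations $\int Q^5 = \int Q$, $(\Lambda Q, Q) = 0$, $(\Lambda Q, \Gamma)\ldots$ — one wrong sign or dropped boundary term throws off the constants $-\frac34$, $\frac1{16}$. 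Everything else (existence, uniqueness, membership in $\mathcal Y$) is routine ODE theory once the kernel of $L$ and the exponential asymptotics of its homogeneous solutions are in hand.
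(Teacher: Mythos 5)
Part (i) of your proposal is correct and is essentially the paper's argument: the paper also dismisses existence/uniqueness of $\Yz$ as standard (invertibility of $L$ on even functions, citing \cite{MMR1}) and computes $(Q,\Yz)$ exactly as you do, via $L\Lambda Q=-2Q$, self-adjointness, $(\Lambda Q,5Q^4)=\tfrac32\int Q^5$ and $\int Q^5=\int Q$. For part (ii) the paper gives no proof at all (it cites \cite{MMR1}, Proposition 2.2), so your attempt to reconstruct it is welcome, but it contains two genuine gaps.

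First, your uniqueness argument for $P$ is wrong. If $P_1,P_2$ both satisfy \eqref{eq:23} with $P_i'\in\mathcal Y$, then $R=P_1-P_2$ satisfies $LR=\mathrm{const}$, the limits force the constant to vanish, and boundedness forces $R=cQ'$ — so far so good. But you then claim $cQ'$ is ``incompatible with the asymmetric limits unless $c=0$'': this is false, since $Q'\to 0$ at \emph{both} ends, so $P+cQ'$ satisfies every condition in \eqref{eq:23} for every $c$, and also has $P'+cQ''\in\mathcal Y$. The conditions \eqref{eq:23} alone do \emph{not} determine $P$; the normalization $(P,Q')=0$ in \eqref{PQ} is precisely what removes this one-parameter freedom and must be read as part of the defining data (or explicitly imposed by subtracting $\frac{(P,Q')}{\|Q'\|_{L^2}^2}Q'$). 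Second, and relatedly, you never actually establish $(P,Q')=0$: your attempts in Step~3 are, as you yourself note, circular (using $LQ'=0$ gives $0=0$), and ``$\frac{d}{dy}(PQ')$ integrates to zero'' is true for any bounded $P$ and proves nothing. Once $(P,Q')=0$ is taken as the normalization, there is nothing to prove there, but then your write-up must say so rather than attempt a derivation. By contrast, your sketch for $(P,Q)$ is salvageable: from $LP(y)=-\int_y^{+\infty}\Lambda Q$ (the constant being fixed by $P(\pm\infty)$) one gets $(LP,\Lambda Q)=\int FF'=-\tfrac12\big(\int\Lambda Q\big)^2=-\tfrac18\big(\int Q\big)^2$ with $F(y)=\int_y^{+\infty}\Lambda Q$, hence $(P,Q)=-\tfrac12(LP,\Lambda Q)=\tfrac1{16}\big(\int Q\big)^2$; you should write this out instead of deferring it to ``bookkeeping''.
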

\begin{proof}
Note that the existence and uniqueness of $\Yz$ follows readily from standard properties of the operator $L$ (see e.g. \cite{MMR1}).
Moreover,
$$
(Q,\Yz)= -\frac 12 (L\Lambda Q,\Yz) =-\frac 12(\Lambda Q,5Q^4 )
 =-\frac 34 \int Q^5=-\frac 34 \int Q.
$$
Part (ii)   is taken from \cite{MMR1}, Proposition 2.2.
\end{proof}

A simple consequence of Lemma \ref{lemmainvert} (ii) is the existence of a one parameter family of approximate self similar profiles $b\mapsto Q_b$, $|b|\ll 1$, which provide the leading order deformation of the ground state profile $Q=Q_{b=0}$ in the blow up regimes.  More precisely, let $\chi\in {\cal C}^{\infty}({\mathbb R})$ be such that $0\leq \chi \leq 1$, $\chi'\geq 0$ on $\mathbb{R}$,
$\chi\equiv 1$ on $[-1,+\infty)$, $\chi\equiv 0$ on $(-\infty,-2]$, and define
\begin{equation}\label{defgamma} 
\chi_b(y)= \chi\left(|b|^{\gamma} {y} \right),\ \ \gamma = \frac 34.
\end{equation}
The following Lemma is proved in \cite{MMR1}:

\begin{lemma}[Approximate self-similar profiles $Q_b$, \cite{MMR1}]
\label{cl:2}
Let 
\begin{equation}\label{eq:210}
\quad 
Q_b(y) = Q(y) + b \chi_b(y)   P (y).
\end{equation}
Then:\\
{\rm (i) Estimates on $Q_b$:} For all $y \in \mathbb{R}$,
\begin{align}
&	|Q_b(y)|\lesssim  e^{-|y|} + |b| \left(  {\mathbf{1}}_{[-2,0]}(|b|^\gamma y) +  e^{-\frac {|y|}{2}} \right),
\label{eq:001}\\
&	|Q_b^{(k)}(y)|\lesssim  e^{-|y|} +   |b|  e^{-\frac {|y|}{2}} +|b|^{1+k \gamma} {\mathbf{1}}_{[-2,-1]}(|b|^\gamma y),\quad  \text{for $k\geq 1$}.
\label{eq:002}
\end{align}
\noindent
{\rm (ii) Equation of $Q_b$:} let
\begin{equation}\label{eq:201}
-\Psi_b=\left(Q_b''- Q_b+ Q_b^5\right)'+b {\Lambda} Q_b,
\end{equation}
then, for all $y\in \mathbb{R}$,
\begin{align}
& |\Psi_b(y)|\lesssim |b|^{1+ \gamma}  {\mathbf{1}}_{[-2 ,-1]}(|b|^{\gamma}y) 
+  b^2  \left(e^{-\frac {|y|} 2}+ {\mathbf{1}}_{[- 2,0]}(|b|^{\gamma}y) \right),\label{eq:202}
\\
& |\Psi_b^{(k)}(y)|\lesssim    |b|^{1+ (k+1) \gamma}  {\mathbf{1}}_{[-2 ,-1]}(|b|^{\gamma}y)  + b^2   e^{-\frac {|y|} 2} ,\quad 
 \text{for $k\geq 1$}. \label{eq:203}
\end{align}
{\rm (iii) Mass and energy properties of $Q_b$:}
\begin{align}
& \left|\int Q_b^2 - \left( \int Q^2 + 2b \int PQ \right)\right| \lesssim |b|^{2-\gamma}, \label{eq:204}\\
& \left| E(Q_b) + b \int PQ\right|\lesssim b^2 .
\label{eq:205}
\end{align}
\end{lemma}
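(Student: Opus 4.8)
The plan is to substitute the explicit ansatz $Q_b = Q + b\chi_b P$ from \eqref{eq:210} into each of the three formulas and estimate term by term. The only inputs needed are: the soliton equation $Q'' - Q + Q^5 = 0$; the identity $(LP)' = \Lambda Q$ from Lemma \ref{lemmainvert}; the exponential decay of $Q$ and all its derivatives; the fact that $P' \in \mathcal Y$, so that $P', P'', \dots$ decay exponentially while $P$ itself stays bounded, with $P(y) \to \frac12\int Q$ as $y \to -\infty$ and $P(y) \to 0$ as $y\to +\infty$; and the localization \eqref{defgamma} of the cutoff, i.e. $|\chi_b^{(j)}| \lesssim |b|^{j\gamma}$ with support in $\{|b|^\gamma y \in [-2,-1]\}$ for $j \ge 1$, while $1 - \chi_b$ is supported in $\{|y| \ge |b|^{-\gamma}\}$, a set on which $Q$, $Q'$, $\dots$, $P'$, $P''$, $\dots$ are all $O(e^{-|b|^{-\gamma}}) = O(|b|^N)$ for every $N$.

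For part (i) I expand $Q_b^{(k)} = Q^{(k)} + b\sum_{j=0}^{k}\binom{k}{j}\chi_b^{(j)}P^{(k-j)}$: the $j=0$ term is $\lesssim |b|e^{-|y|/2}$ when $k\ge1$, and for $k=0$ produces a bounded piece on $\{|b|^\gamma y\in[-2,0]\}$ plus an $e^{-|y|}$ tail; for $1\le j\le k$ only $j=k$ is non-negligible, giving $b\chi_b^{(k)}P \lesssim |b|^{1+k\gamma}{\mathbf{1}}_{[-2,-1]}(|b|^\gamma y)$ since $P = O(1)$ there while $P^{(k-j)}$ is super-exponentially small for $k-j\ge1$. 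This gives \eqref{eq:001}--\eqref{eq:002}. For part (iii), $\int Q_b^2 = \int Q^2 + 2b\int\chi_b PQ + b^2\int\chi_b^2 P^2$; replacing $\chi_b$ by $1$ in the linear term costs $O(|b|^N)$ because $Q$ decays, and $b^2\int\chi_b^2 P^2 \lesssim b^2\cdot|b|^{-\gamma} = |b|^{2-\gamma}$ because $P=O(1)$ over an interval of length $\sim|b|^{-\gamma}$ and decays exponentially for $y>0$, which is \eqref{eq:204}. For \eqref{eq:205}, expanding $\int(Q_b')^2$ and $\int Q_b^6$ the same way, reducing all $\chi_b$'s to $1$ modulo $O(|b|^N)$ and all quadratic-and-higher terms to $O(b^2)$, yields $E(Q_b) = E(Q) + b\left(\int Q'P' - \int Q^5 P\right) + O(b^2)$; then $E(Q)=0$ by the Pohozaev identities for $Q'' + Q^5 = Q$ (equivalently $\int(Q')^2 = \frac12\int Q^2$ and $\int Q^6 = \frac32\int Q^2$), and an integration by parts gives $\int Q'P' - \int Q^5 P = -\int(Q'' + Q^5)P = -\int QP$, hence $E(Q_b) + b\int PQ = O(b^2)$.

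The heart of the proof is part (ii). Writing $G(w) = w'' - w + w^5$, so that $G(Q)=0$, and using $(\chi_b P)'' - \chi_b P + 5Q^4\chi_b P = \chi_b'' P + 2\chi_b' P' - \chi_b LP$, one gets $G(Q_b) = b(\chi_b'' P + 2\chi_b' P' - \chi_b LP) + \sum_{j=2}^5\binom{5}{j}Q^{5-j}(b\chi_b P)^j$. Differentiating, invoking the crucial cancellation $(\chi_b LP)' = \chi_b' LP + \chi_b(LP)' = \chi_b' LP + \chi_b\Lambda Q$, and adding $b\Lambda Q_b = b\Lambda Q + b^2\Lambda(\chi_b P)$, the terms $-b\chi_b\Lambda Q$ and $b\Lambda Q$ combine into the super-small $b(1-\chi_b)\Lambda Q$, leaving, by \eqref{eq:201},
\begin{multline*}
-\Psi_b = -b\,\chi_b' LP + b\,(\chi_b'' P + 2\chi_b' P')' + b(1-\chi_b)\Lambda Q \\
+ b^2\Lambda(\chi_b P) + \Big(\sum_{j=2}^5\binom{5}{j}Q^{5-j}(b\chi_b P)^j\Big)'.
\end{multline*}
Here $-b\chi_b' LP$ is the leading term, $\lesssim |b|^{1+\gamma}{\mathbf{1}}_{[-2,-1]}(|b|^\gamma y)$, since $LP \to \frac12\int Q$ while $P''$ and $Q^4$ are negligible on that interval; $b(\chi_b'' P + 2\chi_b' P')'$ is $O(|b|^{1+3\gamma})$ on the same interval up to super-small terms; $b(1-\chi_b)\Lambda Q = O(|b|^N)$; the differentiated nonlinear terms are $\lesssim b^2 e^{-|y|/2}$ because of the $Q^{5-j}$, $j\ge2$, prefactors; and $b^2\Lambda(\chi_b P) = b^2(\frac12\chi_b P + y\chi_b' P + y\chi_b P')$ contributes $\lesssim b^2(e^{-|y|/2} + {\mathbf{1}}_{[-2,0]}(|b|^\gamma y))$ using $|yP'| \lesssim e^{-|y|/2}$ and $|P| \lesssim 1$. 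Taking $k\ge1$ further derivatives only degrades the cutoff powers to $|b|^{1+(k+1)\gamma}$, which proves \eqref{eq:202}--\eqref{eq:203}. The single delicate point throughout --- and the only place real care is needed --- is that $P$ does not decay at $-\infty$: every bare factor $P$ (as opposed to $P'$, $P''$) must be paired either with something supported in $\{|y|\gtrsim|b|^{-\gamma}\}$, hence enjoying the gain $e^{-|b|^{-\gamma}}$, or with an exponential weight $Q$ or $Q^{5-j}$, or else be confined by $\chi_b$ to an interval of length $\sim |b|^{-\gamma}$, where it produces only the advertised losses $|b|^{1+\gamma}$ and $|b|^{2-\gamma}$; the rest is bookkeeping.
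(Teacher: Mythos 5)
Your direct verification is correct, and it is essentially the argument of \cite{MMR1} (the present paper does not reprove this lemma but simply cites \cite{MMR1} for it): expand the ansatz, use $G(Q)=0$, the commutator identity $(\chi_bP)''-\chi_bP+5Q^4\chi_bP=\chi_b''P+2\chi_b'P'-\chi_bLP$ together with $(LP)'=\Lambda Q$ to produce the cancellation $b(1-\chi_b)\Lambda Q$, and track where the non-decaying factor $P$ lands. The only cosmetic imprecision is attributing the bound on the differentiated nonlinear terms entirely to the $Q^{5-j}$ prefactors, which is vacuous for $j=5$; there the derivative falls on $\chi_b'P$ (giving an admissible $|b|^{5+\gamma}{\mathbf{1}}_{[-2,-1]}(|b|^\gamma y)$ contribution) or on $P'$ (giving $|b|^5e^{-|y|/2}$), so the stated estimates still hold.
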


\subsection{Definition of the tail on the right} We now introduce the slowly decaying tail on the right. Let $c_0<0$,  $x_0\gg 1$, $\theta>1$  and let
$f_0$ be a smooth function such that 
 \be\label{deffz}
f_0(x)=\left\{\begin{array}{ll}c_0 x^{-\theta} & \hbox{ for $x>\frac {x_0}2$},\\ 0 & \hbox{ for $x<\frac {x_0}4$,}\end{array}\right.
\ee
and 
\be\label{estfz}
\left|\frac {d^{k}f_0}{dx^k} (x)\right| \lesssim   |x|^{-\theta - k}, \ \ \forall (x,k)\in \RR\times \NN.
\ee
Let $\qz$ be the solution of 
\be\label{defqz}
\partial_t \qz + \partial_x(\partial_x^2 \qz + \qz^5)=0,\quad
\qz(0,x)= f_0(x).
\ee
A simple consequence of local energy estimates for (gKdV) is the propagation of the tail on the right:

\begin{lemma}[Asymptotic behavior of $\qz$]\label{le:qz}
The solution $\qz$ of \eqref{defqz} is global, smooth and bounded  in $H^1$.
Moreover, $\forall t\geq 0,\  \forall x>\frac t 2+\frac {x_0}2$,
\be\label{leqz3}
\forall k\geq 0,\quad
|\partial_x^k \qz(t,x) -f_0^{(k)}(x) | \lesssim   t^{\frac 38} x^{-\theta-\frac {19}8 - k }
\lesssim x^{-\theta-2 - k }, 
\ee
\be\label{leqz4}
|\partial_t\qz(t,x) | \lesssim  x^{-\theta-3}
.
\ee
\end{lemma}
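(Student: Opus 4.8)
\textbf{Proof plan for Lemma \ref{le:qz}.}

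The plan is to treat $\qz$ as a perturbation of the linear Airy flow applied to $f_0$, and to exploit the fact that $f_0$ lives entirely to the right (support in $\{x>x_0/4\}$), so that on the region $x>\frac t2+\frac{x_0}2$ the solution is far to the right of where any mass originating near $x=x_0/4$ can propagate at the ``critical speed'' of the Airy flow. First I would record global well-posedness: since $f_0\in H^1$ and, in fact, $f_0\in L^2$ with $\|f_0\|_{L^2}$ controllable independently of the tail constants (the tail $c_0x^{-\theta}$ with $\theta>1$ is square-integrable near $+\infty$ and $f_0$ is smooth, compactly-structured near $x_0$), the $L^2$ norm is conserved, and provided we arrange $\|f_0\|_{L^2}<\|Q\|_{L^2}$ (which we may, by taking $x_0$ large and $|c_0|$ moderate, or simply by the smallness built into the construction), the Gagliardo–Nirenberg inequality \eqref{gn} together with energy conservation bounds $\|\partial_x\qz(t)\|_{L^2}$ uniformly in $t$, giving the global, smooth, $H^1$-bounded statement.

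The heart of the matter is the pointwise estimate \eqref{leqz3}–\eqref{leqz4}. I would proceed by a local energy / monotonicity argument in the spirit of Kato's smoothing identity, which is the standard tool in this circle of ideas (Martel–Merle) and is presumably the ``local energy estimates for (gKdV)'' referred to in the text. Concretely, fix $t$ and a point $x_*>\frac t2+\frac{x_0}2$, introduce a smooth cutoff $\phi$ that transitions from $0$ to $1$ across a window to the left of $x_*$, translated backward in time at speed comparable to $1$ (say the weight is $\phi\big(\tfrac{x-x_*+\sigma(t-\tau)}{L}\big)$ for suitable $\sigma\sim\tfrac12$ and window $L\sim x_*$), and compute $\frac{d}{d\tau}\int (\qz - f_0^{\rm Airy})^2\phi\,dx$, where $f_0^{\rm Airy}$ solves the linear Airy equation with data $f_0$. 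The Airy term $f_0^{\rm Airy}$ itself satisfies sharp pointwise decay to the right — $|\partial_x^k f_0^{\rm Airy}(\tau,x)-f_0^{(k)}(x)|\lesssim \tau^{3/8}x^{-\theta-19/8-k}$ — which one gets from the oscillatory-integral representation of the Airy kernel and stationary phase (the exponent $\tfrac38 = \tfrac14\cdot\tfrac32$ and the gain $x^{-19/8}$ reflect the known dispersive decay rate $\langle\cdot\rangle^{-1/4}$ of Airy combined with integration of the tail). The nonlinear correction $w=\qz - f_0^{\rm Airy}$ then solves an equation with source $-\partial_x(\qz^5)$; on the relevant region $\qz$ is itself small (it is $O(x^{-\theta})$ there, plus the uniformly small linear part), so $\qz^5$ is a very high power of a small, decaying quantity, and the monotonicity identity — in which the $\phi'$ terms have a favorable sign because the weight moves to the left — closes to give the claimed bound on $w$ and hence on $\qz$. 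Differentiated versions (the estimates for $\partial_x^k$) follow by applying the same argument to $\partial_x^k\qz$, using that spatial derivatives only improve the decay and that the nonlinear source, being a derivative of a fifth power of small decaying data, remains negligible; \eqref{leqz4} then comes from the equation itself, $\partial_t\qz = -\partial_x^3\qz-\partial_x(\qz^5)$, plugging in the already-established bounds on $\partial_x^3\qz$ and $\qz$.

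The main obstacle I anticipate is the bookkeeping in the local energy estimate: one must choose the cutoff's width and propagation speed so that (a) the ``error'' region where $\phi'\neq 0$ stays to the left of the critical light cone emanating from $\{x\sim x_0\}$ for all $\tau\in[0,t]$, so that the contributions there are exponentially (or at least polynomially, to high order) small; (b) the weight remains compatible with the $t^{3/8}$ time-growth one is willing to tolerate; and (c) the nonlinear term, after integration by parts against $\phi$, is genuinely subcritical relative to the target. A secondary subtlety is obtaining the \emph{sharp} power $x^{-\theta-19/8}$ with the precise $t^{3/8}$ prefactor for the linear Airy evolution — this requires a careful stationary-phase analysis of $\int e^{i(x\xi+\tau\xi^3)}\widehat{f_0}(\xi)\,d\xi$, splitting into the region near the stationary point $\xi\sim -(x/3\tau)^{1/2}$ and the non-stationary region, and using the decay of $\widehat{f_0}$ inherited from \eqref{estfz}. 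Once these two ingredients (sharp linear decay + favorable-sign local energy monotonicity killing the nonlinearity) are in place, \eqref{leqz3} and \eqref{leqz4} follow, and the coarser bounds $\lesssim x^{-\theta-2-k}$ and $\lesssim x^{-\theta-3}$ are immediate since on the region under consideration $t^{3/8}\lesssim x^{3/8}\le x$ (after absorbing into the exponent, using $19/8-3/8 = 2$).
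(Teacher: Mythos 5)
Your overall strategy differs from the paper's in one structural respect and has one genuine gap. The paper does not compare $\qz$ with the linear Airy evolution of $f_0$ at all: it sets $q_1=\qz-f_0$ (the \emph{static} initial profile), so that $q_1$ solves a forced gKdV-type equation with source $F_0=-\partial_x^3 f_0-\partial_x(f_0^5)$, which decays like $x^{-\theta-3}$ by \eqref{estfz}. It then runs weighted mass, energy and higher-order functionals $\int q_1^2\,\varphi_{\theta_1}(x-\tfrac t4-\tfrac{x_0}4)$, etc., where the weight $\varphi_{\theta_1}$ is \emph{polynomially growing} to the right (like $x^{\theta_1}$) and exponentially decaying to the left, translated at speed $1/4$. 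The exponents $t^{3/8}x^{-\theta-19/8}$ are not dispersive exponents: they come from choosing $\theta_1=2\theta+\tfrac{15}{4}$, integrating the source contribution $\int F_0^2\,\varphi_{\theta_1}^2/\varphi_{\theta_1}'\lesssim (t+x_0)^{\theta_1-2\theta-4}$ in time, and then converting the weighted $L^2\times \dot H^1$ control into a pointwise bound via $q_1^2\varphi_{\theta_1+1}\lesssim \int(\partial_x q_1)^2\varphi_{\theta_1+2}+\int q_1^2\varphi_{\theta_1}$. Your proposed stationary-phase analysis of the Airy kernel to produce exactly $t^{3/8}x^{-\theta-19/8}$ is therefore chasing a sharpness that is not there; in fact, in the region $x>\tfrac t2+\tfrac{x_0}2$ the crude Duhamel-in-time bound $|v(t,x)-f_0(x)|\le\int_0^t|\partial_x^3 v(\tau,x)|\,d\tau\sim t\,x^{-\theta-3}$ already beats the stated rate (since $(t/x)^{5/8}\lesssim1$ there), so the oscillatory-integral machinery is both unjustified as written and unnecessary.

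The genuine gap is in how you close the pointwise estimate. Your local energy functional uses a \emph{bounded} cutoff $\phi$ transitioning from $0$ to $1$; such a functional can only deliver an unweighted $L^2$ (or $H^1$) bound on the region to the right of the moving front, and no amount of favorable sign in the $\phi'$ terms will convert that into the pointwise polynomial decay $|w(t,x)|\lesssim x^{-\theta-19/8}$ at a given point $x_*$. To extract a rate in $x$ you must build the target decay into the weight itself, i.e.\ use weights growing like $x^{\theta_1}$ with $\theta_1\approx 2(\theta+\tfrac{19}{8})-1$ on the right, prove the monotonicity for mass \emph{and} for the (weighted) energy so as to control $\int(\partial_x q_1)^2\varphi_{\theta_1+2}$, and then interpolate to $L^\infty$; this also forces you to check that the weighted nonlinear terms (e.g.\ $\int q_1^6\varphi_{\theta_1+2}$) are absorbed, which is where the smallness of $\|f_0\|_{L^2}$ for $x_0$ large actually enters. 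Your treatment of \eqref{leqz4} from the equation, and the induction on $k$ for higher derivatives, are in the right spirit and match the paper's; but as written the core step producing \eqref{leqz3} does not close.
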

See proof of Lemma \ref{le:qz} in Appendix A.

\subsection{Decomposition of the solution}

Let $c_0\in \RR$, $\l_0\ll 1$ and $x_0\gg 1$. Consider $u(t,x)$ a solution of \eqref{kdv} and set
\be\label{defw}
w(t,x)= u(t,x) - \qz(t,x).
\ee
We assume that $w$ is close to $Q$ in the following sense : there exist $(\lambda_1(t),x_1(t))\in \RR^*_+\times \RR$ and $\e_1(t)$ such that
\begin{align}\label{decompo0}
\forall t\in [0,t_0], \ \ &
\l_1(t)<\frac {10}9 \l_0,\quad x_1(t)> \frac 9{10} x_0,
\\  \label{decompo1}
& w(t,x)=\frac{1}{\lambda^{\frac 12}_1(t)}(Q+\e_1)\left(t,\frac{x-x_1(t)}{\lambda_1(t)}\right)
\end{align}
with 
\be
\label{hypeprochien}
\forall t\in [0,t_0],\ \
 \|\e_1(t)\|_{L^2}+\left(\int (\partial_y \e_1)^2 e^{-\frac {|y|}{2}} dy \right)^{\frac 12}\leq \alpha^*
\ee
for some small enough universal constant $\alpha^*>0$. We collect in the following Proposition the standard preliminary estimates on this decomposition, and derive in particular the set of modulation equations as a consequence of a suitable choice of orthogonality conditions for the remainder term.

  \begin{proposition}[Preliminary estimates and modulation equations]
\label{le:2}
Assume \fref{decompo0}--\fref{hypeprochien} for $\alpha^*$ small enough, and assume $x_0$ large enough and $\l_0$ small enough.

\noindent{\em (i) Decomposition:} There exist $\mathcal C^1$ functions $({\lambda}, x,{b}):[0,t_0]\to (0,+\infty)\times \mathbb{R}^2$ such that
\begin{equation}\label{defofeps}
\forall t\in [0,t_0],\quad
{\lambda}^{\frac 12}(t) w(t,{\lambda}(t) y + x(t)) =  Q_{{b}(t)}(y)
+\pp(t) \Yz(y) +\e(t,y),
\end{equation}
where $Y_0$ is given by \fref{YQ}, 
\be\label{defp}
\pp(t)= \qnz(t,0),\quad 
\qnz(t,y) = \l^{\frac 12}(t) \qz(t,\l(t) y +x(t)),
\ee
and   $\e(t,y)$
satisfies  
\be
\label{ortho1}
(\e(t), y {\Lambda}   Q )=(\e(t),\Lambda Q) =(\e(t),Q)=0,
\ee
\be\label{lambdax}
\l(t)<\frac 54 \l_0,\quad x(t)>\frac 45 x_0.
\ee
{\em (ii)   Estimates induced by the conservation laws:}
\be
\label{twobound}
  \|\e(s)\|^2_{L^2}\lesssim \left|\int u_0^2-\int Q^2\right| + |b(s)| +|p(s)| + x_0^{-\theta+\frac 12},
\ee
\be
\label{energbound}
  \frac 1{\l^2}\| \e_y(s)\|_{L^2}^2       \lesssim |E(u_0)| + \left|  \frac b{\l^2}  {+} c_0 \frac 4{\int Q}\l^{-\frac 32} x^{-\theta} \right|  
+ \frac {b^2}{\l^2}+  
 \frac { |p| }{x^2} + \frac { |p| }{\l x}  + \frac {p^2}{\l^2}  +  x_0^{-\theta-\frac 12} 
 .
   \ee
 
\noindent {\em (iii) Modulation equations:} 
Assume
\be\label{surX}
\forall t\in [0,t_0],\quad x(t) > \frac 23 t + \frac 23 {x_0}.
\ee
Let $s_0>1$ and consider the rescaled time 
\be
\label{rescaledtime}
{s}=s(t)=  s_0+ \int_{0}^{t} \frac {dt'}{{\lambda}^3(t')} \quad 
\text{or equivalently}\quad
\frac {d{s}}{dt} = \frac 1{{\lambda}^3},\quad s(0)=s_0.
\ee 
Then, on $[s_0,s(t_0)]$,
\begin{align}
& \left|\frac {{\lambda}_{s}}{\lambda} + {b}\right|+\left| \frac { x_{s}}{\lambda} - 1 \right|  
  \lesssim  
\left(\int \varepsilon^2 {e^{-\frac{|y|}{10}}} \right)^{\frac 12} +  b^2
  +p^2
 + \frac \l x |p|,\label{eq:2002bis} \\
& |{b}_{s}|  \lesssim
 \int  \varepsilon^2   {e^{-\frac{|y|}{10}}}+|p| \left( \int \varepsilon^2 {e^{-\frac{|y|}{10}}} \right)^{\frac 12}+|b|^2+  |b| |p| +p^4
 + \frac \l x |p| ,\label{eq:2003}\\
 & \left| \frac d{ds} \left(\frac b{\l^2} + \frac 4{\left(\int Q\right)}c_0     \l^{-\frac 32}  x^{-\theta} \right)   \right|\label{fond} \\ & \lesssim  \frac 1{\l^2} 
\left(  |b|^3+|p|^3 +   (|b|+|p|)\left(\int \varepsilon^2 {e^{-\frac{|y|}{10}}} \right)^{\frac 12}
+ \int \varepsilon^2 {e^{-\frac{|y|}{10}}} +  \frac {\l^2}{x^2} |p| +  \frac {\l}{x^3} |p|   \right).
\nonumber
\end{align}
\end{proposition}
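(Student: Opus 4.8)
The plan is to prove Proposition~\ref{le:2} following the standard modulation scheme of \cite{MMR1}, adapted to the presence of the fixed tail $q_0$.

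\textbf{Step 1: Decomposition and orthogonality conditions.} First I would set up the geometric decomposition \eqref{defofeps} by an implicit function theorem argument. Starting from the a priori decomposition \eqref{decompo1} of $w=u-q_0$ with parameters $(\lambda_1,x_1,\e_1)$, I rescale and introduce the additional parameter $b$ together with the correction term $p\,Y_0$, where $p=q(t,0)$ is forced by the definition \eqref{defp}. The three orthogonality conditions \eqref{ortho1}, namely $(\e,y\Lambda Q)=(\e,\Lambda Q)=(\e,Q)=0$, are imposed via the implicit function theorem: the relevant Jacobian is (up to lower order terms) the Gram matrix of $\Lambda Q$, $y\Lambda Q$, $Q$ against $\partial_b Q_b|_{b=0}=P$, $\Lambda Q$, $Q$, which is nondegenerate by \eqref{PQ} (in particular $(P,Q)\neq 0$) and the classical relations $L\Lambda Q=-2Q$, $(\Lambda Q,Q)=0$. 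The smallness of $\e_1$ from \eqref{hypeprochien}, $x_0$ large and $\lambda_0$ small guarantee the map is a $\mathcal C^1$ diffeomorphism onto a neighborhood, yielding $(\lambda,x,b)\in\mathcal C^1$ and the bounds \eqref{lambdax} by continuity from \eqref{decompo0}.

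\textbf{Step 2: Conservation-law estimates.} For \eqref{twobound} and \eqref{energbound} I would expand the conserved mass $M(u)=M(u_0)$ and energy $E(u)=E(u_0)$ using $u=\lambda^{-1/2}(Q_b+pY_0+\e)((x-x)/\lambda)+q_0$. For the mass: $\int u^2 = \int(Q_b+pY_0+\e)^2 + 2\int(\text{soliton part})q_0\lambda^{1/2}(\cdots) + \int q_0^2$; using \eqref{eq:204}, the orthogonality $(\e,Q)=0$ to kill the linear-in-$\e$ main term, the bound $|p|=|q(t,0)|\lesssim \lambda^{1/2}|q_0(t,x)|\lesssim \lambda^{1/2}x^{-\theta}$ from Lemma~\ref{le:qz}, and $\int_{x>x_0/4}q_0^2\lesssim x_0^{-2\theta+1}$ (whence the $x_0^{-\theta+1/2}$ term), one isolates $\|\e\|_{L^2}^2$ controlled by $|\int u_0^2-\int Q^2|+|b|+|p|+x_0^{-\theta+1/2}$. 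For the energy one proceeds similarly using \eqref{eq:205}; the delicate cross term $-\frac16\int(6 Q_b^5 \cdot \lambda^{1/2}\text{-rescaled }q_0)$ produces, after rescaling $q_0$ near $y=0$ and Taylor expanding $q_0(t,\lambda y+x)\approx q_0(t,x)+\lambda y\,q_0'(t,x)$, exactly the quantity $c_0\frac{4}{\int Q}\lambda^{-3/2}x^{-\theta}$ inside the modulus in \eqref{energbound}, using $\int Q^5 = \int Q$ and $f_0(x)\sim c_0 x^{-\theta}$.

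\textbf{Step 3: Modulation equations.} This is the technical heart. I differentiate each orthogonality condition $(\e(s),Z)=0$ in $s$ for $Z\in\{y\Lambda Q,\Lambda Q,Q\}$. The equation for $\e$ is obtained by inserting \eqref{uu}/\eqref{defofeps} into \eqref{kdv} and subtracting the equations for $Q_b$ (namely \eqref{eq:201}) and for $q_0$ (namely \eqref{defqz}); this yields $\partial_s\e = (L\e)_y$-type terms plus modulation terms $(\frac{\lambda_s}{\lambda}+b)\Lambda Q_b + (\frac{x_s}{\lambda}-1)(Q_b)_y + \ldots$, plus the source $\Psi_b$ bounded by \eqref{eq:202}, plus terms coming from the tail: $b_s$-type contributions and, crucially, the interaction terms between the soliton and $q_0$ in the nonlinearity $(u^5)_x$ expanded around $Q_b+pY_0$. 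Taking scalar products and inverting the (nondegenerate) linear system in $(\frac{\lambda_s}{\lambda}+b, \frac{x_s}{\lambda}-1, b_s)$ gives \eqref{eq:2002bis}, \eqref{eq:2003}; the cross terms with $q_0$ contribute $\frac{\lambda}{x}|p|$ and $p^k$ terms, estimated via Lemma~\ref{le:qz}. For the key identity \eqref{fond}, I would \emph{not} estimate $b_s$ directly (it would be too lossy) but rather compute $\frac{d}{ds}(\frac{b}{\lambda^2}+\frac{4}{\int Q}c_0\lambda^{-3/2}x^{-\theta})$: the leading terms cancel by design — this is precisely the structure engineered in the strategy section \eqref{dynsyst} — leaving only the higher-order remainder on the right-hand side. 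Concretely, $\frac{d}{ds}(\frac{b}{\lambda^2})=\frac{b_s}{\lambda^2}-\frac{2b}{\lambda^2}\frac{\lambda_s}{\lambda}=\frac{1}{\lambda^2}(b_s+2b^2+\ldots)$ and $\frac{d}{ds}(\lambda^{-3/2}x^{-\theta}) = \lambda^{-3/2}x^{-\theta}(-\frac32\frac{\lambda_s}{\lambda}-\theta\frac{x_s}{x})$; combining with the flux relation from the $(\e,Q)$-orthogonality differentiation and the conservation structure $\frac{d}{ds}(\text{mass-like quantity})=0$, the principal terms match and cancel, using $\frac{\lambda_s}{\lambda}\approx -b$, $\frac{x_s}{\lambda}\approx 1$, and $\lambda^{-3/2}x^{-\theta}\cdot\lambda = \lambda^{-1/2}x^{-\theta}$ matching the $b/\lambda^2$ scale when $\lambda^{1/2}\sim x^{-\theta+1}$.

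\textbf{Main obstacle.} The hardest step is obtaining \eqref{fond} with the stated sharp right-hand side: one must carefully track all the soliton--tail interaction terms (there are several, coming from $Q_b^4(pY_0)$, $Q_b^3(pY_0)^2$, $Q_b^4\e$ interactions against $q_0$, and the spatial Taylor expansion of $q_0$ near the soliton's location) and verify that after using the orthogonality conditions and the precise asymptotics of Lemma~\ref{le:qz} (in particular $|\partial_t q_0|\lesssim x^{-\theta-3}$, $|q_0-f_0|\lesssim x^{-\theta-2}$), the errors are genuinely of size $\frac{1}{\lambda^2}(|b|^3+|p|^3+\ldots+\frac{\lambda^2}{x^2}|p|+\frac{\lambda}{x^3}|p|)$ and not larger. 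The algebraic cancellation that makes the leading $O(1/\lambda^2)$ terms disappear is the entire point of introducing the specific combination $\frac{b}{\lambda^2}+\frac{4}{\int Q}c_0\lambda^{-3/2}x^{-\theta}$, and verifying it requires the identity $(P,Q)=\frac{1}{16}(\int Q)^2$ together with $\int Q^5=\int Q$; getting the numerical constant $\frac{4}{\int Q}$ exactly right is where most of the care goes.
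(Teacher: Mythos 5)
Your plan follows the paper's proof essentially step for step: the implicit function theorem for the decomposition (note the Jacobian is taken against the derivative directions $P$, $\Lambda Q$, $Q'$ — your third entry should be $Q'$, not $Q$), the expansion of the conserved mass and energy with the soliton--tail cross term $-2\int (pY_0+q)Q$ producing the $c_0\lambda^{-3/2}x^{-\theta}$ contribution via $(Y_0,Q)=-\frac34\int Q$, and the derivation of \eqref{fond} from a refined $b_s$ equation obtained by projecting the $\e$-equation onto $Q$, rather than from the crude bound \eqref{eq:2003}. One concrete warning: the cancellation of the $O(|b||p|/\lambda^2)$ and $O(|p|\,|\lambda_s/\lambda|/\lambda^2)$ cross terms in \eqref{fond} — which is indispensable, since a remainder of size $|b||p|/\lambda^2$ would be too large for the subsequent bootstrap on $g$ — does not follow from the two identities you invoke ($(P,Q)=\frac1{16}(\int Q)^2$ and $\int Q^5=\int Q$); in the paper it is the separate algebraic identity $-(Y_0,\Lambda Q)+20((Q^3Y_0P)_y,Q)+20((PQ^3)_y,Q)=0$ (equation \eqref{cancel}), whose proof uses $LY_0=5Q^4$, $(LP)'=\Lambda Q$ and the boundary value $P(-\infty)=\frac12\int Q$. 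Since you explicitly flag this verification as the main obstacle, this is a refinement of your plan rather than an error, but it is the one place where the identities you list would not suffice.
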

\begin{remark}
The bounds \eqref{eq:2002bis}-\eqref{fond} will justify the dynamical system \fref{dynsyst}.
\end{remark}
  \begin{proof}
  
  {\bf step 1} Proof of  (i). This is a standard modulation claim. As usual, the decomposition is first performed for a fixed time $t$.
  For $t\in [0,t_0]$ fixed, define the map
  $$
 \Theta: (\bar b,\bar \l,\bar x,w,z_0) \mapsto \left( \int Q\bar \e  , \int \Lambda Q \bar \e  ,\int Q' \bar \e  \right),
  $$
  where $x_0=1/z_0$,
  $$
  \bar \e(y)=\e_{(\bar b,{\bar \l},\bar x,w_1,z_0)}(y) =  {\bar \l}^{\frac 12}   w_1\left(t, {\bar \l} y + \bar x\right)  -  {\bar \l}^{\frac 12} {\l_1^{\frac 12}(t)} q_0\left(t,x_1(t) + \bar x\right)\Yz(y)- Q_{\bar b}(y),
  $$
  $$
  w_1(t,y)= { \l_1(t)}^{\frac 12}   w \left(t, {\l_1(t)} y + x_1(t)\right)=Q(y)+\e_1(t,y).
  $$
 We have $\bar \e|_{(0,1,0,Q,0)}=0,$ so that $\Theta(0,1,0,Q,0)=0$ and
 $$
  \quad \partial_{\bar b} \bar \e |_{(0,1,0,Q,0)}= P,
 \quad \partial_{\bar \l} \bar \e|_{(0,1,0,Q,0)}= \Lambda Q,
 \quad \partial_{\bar x} \bar \e|_{(0,1,0,Q,0)}= Q'.
 $$
 so that  differentiating the map $\Theta$ with respect to the variables $(\bar b,\bar \l,\bar x)$
 at the point $(0,1,0,Q,0)$ we find the Jacobian matrix
 $$
 \left(\begin{array}{ ccc }  (P,Q) & (P,\Lambda Q) & (P,Q') \\ (\Lambda Q,Q) & (\Lambda Q,\Lambda Q) & (\Lambda Q,Q') \\   (Q',Q) & (Q',\lambda Q)  & (Q',Q')\\  \end{array}\right)=
  \left(\begin{array}{ ccc }  (P,Q) & (P,\Lambda Q) & 0\\0 & (\Lambda Q,\Lambda Q) &0 \\ 0 & 0  & (Q',Q')\\  \end{array}\right),
 $$
 which is not degenerate since $(P,Q)>0$. It follows from these observations that we can apply the implicit function theorem to $\Theta$: for $w_1$ small and $x_0$ large, there exists
 a unique $(\bar b ,\bar \l , \bar x)=(\bar b ,\bar \l , \bar x)(w_1,x_0)$ close to $(0,1,0)$ such that $\Theta(\bar b,\bar \l,\bar x,w_1,\tfrac 1{x_0})=0$. Then, we  define $b(t)=\bar b(w_1(t),x_0)$, $\l(t)=\bar \l(w_1(t),x_0) \l_1(t)$, $x(t)=\bar x(w_1(t),x_0)+x_1(t)$
 and $\e(t)=\bar \e(t)$.
 The regularity of $(b(t),\l(t),x(t))$   now follow
 from standard arguments.
 It follows  that we have the following decomposition of $u(t,x)$:
\begin{align}
u(t,x) & =  \frac 1{\l^{\frac 12}(t)} \left( Q_{b(t)} + p(t) \Yz+ \e   \right) \left( t,\frac {x-x(t)}{\l(t)}\right)
+q_0(t,x) \label{dec:2}\\
& = \frac 1{\l^{\frac 12}(t)} \left( Q_{b(t)} + p(t) \Yz+ \e + q \right) \left(t, \frac {x-x(t)}{\l(t)}\right)\label{dec:1}.
\end{align}

{\bf step 2} Equation of $\e$ and a priori bounds. To write the equation of $\e$, we first derive the equation of $w$ from the equations of $u(t)$ and $q_0(t)$:
 \be\label{eqw}
 w_t + (w_{xx} + w^5)_x = - (W_0)_x,
 \ee
 where
 \be\label{Wz}
 W_0 = 5 w^4 q_0 + 10 w^3 q_0^2 + 10w^2 q_0^3 + 5w q_0^4.
  \ee
  Second,   set 
  $ 
 \e_Y(s,y)= p(s) \Yz(y) +\e(s,y)$ so that
 $$
 w(s,x) = \frac{1}{\l^{\frac 12}(s)} \left( Q_{b(s)} +
 \e_Y \right)\left(s,\frac {x-x(s)}{\l(s)}\right).
  $$
By standard computations, we obtain for $\e_Y$:
  \begin{align}  \nonumber  
  \partial_s \e_Y & = (-\partial_{y}^2 \e_Y + \e_Y - (\e_Y+Q_b)^5 +Q_b^5)_y
- (5 Q^4 q)_y +\lsl \Lambda \e_Y
\\
& + \left(\frac {{\lambda}_{s}}{{\lambda}}+{b}\right) {\Lambda} Q_b 
+ \left(\frac { x_{s}}{\lambda} -1\right) (Q_b + \e_Y)_y   
+ \Phi_{b}  
 + \Psi_{{b}}   -W_y,
\label{eqofepsY}\end{align}
where  $$
W=5(Q_b+ \e_Y)^4 q- 5 Q^4 q + 10(Q_b+\e_Y)^3 q^2 + 10(Q_b+\e_Y)^2 q^3 + 5(Q_b+\e_Y) q^4.
$$
Finally, we replace   $ 
 \e(s,y) = \e_Y(s,y)- p(s) \Yz(y) $ and use $L \Yz = 5 Q^4  $, to obtain
   \begin{align}  \nonumber  
  \partial_s \varepsilon  &  = \left(-\partial_{y}^2 \e  + \e  - (\e +p \Yz +Q_b)^5 +Q_b^5 + p5Q^4 \Yz\right)_y
\\ & -p_s \Yz +(  5 Q^4 (p-q) )_y +\lsl (\Lambda \e+p\Lambda \Yz)
 + \left(\frac {{\lambda}_{s}}{{\lambda}}+{b}\right) {\Lambda} Q_b 
\nonumber \\
&+ \left(\frac { x_{s}}{\lambda} -1\right) (Q_b + \varepsilon+p  \Yz)_y   
+ \Phi_{b}  
 + \Psi_{{b}}   -W_y,
\label{eqofeps}\end{align}
and
\begin{align*}
W&=5(Q_b+ \e+p  \Yz)^4 q- 5 Q^4 q + 10(Q_b+\e+p  \Yz)^3 q^2 \\
&+ 10(Q_b+\e+p  \Yz)^2 q^3 + 5(Q_b+\e+p  \Yz) q^4.
\end{align*}
We now claim the following bounds which we will be used along the proof:

\begin{claim}\label{cl:one}
{\rm (a) Estimates on $q(s)$.}
\be\label{L2H1q}
\|q(s)\|_{L^2} \lesssim x_0^{-\theta+\frac 12}, \quad
\|q_y(s)\|_{L^2}\lesssim \l(s) x_0^{-\theta-\frac 12}, \quad
\|q(s)\|_{L^\infty}\lesssim \l^{\frac 12} (s) x_0^{-\theta}.
\ee
 {\rm (b) Properties of the function $p(s)$:}
\be\label{cl.1bis}
\left|p(s) - c_0 \l^{\frac 12}(s) x^{-\theta}(s)\right| \lesssim
c_0 \l^{\frac 12}(s) x^{-\theta-2}(s)\lesssim  x^{-2}(s) |p(s)|,
\ee
\be\label{cl.2}
 e^{-\frac{3|y|}{4}} |p(s)-q(s,y)|     \lesssim \frac {\l(s)}{x(s)} |p(s)| e^{-\frac {|y|} 4} 
,
\ee
\be\label{cl.4bis}
\left| \left( (5Q^4(p-q))_y ,Q\right) - c_0 \left( \int Q\right) \theta \l^{-\frac 32} x^{-\theta-1} \right| 
\lesssim  \frac {\l^2(s)}{x^2(s)} |p(s)|+\frac {\l(s)}{x^3(s)} |p(s)|,
\ee
\be\label{cl.2bis}
\left| p_s - \frac 12   \lsl p + \theta \frac {x_s} x p \right| \leq 
\frac {\l}{x^{3}}|p|   \left(\left|\xsl - 1\right|  + 1 \right),
\ee
\be\label{cl.2tri}
|p_s| \lesssim  \left| \lsl\right| |p| + \frac {\l}{x} |p| \left(\left|\xsl - 1\right|  + 1 \right).
\ee
{\rm (c) Estimates for the remainder term $W$:} Let 
\begin{align*}
\widetilde W&=5(Q_b+ \e+p  \Yz)^4 p- 5 Q^4 p + 10(Q_b+\e+p  \Yz)^3 p^2 \\
&+ 10(Q_b+\e+p  \Yz)^2 p^3 + 5(Q_b+\e+p  \Yz) p^4.
\end{align*}
Then,
\be\label{cl.3}
\int |W-\widetilde W| e^{-\frac 34 {|y|}}  \lesssim
\left(  |p|+|b| + \left( \int \e^2 e^{-\frac {|y|}{10}} \right)^{\frac 12}\right)  \frac {\l}{x} |p| ,
\ee
\be\label{cl.4}
|((\widetilde W)_y,\Lambda Q)|+|((\widetilde W)_y,Q)|\lesssim 
 b^2 + p^8 +|b| |p|  +|p| \left(\int \e^2 e^{-\frac {|y|}{10}}\right)^{\frac 12} +\int \e^2 e^{-\frac {|y|}{10}}  ,
\ee
\be\label{cl.5}
|((\widetilde W)_y,y \Lambda Q)| \lesssim 
\int \e^2 e^{-\frac {|y|}{10}} + b^2 + p^2.
\ee
\end{claim}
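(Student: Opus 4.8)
\textbf{Plan of proof for Claim \ref{cl:one}.} Throughout we use the explicit formulas $q(s,y)=\l^{\frac12}(s)\,q_0(t(s),\l(s)y+x(s))$ and $p(s)=q(s,0)$ from \eqref{defp}, the decomposition bounds \eqref{lambdax}, \eqref{surX}, the smallness of $\l_0$ and the largeness of $x_0$. The estimates are proved in the order (a), then \eqref{cl.1bis}, then the rest of (b), then (c).

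\textbf{Part (a).} By $L^2$ and $\dot H^1$ scaling invariance, $\|q(s)\|_{L^2}=\|q_0(t)\|_{L^2}$, $\|q_y(s)\|_{L^2}=\l(s)\|\partial_x q_0(t)\|_{L^2}$ and $\|q(s)\|_{L^\infty}=\l^{\frac12}(s)\|q_0(t)\|_{L^\infty}$. Mass conservation together with \eqref{deffz}--\eqref{estfz} gives $\|q_0(t)\|_{L^2}^2=\|f_0\|_{L^2}^2\lesssim\int_{x_0/4}^\infty x^{-2\theta}dx\lesssim x_0^{1-2\theta}$, which is $\ll\|Q\|_{L^2}^2$ for $x_0$ large; hence the sharp Gagliardo--Nirenberg inequality \eqref{gn} yields $\int q_0^6(t)\le\tfrac14\int(\partial_x q_0)^2(t)$, and then energy conservation, $\tfrac12\int(\partial_x q_0)^2(t)=E(f_0)+\tfrac16\int q_0^6(t)$, forces $\int(\partial_x q_0)^2(t)\lesssim|E(f_0)|\lesssim\int f_0'^2+\int f_0^6\lesssim x_0^{-1-2\theta}$. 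Combining, $\|q(s)\|_{L^2}\lesssim x_0^{\frac12-\theta}$, $\|q_y(s)\|_{L^2}\lesssim\l(s)\,x_0^{-\theta-\frac12}$, and $\|q(s)\|_{L^\infty}\lesssim\l^{\frac12}(s)\|q_0(t)\|_{L^2}^{\frac12}\|\partial_x q_0(t)\|_{L^2}^{\frac12}\lesssim\l^{\frac12}(s)\,x_0^{-\theta}$.

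\textbf{Part (b).} Everything is read off the explicit formula for $q$ via Taylor expansion in $y$ about $y=0$ and the asymptotics \eqref{leqz3}--\eqref{leqz4}. The hypothesis \eqref{surX} is used precisely so that $x(s)>\tfrac t2+\tfrac{x_0}2$, whence \eqref{leqz3}--\eqref{leqz4} hold at $x(s)$; moreover $x(s)-(\tfrac t2+\tfrac{x_0}2)\gtrsim x_0$, so $\l(s)y+x(s)$ remains in the region of validity of \eqref{leqz3} for all $|y|\lesssim x_0/\l(s)$, while for $|y|\gtrsim x_0/\l(s)$ the exponential decay present in \eqref{cl.2}--\eqref{cl.5} (explicitly, or via the rapidly decaying $Q$) is $\le e^{-cx_0/\l}$, which dominates every polynomial in $(\l,x,1/x_0)$ by \eqref{lambdax}; this disposes of all far-field contributions. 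Then \eqref{cl.1bis} is simply $|p-c_0\l^{\frac12}x^{-\theta}|=\l^{\frac12}|q_0(t,x)-f_0(x)|\lesssim\l^{\frac12}x^{-\theta-2}$ by \eqref{leqz3} ($k=0$), which for $x_0$ large also gives $\l^{\frac12}x^{-\theta}\sim|p|$ (here $c_0\ne0$). For \eqref{cl.2}, $p(s)-q(s,y)=\l^{\frac12}\big(q_0(t,x)-q_0(t,\l y+x)\big)$; for $|y|\lesssim x_0/\l$ the mean value theorem with $|\partial_x q_0(t,\cdot)|\lesssim x^{-\theta-1}$ on the relevant segment (\eqref{leqz3}, $k=1$) bounds this by $\l^{\frac32}|y|x^{-\theta-1}\lesssim\tfrac\l x|p|\,|y|$, and $|y|e^{-\frac34|y|}\lesssim e^{-\frac14|y|}$. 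Expanding to second order with $\partial_x q_0(t,x)=f_0'(x)+O(x^{-\theta-3})=-c_0\theta x^{-\theta-1}+O(x^{-\theta-3})$ gives, for $|y|\lesssim x_0/\l$, an expansion of $p-q(s,y)$ whose leading term is linear in $y$; combined with the elementary identity $((5Q^4 y)_y,Q)=\int Q^5=\int Q$ and the rapid decay of $Q$ to absorb the second-order Taylor remainder and the $O(x^{-\theta-3})$ correction, this yields \eqref{cl.4bis} (the errors being $\lesssim\tfrac{\l^2}{x^2}|p|+\tfrac\l{x^3}|p|$ after $\l^{\frac12}x^{-\theta}\sim|p|$). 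Finally \eqref{cl.2bis} is the chain rule: since $t_s=\l^3$ by \eqref{rescaledtime}, $p_s=\tfrac12\tfrac{\l_s}\l p+\l^{\frac72}\partial_t q_0(t,x)+\l^{\frac12}x_s\,\partial_x q_0(t,x)$; writing $\partial_x q_0(t,x)=-\tfrac\theta x q_0(t,x)+O(x^{-\theta-3})$ (from $f_0'=-\theta x^{-1}f_0$ and \eqref{leqz3}) gives $\l^{\frac12}x_s\partial_x q_0=-\theta\tfrac{x_s}x p+O(\l^{\frac12}x_s x^{-\theta-3})$, while $|\l^{\frac72}\partial_t q_0|\lesssim\l^{\frac72}x^{-\theta-3}$ by \eqref{leqz4}; both errors are $\lesssim\tfrac\l{x^3}|p|(|\tfrac{x_s}\l-1|+1)$ after $\l^{\frac12}x^{-\theta}\sim|p|$ and $\l^3\le\l$. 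Then \eqref{cl.2tri} follows from \eqref{cl.2bis} and $|\tfrac{x_s}x|\lesssim\tfrac\l x$.

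\textbf{Part (c).} Set $A=Q_b+\e+p\Yz$. For \eqref{cl.3} use the identity $W-\widetilde W=\big[(A+q)^5-(A+p)^5\big]-\big[q^5-p^5\big]-5Q^4(q-p)=(q-p)\,G$, where $G=5\big((A+\eta)^4-Q^4\big)-\sum_{j=0}^{4}q^{4-j}p^j$ with $|\eta|\le\max(|q|,|p|)$; since $A+\eta-Q=(Q_b-Q)+\e+p\Yz+\eta$, every monomial of $G$ carries a factor among $b$, $\e$, $p$, $q$ (hence small, by Part (a) and the decomposition bounds), or else a factor of the rapidly decaying $Q$, $Q_b$, $\Yz$. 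Multiplying by $e^{-\frac34|y|}$, pulling out $|q-p|\lesssim\tfrac\l x|p|\,e^{\frac12|y|}$ from \eqref{cl.2} (so that $e^{-\frac34|y|}e^{\frac12|y|}=e^{-\frac14|y|}$ still decays), and estimating the weighted integrals of the remaining small factors — the cubic and higher powers of $\e$ being controlled by a dyadic decomposition in $|y|$ and the weighted Gagliardo--Nirenberg inequality applied to the a priori bounds on $\e$ — gives \eqref{cl.3}. For \eqref{cl.4}--\eqref{cl.5}, note that after the subtraction of $5Q^4 p$ the function $\widetilde W=5(A^4-Q^4)p+10A^3p^2+10A^2p^3+5Ap^4$ is a polynomial in $A$ and in the $y$-independent quantity $p$, each monomial carrying a factor of $p$ together with a factor among $(Q_b-Q)$, $\e$, $p$, or a $Q$-decaying piece. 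Integrating by parts, $((\widetilde W)_y,Q)=-(\widetilde W,Q')$, $((\widetilde W)_y,\Lambda Q)=-(\widetilde W,(\Lambda Q)')$ and $((\widetilde W)_y,y\Lambda Q)=-(\widetilde W,\Lambda Q+y(\Lambda Q)')$ pair the (derivative-free) $\widetilde W$ against rapidly decaying functions of $Q$; the parity identities $\int Q^2 Q'=0$ and $(\Lambda Q,Q)=0$ kill the would-be principal monomials, and the surviving ones are bounded, after Cauchy--Schwarz against the weight $\int\e^2 e^{-\frac{|y|}{10}}$ and using the smallness of $b,p$, by the right-hand sides of \eqref{cl.4}--\eqref{cl.5}.

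\textbf{Main obstacle.} The only delicate step is the weighted bookkeeping in Part (c): monomials of $W-\widetilde W$ and $(\widetilde W)_y$ carrying a factor of $\e$ (not pointwise decaying) or $q$ (only uniformly small, by Part (a)) must be absorbed into $\int\e^2 e^{-\frac{|y|}{10}}$, respectively into $\tfrac\l x|p|$; this forces the split at $|y|\sim x_0/\l$ and the exploitation of the fact that the exponential weights beat both the mild growth $e^{\frac12|y|}$ coming from \eqref{cl.2} and any polynomial weight. Everything else is a direct, if lengthy, consequence of the explicit form of $q_0$ and of \eqref{leqz3}--\eqref{leqz4}.
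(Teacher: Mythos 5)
Your proof is correct and follows essentially the same route as the paper's: conservation laws plus Gagliardo--Nirenberg for (a), the explicit formula for $q$ together with \eqref{leqz3}--\eqref{leqz4} and a near/far splitting in $y$ for (b), and algebraic expansion combined with \eqref{cl.2}, parity and Cauchy--Schwarz for (c). The only point to adjust is in \eqref{cl.5}: against the odd test function $y\Lambda Q$ the parity cancellation fails for the pure $p^2$ monomial --- which is exactly why $p^2$ appears on that right-hand side and not in \eqref{cl.4} --- so that term survives by design rather than being killed by parity as your blanket statement suggests.
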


\begin{proof}[Proof of Claim \ref{cl:one}]
{\it Proof of} (a). Since $q_0(t)$ is solution of \eqref{kdv}, for all $t$,
$$
\|q_0(t)\|_{L^2}= \|f_0\|_{L^2} \lesssim x_0^{-\theta+\frac 12},\quad
E(q_0(t)) = E(f_0) \lesssim x_0^{-\theta- \frac 12},
$$
and  \eqref{L2H1q} follows.\\

{\it Proof of} (b). Since $p(s) = \lambda^{\frac 12}(s) q_0(s,x(s))$, 
\eqref{cl.1bis} follows from \eqref{surX} and \eqref{leqz3}. In particular, since $c_0<0$, $p<0$.

Next, by \eqref{surX}, \eqref{leqz3} and \eqref{estfz},  splitting the two cases $\l |y|<\frac {x(s)}{4}$
and $\l |y| > \frac {x(s)}{4}$, we have
\begin{align}
&  e^{-\frac{3|y|}{4}} |p(s)-q(s,y)|  =
e^{-\frac{3|y|}{4}} \l^{\frac 12}(s) |q_0(s,x(s))-q_0(s,\l(s) y +x(s))| 
\nonumber\\
&\lesssim  |y| e^{-  \frac {|y|} 2 } \l^{\frac 32}(s) \left(  e^{- \frac { |y|} 4}  \|q_0'(s)\|_{L^\infty(x>\frac 34 x(s))}
+   e^{-\frac {x(s)}{16\l(s)}}\|q_0'(s)\|_{L^\infty} \right)\nonumber \\
&   \lesssim c_0 e^{-\frac {|y|} 4}  \l^{\frac 32}(s)  \left(      x^{-\theta-1}(s)
+   e^{-\frac {x(s)}{16\l(s)}} x_0^{-\theta-1} \right)\nonumber \\
&\lesssim c_0 e^{-\frac {|y|} 4}  \l^{\frac 32}(s)        x^{-\theta-1}(s)  
 \lesssim \frac {\l}{x} |p| e^{-\frac {|y|} 4}.\label{pmq}\end{align}
Next, by \eqref{surX} and \eqref{leqz3},
\begin{align*}
&\left( (5Q^4(p-q))_y ,Q\right) = -5 \int Q^4 Q_y  (p-q)= -\int Q^5 q_y 
\\
&= -\l^{\frac 32}(s) \int Q^5(y) \partial_x q_0(s,\l(s) y +x(s)) dy\\
&= - \l^{\frac 32}  f_0'( x(s)) \int Q^5  - \l^{\frac 32} \int Q^5 \left(f_0'(\l y + x(s)) -f_0'(x(s))\right)dy  \\ & -\l^{\frac 32}(s) \int Q^5(y) \left( \partial_x q_0(s,\l(s) y +x(s) )-f_0'(\l y+x(s))\right) dy\\
&
 =c_0 \left( \int Q\right) \theta   \l^{\frac 32}(s)  x^{-\theta -1}(s) 
 + O\left( \l^{\frac 32}e^{-\frac 54   x(s) } \right) + O\left( \l^{\frac 52} x^{-\theta-2}\right) 
+O\left( \l^{\frac 32} x^{-\theta-3}(s)\right)\\
&
 =c_0 \left( \int Q\right) \theta   \l^{\frac 32}(s)  x^{-\theta -1}(s) 
 + O\left(\frac {\l^2}{x^2} |p|  \right)
 + O\left(\frac {\l}{x^3} |p|  \right),
 \end{align*}
where we have split the integrals above  into $|y| >\frac 14 x(s)$ and $|y|<\frac 14 x(s)$ and using the fact that for $|y|<\frac 14 x(s)$, $\l y + x(s) >x(s)-|y|> \frac 34 x(s)   > \frac 12 t + \frac 12 x_0$, so that
 \eqref{leqz3} holds for $x= \l y + x(s)$, and \fref{cl.4bis} is proved.\\
Now, we prove \eqref{cl.2bis}. By
explicit differentiation and Lemma \ref{le:qz},
\begin{align*}
p_s - \frac 12 \lsl p & = 
\l^{\frac 12}(q_0)_s(s,x(s)) + \l^{\frac 12} x_s \partial_x q_0(s,x(s))\\
& = \l^{\frac 72} (q_0)_t(s,x(s)) + \l^{\frac 12} x_s f_0'(x(s))   
+ O\left(  \frac {\l}{x^{3}} |p| \left(\left|\xsl - 1\right|  + 1 \right) \right)\\
& = -\theta \frac {x_s} x |p| + O\left(  \frac {\l}{x^{3}} |p| \left(\left|\xsl - 1\right|  + 1 \right) \right).
\end{align*} 
Since $|x_s| \leq \l (|\xsl -1| + 1)$, \eqref{cl.2tri} follows.\\

{\it Proof of} (c). For \eqref{cl.3}, we first note
\begin{align*}
\widetilde W - W &=5\left[(Q_b+ \e+p  \Yz)^4 -   Q^4\right]  (p-q) + 10(Q_b+\e+p  \Yz)^3 (p^2-q^2) \\
&+ 10(Q_b+\e+p  \Yz)^2 (p^3-q^3) + 5(Q_b+\e+p  \Yz) (p^4-q^4).
\end{align*}

Thus, 
 \begin{align*}
&  \int e^{-\frac 34 {|y|}} |W-\widetilde W|  \\ &\lesssim
\left(  |p|+|b| \right) \int   e^{- \frac 34 {|y|}}  |p-q|
+   \left( \int \e^2 e^{-\frac {|y|}{10}} \right)^{\frac 12}\left( \int   e^{-(\frac 32-\frac 1{10}) {|y|}}  |p-q|^2\right)^{\frac 12}  \\ & + \int   e^{-\frac 34 {|y|}} |p^2-q^2|   \lesssim
\left(  |p|+|b| + \left( \int \e^2 e^{-\frac {|y|}{10}} \right)^{\frac 12}\right)  \frac {\l}{x}|p|,
\end{align*}
 using \eqref{cl.2}, and the following similar estimate
$$
e^{-\frac 34 {|y|}} |p^2-q^2| \lesssim \frac {\l}{x} p^2 e^{-\frac {|y|}{4}}.
$$
Next, \eqref{cl.4} follows from the parity properties and then direct estimates. \eqref{cl.5} follows from direct estimates. Note that   $p^2$   appears in \eqref{cl.5} because there is no cancellation due to parity for this term. This concludes the proof of Claim \ref{cl:one}.
\end{proof}

{\bf step 3}  Estimates induced by the conservation laws.
By $L^2$ norm conservation,
\begin{align*}
& \int u^2(0) - \int Q^2  = \int Q_b^2 - \int Q^2 + \int (\e+p\Yz+q)^2 + 2 \int (\e+p\Yz +q) Q_b\\
&= 2 b(P,Q) + O(|b|^{2-\gamma}) +\|\e\|_{L^2}^2 + O(|b|^{1-\frac \gamma 2}\|\e\|_{L^2}) +O(|p|+   \|q\|_{L^2}).
\end{align*}
Estimate \eqref{twobound} follows. By energy conservation, $Q''+Q^5= Q$  and $\int \e Q=0$,
\begin{align*} & 2 \l^2 E(u_0)   = 2 E(Q_b + \e + p\Yz +q) \\& = 2 E(Q_b) - 2 \int (\e+p\Yz + q) \left( (Q_b-Q)_{yy}+ (Q_b^5 - Q^5) \right)
- 2 \int (p\Yz + q) Q \\ &+ \int (\e+p\Yz + q)_y^2 - \frac 13 \int \left( (Q_b+\e+p\Yz + q)^6 - Q_b^6 - 6 Q_b^5(\e+pY+q)\right)\\
& = - 2 b (P,Q) + O(b^2) 
+ O\left( |b|^{1+\frac 32 \gamma} \left( \|\e\|_{L^2} + |p| + \|q\|_{L^2}\right)\right) - 2 p \left( \int \Yz Q + \int Q\right)
\\ &+ 2 \int (p-q) Q + \|\e_y\|_{L^2}^2  + O(p^2) + O\left(\|\e_y\|_{L^2}(|p|+ \|q_y\|_{L^2})\right) + 2\l^2 E(q_0)
\\ &- \frac 13 \int \left( (Q_b+\e+p\Yz+q)^6 - Q_b^6 - 6Q_b^5 (\e+p\Yz+q) -q^6\right)
\end{align*}
By \eqref{YQ}, we have $\int \Yz Q + \int Q = \frac 14 \int Q$. Using in addition \eqref{PQ}, \eqref{cl.2}, \eqref{cl.1bis}, we obtain
\begin{align*}
\frac 1{\l^2}\|\e_y\|_{L^2}^2 & \lesssim |E(u_0)| + \left| \frac 14 \frac b{\l^2} \int Q + \frac p{\l^2}\right| 
+ \frac {b^2}{\l^2}+  \frac {|b|^{1+\frac 32 \gamma}}{\l^2} \left( \|\e\|_{L^2} + |p| + \|q\|_{L^2}\right) \\
 & + \frac { |p| }{\l x}  + \frac {p^2}{\l^2}  +  x_0^{-\theta-\frac 12}\\
 & \lesssim |E(u_0)| + \left| \frac 14 \frac b{\l^2} \int Q + c_0 \l^{-\frac 32} x^{-\theta} \right|  
+ \frac {b^2}{\l^2}+  
 \frac { |p| }{x^2} + \frac { |p| }{\l x}  + \frac {p^2}{\l^2}  +  x_0^{-\theta-\frac 12} 
  .
\end{align*}

\medskip

{\bf step 4} Modulation equations. We argue as in \cite{MMR1}, proof of Lemma 2.7, differentiating with respect to $s$ the orthogonality conditions $\int \e \Lambda Q=0$, $\int \e Q'=0$ and 
$\int \e Q=0$ and using \eqref{eqofeps} to obtain   \eqref{eq:2002bis}
and \eqref{eq:2003}.
Here, we  will  treat only  the   terms  coming from $q$ and $p\Yz$ in \eqref{eqofeps} and we refer the reader to \cite{MMR1} for more details on the other terms.
\medskip

\emph{Proof of   \eqref{eq:2002bis} and \eqref{eq:2003}.} 
It follows from computations in \cite{MMR1} proof of Lemma~2.7 and Claim \ref{cl:one} that
\begin{align*}
& \left| \left( {\frac{{\lambda}_s}{{\lambda}}} + {b} \right) -  \frac {( \e,L(\Lambda Q)')}{\|\Lambda Q\|_{L^2}^2}  \right|  
 \\ & \lesssim 
 \left( \left|  {\frac{{\lambda}_s}{{\lambda}}}+ {b}  \right|  +|b|\right)\left(|p|+ |b|  + \left( \int \varepsilon^2 {e^{-\frac{|y|}{10}}} \right)^{\frac 12} \right)  + |p| \left( \int \varepsilon^2 {e^{-\frac{|y|}{10}}} \right)^{\frac 12} \\
 & + \left| {\frac{x_s}{{\lambda}}} - 1 \right| \left(  |b|  + \left( \int \varepsilon^2 {e^{-\frac{|y|}{10}}} \right)^{\frac 12} \right)
 +|p_s|+|{b}_{s}|+\int \varepsilon^2 e^{-\frac{|y|}{10}} + p^8    +
 \frac {\l}{x} |p|   .
\end{align*}

We proceed similarly for $\xsl-1$, taking into account different cancellations due to parity properties \begin{align*}
&  \left|\left( {\frac{x_s}{{\lambda}}} - 1  \right) - \frac {( \e,L(y \Lambda Q)')}{\|\Lambda Q\|_{L^2}^2}  \right|
 \\ & \lesssim 
 \left( \left|  {\frac{{\lambda}_s}{{\lambda}}}+ {b}  \right| + \left| {\frac{x_s}{{\lambda}}} - 1 \right|  +|b|\right)\left(|p|+|b|  + \left( \int \varepsilon^2 {e^{-\frac{|y|}{10}}} \right)^{\frac 12} \right) 
 \\ &+|{b}_{s}|+\int \varepsilon^2 e^{-\frac{|y|}{10}} 
   +p^2 +\frac \l x |p|.
\end{align*}
 
Then, taking the scalar product of \eqref{eqofeps} by $Q$ and arguing similarly, we have the following 
rough estimate for $b_s$:
\begin{align*}
|{b}_{s}| &\lesssim  \left| {\frac{{\lambda}_s}{{\lambda}}} + {b}\right|^2 + \left| {\frac{x_s}{{\lambda}}} - 1 \right|^2 +  {|b|}^2 +  \int   \varepsilon^2   {e^{-\frac{|y|}{10}}} 
+ |p| \left( \int \varepsilon^2 {e^{-\frac{|y|}{10}}} \right)^{\frac 12}
\\ &+|p_s| + |b| |p| +p^8
 +\frac \l x |p| .
\end{align*}

Combining these estimates with (from \eqref{cl.2bis})
$$
|p_s| \leq \left|   \lsl \right| |p| + \frac {\l}{x}  |p|\left( \left|\xsl - 1\right| + 1 \right)
\leq |b| |p| +\left| \lsl+b\right| |p| +  \frac \l x |p|\left( \left|\xsl - 1\right| + 1 \right),
$$
from \eqref{cl.2tri},
we obtain  \eqref{eq:2002bis} and \eqref{eq:2003}. 

\medskip

\emph{Proof of \eqref{fond}.}
First, we derive a refined equation for $b_s$, taking the scalar product of equation \eqref{eqofeps} by $Q$ and proceeding as   in \cite{MMR1},  proof of Lemma 2.7.

Recall from \cite{MMR1},
$$(\Psi_b,Q)= -\frac {b^2}{8} \|Q\|_{L^1}^2 + O(|b|^3),\quad  (\Phi_b, Q) = -\frac {b_s}{16} \|Q\|_{L^1}^2 +O(|b|^{10}).
$$
Note also that from direct computations and parity properties
\begin{align*}
&\left|\left( ((\e+p\Yz + Q_b)^5 - Q_b^5 - p5Q^4 \Yz - 5Q^4 \e)_y,Q\right)
- 20 bp((Q^3\Yz P )_y,Q) \right| \\
&+ \left| \left((\widetilde W)_y,Q\right)-20 bp (( P Q^3)_y,Q)\right|\\
& \lesssim  |b|^3 +|p|^3 + \int \varepsilon^2 {e^{-\frac{|y|}{10}}} 
+ (|p|+|b|) \left(\int \varepsilon^2 {e^{-\frac{|y|}{10}}} \right)^{\frac 12} 
  .
\end{align*}
(See \cite{MMR1} for details on the nonlinear terms in $\e$.)
Using   \eqref{cl.4bis} and 
the above estimates
we find\begin{align}
&\bigg| b_s + 2b^2  - \frac {16}{(\int Q)^2} \bigg( -  p_s (\Yz,Q)  - \lsl p 
 (\Yz,\Lambda Q)  \nonumber\\
&  -  bp \left[ 20 ((Q^3\Yz P)_y,Q) + 20 ((PQ^3)_y,Q) \right]
 + c_0 \theta \left(\int Q\right) \l^{\frac 32}(s) x^{-\theta-1}(s) \bigg)\bigg|\\
& \lesssim |b|^3+  |p|^3 +\int \varepsilon^2 {e^{-\frac{|y|}{10}}} +(|p|+|b|) \left(\int \varepsilon^2 {e^{-\frac{|y|}{10}}} \right)^{\frac 12} 
 +\frac {\l^2}{x^2} |p|+\frac {\l}{x^3} |p|.\label{eq:bautre}
\end{align}

We claim  the following cancellation
\be\label{cancel}
- (\Yz,\Lambda Q) + 20 ((Q^3\Yz P)_y,Q) + 20 ((PQ^3)_y,Q) =0.
\ee
Indeed, 
$L(P')= (LP)' + 20 Q^3 Q' P = \Lambda Q + 20 Q^3 Q' P,$
and so from \eqref{eq:23},
\begin{align*}
& - (\Yz,\Lambda Q) + 20 ((Q^3\Yz P)_y,Q) + 20 ((PQ^3)_y,Q) 
\\ &= - (\Yz,\Lambda Q)-  20 (\Yz+1,PQ^3 Q')\\
& = - (\Yz+1 , L(P')) + \int \Lambda Q = - (L(\Yz+1),P') - \frac 12 \int Q
\\ & = - \int P' -  \frac 12 \int Q  =  P(-\infty) - \frac 12 \int Q =0.
\end{align*}
Thus,
\begin{align*}
& \left| 
 - \lsl p 
 (\Yz,\Lambda Q) +  bp \left[ 20 ((Q^3\Yz P)_y,Q) + 20 ((PQ^3)_y,Q) \right]
\right|  = 
|p| \left| \lsl+b \right|  |(\Yz,\Lambda Q)|.
\end{align*}

\medskip

Now, from $(\Yz,Q) =- \frac 34 \int Q$ (see   \eqref{YQ}),
  using \eqref{cl.1bis} and \eqref{cl.2bis} we note that 
\begin{align}
& \frac {16}{(\int Q)^2} \bigg( -  p_s (\Yz,Q)+ c_0 \theta \left(\int Q\right) \l^{\frac 32}(s) x^{-\theta-1}(s) \bigg)\nonumber \\
& =   \frac {12}{(\int Q)} c_0 \l^{\frac 12} x^{-\theta}  \left(  \frac 12 \lsl 
 +\frac \theta 3    \frac {x_s}{x}    \right) +O\left(\frac {\l}{x^{3}}| p|\right)  + O\left( \frac {\l}{x} |p| \left|\xsl-1\right|\right).
    \label{pascancel}
\end{align}
Therefore, 
\begin{align}
&\bigg| b_s + 2b^2 - \frac {4}{(\int Q)} c_0 \l^{\frac 12} x^{-\theta}  \left(  \frac 32 \lsl 
 +  \theta     \frac {x_s}{x}    \right)   \bigg| \nonumber \\
& \lesssim  |b|^3 +|p|^3 + \int \varepsilon^2 {e^{-\frac{|y|}{10}}} 
+ (|p|+|b|) \left(\int \varepsilon^2 {e^{-\frac{|y|}{10}}} \right)^{\frac 12} +
\frac {\l^2}{x^2} |p| + \frac {\l}{x^3} |p|
.\label{eq:b}
\end{align}

Now, we prove \eqref{fond}. By direct computation,
$$
\frac d{ds}\left( \frac 4{\left(\int Q\right)}c_0 \theta   \l^{-\frac 32}(s) x^{-\theta}\right) = - \frac {4}{(\int Q)} c_0 \l^{-\frac 32} x^{-\theta}  \left(  \frac 32 \lsl 
 +  \theta     \frac {x_s}{x}    \right)  
$$
$$
\frac d{ds}\left( \frac b{\l^2}\right)
 =   \frac {b_s}{\l^2} - 2 \lsl \frac b{\l^2}
 =   \frac {b_s}{\l^2}  + 2 \frac {b^2}{\l^2} - 2 \left( \lsl +b\right) \frac b{\l^2}.$$
 and \eqref{fond} follows from \eqref{eq:b} and \eqref{eq:2002bis}.
  \end{proof}

 
\section{Proof of Theorem \ref{th:1}}


This section is devoted to the proof of Theorem \ref{th:1} which follows from the modulation equations of Proposition \ref{le:2} coupled with the control of the well localized error $\e$ as in \cite{MMR1}. We present the   new dynamical arguments and report the proofs of  two technical Lemmas adapted from \cite{MMR1} to the Appendices.

\subsection{The bootstrap argument}

Let $$
\beta =   \frac { 2(\theta-1)} {2\theta-1},\quad
\theta = \frac {1-\frac \beta2}{1-\beta}, \quad 0< \beta< \frac {11}{20}, \quad
1<\theta <\frac {29}{18},$$ and define
\be
\label{defcobeta}
c_0=-\frac {\int Q}2(\theta-1)(2\theta-1)^{\theta-1}.
\ee
Given $s>s_0$, $(b(s),\l(s),x(s))\in \RR^*_+\times \RR^*_+\times \RR$, we define:
 $$
g(s) =\frac {b(s)}{\l^2(s)} + \frac 4{\int Q} c_0   \l^{-\frac 32}(s) x^{-\theta}(s),\quad
f(s) =   \l^{\frac 12} (s) + \frac 2 {\int Q} c_0 \frac {1}{\theta-1} {x^{-\theta+1}(s)}.
$$
Let  $(\varphi_i)_{i=1,2},\psi$  be smooth functions such that:
\bea
\label{defphi1bis}
\varphi_i(y) =\left\{\begin{array}{lll}e^{y}\ \ \mbox{for} \ \   y<-1,\\
 1+y  \ \ \mbox{for}\ \ -\frac 12<y<\frac 12,\\ 
 y^i\ \ \mbox{for}\ \ \mbox{for}\ \ y>2,
 \end{array}\right.  \ \ \varphi_i'(y) >0, \ \ \forall y\in \RR,
\eea
\bea
\label{defphi2}
\psi(y) =\left\{\begin{array}{ll} e^{2y}\ \ \mbox{for}   \ \ y<-1,\\
 1  \ \ \mbox{for}\ \ y>-\tfrac 12,\end{array}\right.  \ \ \psi'(y) \geq 0 \ \ \forall y\in \RR.
\eea
Let $B> 100$ and
 $$\psi_B(y)=\psi\left(\frac yB\right), \ \ \varphi_{i,B}=\varphi_i\left(\frac yB\right), \ \ i=1,2,$$ 
 and define the following norms on $\e$
  \begin{equation}\label{eq:no}
{\mathcal{N}_i}  (s)  = \int  \varepsilon_y^2(s,y) \psi_{B}(y)dy + \int  \varepsilon^2(s,y) \varphi_{i,B}(y)   dy, 
\end{equation} 
\be\label{eq:noloc}
{\mathcal{N}_{i,\rm loc}} (s)  =    \int\varepsilon^2 (s,y)     \varphi'_{i,B} (y) dy,\quad i=1,2.
\ee
\\
We now claim the following bootstrap Proposition which is the heart of the analysis:

\begin{proposition}[Bootstrap]
\label{PR:BS}
Let $s_0=s_0(\beta)>1$ large enough and set
\be
\label{initixzero}
x_0= x (s_0)=  \frac 1{(1-\beta)}{s_0^{1-\beta}}.
\ee
Let  $\e_0\in H^1$ be such that 
\be\label{A2}
s_0^{10}\left[ \int_{y>0} y^{10} \e^2_0(y) dy+\|\e_0\|^2_{H^1} \right]<1, \quad (\e_0, y {\Lambda}   Q )=(\e_0,\Lambda Q) =(\e_0,Q)=0.\ee
Then, there exists  
\be\label{estimation2}
(\l_0,b_0)\in\mathcal D =\left \{(\l,b) \ : \ | \l - s_0^{-\beta}| \leq s_0^{-\beta - \frac 1{10}},
\ |b  - \beta s_0^{-1}| \leq s_0^{-1-\frac 1{10}}\right\},
\ee 
such that the solution of \eqref{kdv} with initial data
\be\label{defu0}
u_0(x)   =  \frac 1{\l^{\frac 12}_0} \left( Q_{b_0} + \l^{\frac 12}_0 q_0(s_0,x_0) \Yz+ \e_0   \right) \left( \frac {x-x_0}{\l_0}\right)
+q_0(s_0,x)
\ee
has a decomposition $(b(s),\l(s),x(s), \e(s))$ as in Proposition \ref{le:2} which satisfies\footnote{recall that $s=s(t)$ is the rescaled time \fref{rescaledtime}.} on $[s_0,+\infty)$:
$$\leqno{\rm (BS1)}\quad
\left(|g(s)|  s^{1 - 2 \beta + \frac 15} \right)^2+ 
\left(|f(s)| s^{\frac \beta 2 + \frac 1{10}}\right)^2\leq 1; 
$$
$$\leqno{\rm (BS2)}\quad 
|b(s)|\leq   {10} s^{-1},
\
\frac 1{10} s^{-\beta}\leq \l(s) \leq 10 s^{-\beta},
\  
\frac 1{10} s^{1-\beta} \leq (1-\beta) x(s) \leq 10 s^{1-\beta};
$$
$$\leqno{\rm (BS3)}\quad
\int_{y>0}y^{10}\e^2(s,y)dy\leq 10 \l^{-10} ,\quad 
\mathcal N_{i}(s) \leq s^{-\frac 52},\quad \|\e(s)\|_{H^1} \lesssim \delta(\alpha^*).
$$
Moreover,
\be\label{sharp}
\left|  \frac {(1-\beta)}{s^{1-\beta}}  x (s) - 1\right|
+\left|  s^{\beta} \l(s) -1\right|
+\left| \frac {s}{\beta } b(s) -1\right|  \lesssim s^{-\frac 1{10}}, 
\ee
\be\label{sharpe}
\l^{-1} \|\e_y(s)\|_{L^2}+ \|\e(s)\|_{L^2} \lesssim 1.
\ee
\end{proposition}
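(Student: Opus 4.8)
The plan is a continuity/bootstrap argument over the rescaled time $[s_0,+\infty)$, coupled with a topological selection of the two free parameters $(\l_0,b_0)\in\mathcal D$. Fixing $(\l_0,b_0)$ and $\e_0$ as in \eqref{A2}, I would solve \eqref{kdv} with data \eqref{defu0}, apply the decomposition of Proposition \ref{le:2} (its hypotheses, including \eqref{surX}, being guaranteed by the a priori bounds below), and let $s^*$ be the supremum of times up to which (BS1)--(BS3) hold. The strategy is to show that (BS2)--(BS3) can be \emph{strictly} improved on $[s_0,s^*]$, so that the bootstrap can only exit through (BS1), and then to choose $(\l_0,b_0)$ so that (BS1) never saturates.

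To improve (BS3), I would adapt the mixed energy--virial method of \cite{MMR1}: introduce
\be
\mathcal F(s)=\int\psi_B\e_y^2+\int\varphi_{i,B}\e^2-\frac13\int\psi_B\big[(Q_b+\e)^6-Q_b^6-6Q_b^5\e\big]
\ee
with the cut-offs \eqref{defphi1bis}--\eqref{defphi2}. Spectral properties of $L$ and the orthogonality conditions \eqref{ortho1} give coercivity $\mathcal F\gtrsim\mathcal N_i$ up to a controlled error, while plugging \eqref{eqofeps} together with the modulation bounds \eqref{eq:2002bis}--\eqref{fond} should yield a Lyapunov monotonicity of the schematic form $\frac{d}{ds}\{\l^{-2j}\mathcal F\}+\l^{-2j}\mathcal N_{i,\rm loc}\lesssim\l^{-2j}(|b|^3+|p|^3+\cdots)$ for a suitable weight $j$, whose right-hand side is $s$-integrable with room to spare once $\l\sim s^{-\beta}$ and $|b|\sim s^{-1}$. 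The terms new with respect to \cite{MMR1} come from the tail $q$ and the correction $p\Yz$; I would control them with Claim \ref{cl:one} (notably \eqref{L2H1q}, \eqref{cl.2}, \eqref{cl.3}), and they are small precisely because $\theta>1$. Integrating from $s_0$ and using \eqref{A2} at $s=s_0$ should give $\mathcal N_i(s)\le\tfrac12 s^{-5/2}$; a separate monotonicity for $\int_{y>0}y^{10}\e^2$ (again as in \cite{MMR1}, reported in an appendix) improves the first bound of (BS3), and the $H^1$ bound follows from the mass/energy relations \eqref{twobound}--\eqref{energbound}.

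Feeding the improved bound on $\mathcal N_i$ back into \eqref{eq:2002bis}--\eqref{fond}, the right-hand sides become negligible and $(b,\l,x)$ solve the system \eqref{dynsyst} up to $s$-integrable errors; integrating it from $(\l_0,b_0)\in\mathcal D$ produces the sharp asymptotics \eqref{sharp}, hence the strict improvement of (BS2), and \eqref{sharpe} follows from \eqref{energbound}. It then remains to close (BS1), which encodes the two formal instability directions $\ell_0=\ell_1=0$. For this I would argue by a topological (Brouwer / no-retraction) argument, as in \cite{MMC}: set $\Phi(s)=\big(g(s)\,s^{1-2\beta+\frac15},\,f(s)\,s^{\frac\beta2+\frac1{10}}\big)$, and use $x_s=\l(1+\cdots)$, $\l_s/\l=-b+\cdots$, \eqref{fond}, \eqref{sharp} to check $\frac{df}{ds}=-\tfrac12\l^{5/2}g+O(\cdots)$ together with the analogous expansion of $g_s$; the extra powers of $s$ in $\Phi$ are tuned exactly so that the outgoing transversality $\frac{d}{ds}|\Phi(s)|^2>0$ holds on $\{|\Phi|=1\}$ (using the $\mathcal N_i$ bound on the error terms). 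Since the sizes of $\mathcal D$ in \eqref{estimation2} are matched to these exponents so that $\Phi(s_0)$ already saturates (BS1) on $\partial\mathcal D$, if (BS1) were violated for \emph{every} $(\l_0,b_0)\in\mathcal D$ then $(\l_0,b_0)\mapsto\Phi(s^*)$ (landing in $\{|\Phi|=1\}$ and equal to $\Phi(s_0)$ on $\partial\mathcal D$) would be a continuous retraction of $\mathcal D$ onto its boundary — impossible. This selects $(\l_0,b_0)\in\mathcal D$ with $s^*=+\infty$, and (BS1)--(BS3), \eqref{sharp}, \eqref{sharpe} hold on $[s_0,+\infty)$.

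The main obstacle is the Lyapunov monotonicity with the right weight $j$ in the presence of the slowly decaying tail: one must verify that the tail-generated terms (through $q$ and $p\Yz$) and the polynomially \emph{growing} weights $\varphi_{i,B}$ on the right remain compatible with a closed estimate throughout the whole regime $0<\beta<\frac{11}{20}$ (equivalently $1<\theta<\frac{29}{18}$), which is exactly the range that makes the construction work. The second delicate point is the transversality of $\Phi$ on $\{|\Phi|=1\}$, which dictates the precise exponents $1-2\beta+\frac15$ and $\frac\beta2+\frac1{10}$ appearing in (BS1).
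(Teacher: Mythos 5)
Your proposal is correct and follows essentially the same route as the paper: a bootstrap closed by the mixed energy--virial Lyapunov functional (with the tail terms $q$, $p\Yz$ controlled via Claim \ref{cl:one}), a separate monotonicity for $\int_{y>0}y^{10}\e^2$, integration of the modulation system to get \eqref{sharp}, and a Brouwer/no-retraction argument on the two unstable directions $(f,g)$ with the outgoing transversality on $\{|\Phi|=1\}$. The only cosmetic difference is the weight in the Lyapunov estimate (the paper uses $s^j$ with $j=\tfrac52$, yielding the error $s^{j-4}+s^{j-9+10\beta}$ whose integrability is exactly where $\beta<\tfrac{11}{20}$ enters), which you leave schematic but correctly identify as the delicate point.
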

 Let us observe that \fref{sharp} now gives the leading order behavior of the scaling parameter $\lambda(s)=\frac{1}{s^{\beta}}(1+o(1))$, and the conclusion of Theorem \ref{th:1} now immediately follow from the change of variables \fref{rescaledtime} depending on the value of $\beta$ as in step (iv) of section \ref{vneovnoen}.\\

The rest of this section is therefore devoted to the proof of Proposition \ref{PR:BS}. First observe by uniqueness of the decomposition that
$$
b_0=b(s_0),\quad  \l(s_0)=\l_0 \quad x(s_0)=x_0, \quad \e(s_0)=\e_0.
$$
We now argue by contradiction, assuming that for all 
$(\l_0,b_0)\in \mathcal D$,    we have
$$
s^*(\l_0,b_0) := \sup \left\{
s\geq s_0 \hbox{ such that   (BS1)-(BS2)-(BS3) holds on $[s_0,s]$}
\right\} <+\infty.
$$
We will derive a contradiction by first closing the bootstrap bounds (BS1)-(BS2)-(BS3), and then finding a couple $(\l_0,b_0)$ using a topological argument.


\subsection{First consequences of the bootstrap bounds} 


Let us start with some quantitative bounds which follow from the bootstrap bounds and Proposition \ref{le:2}.

\begin{claim}[Consequences of the bootstrap estimates]\label{le:3.1}
 
{\rm (i)} For $s_0=s_0(\beta)$ large enough, there holds:
\begin{itemize}
\item if $\beta>\frac 13$, for all $s\in (s_0,s^*)$, $t(s)= \int_{s_0}^{s} \l^3(s') ds'<1$;
\item if $0<\beta\leq \frac 13$, for all $t>0$, $x(t) \geq \frac 23 t + \frac 23 {x_0}$.
\end{itemize}
{\rm (ii)}
For all $s\in (s_0,s^*)$,
\be\label{BSp}
0< -p(s) \lesssim \frac 1 s,\quad \frac {\l}{x} \lesssim \frac 1 s,
\ee
\be\label{BSlsl}
\left|\lsl +b\right| + \left| \xsl - 1\right| \lesssim
\left(\int \varepsilon^2 {e^{-\frac{|y|}{10}}} \right)^{\frac 12} +  \frac 1{s^2},\ee
\be\label{BSparam}
\left| \lsl \right| \lesssim \frac 1 s + \left(\int \varepsilon^2 {e^{-\frac{|y|}{10}}} \right)^{\frac 12},\quad 
\left| x_s \right|  \lesssim  s^{-\beta}\left( 1+\left(\int \varepsilon^2 {e^{-\frac{|y|}{10}}} \right)^{\frac 12} \right)  ,
\ee
\be\label{BSbsps}
|b_s|+ |p_s| \lesssim   \int \varepsilon^2 {e^{-\frac{|y|}{10}}}    + \frac 1{s^2},
\ee
\begin{align}
& \left| \frac d{ds} \left(\frac b{\l^2} +\frac 4{\left(\int Q\right)}c_0     \l^{-\frac 32}(s) x^{-\theta}(s)\right)   \right| \nonumber \\ &\lesssim  \frac 1{\l^2} 
\left( \frac 1{s^{\frac {29}{10}}}   +  \frac 1 s \left(\int \varepsilon^2 {e^{-\frac{|y|}{10}}} \right)^{\frac 12}
+ \int \varepsilon^2 {e^{-\frac{|y|}{10}}}\right).\label{BSfond}
\end{align}
{\rm (iii)} For all $s\in (s_0,s^*)$,
\begin{align}
&\|\e(s)\|_{L^2}^2 \lesssim \left| \int u^2(0) - \int Q^2\right|+ s^{-1} + x_0^{-\theta+\frac 12},\label{BSeps}\\
& \frac 1{\l^2} \|\e_y(s)\|_{L^2}^2 \lesssim |E(u(0))|+ s^{- \frac 1{10}} + x_0^{-\theta-\frac 12} . 
\end{align}
 \end{claim}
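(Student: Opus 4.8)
The statement is obtained by feeding the bootstrap bounds (BS1)--(BS3) into the estimates of Proposition~\ref{le:2} and Claim~\ref{cl:one} and doing elementary bookkeeping on powers of $s$ (the quantity $\int\e^2e^{-|y|/10}$ being simply carried along, to be absorbed later by the energy/Virial functional). The only genuinely dynamical point is part~(i), namely the verification of hypothesis~\eqref{surX}. I would argue by continuity: \eqref{surX} holds with strict inequality at $t=0$ since $x(0)=x_0>\frac23x_0$, and as long as it holds Proposition~\ref{le:2}(iii) applies, so \eqref{eq:2002bis} gives $x_s/\l=1+o(1)$; since (BS2) forces $\l\le 10s^{-\beta}$ to be small, $\frac{dx}{dt}=x_s\l^{-3}=\l^{-2}(1+o(1))\ge \frac1{200}s_0^{2\beta}>\frac23$, whence $x(t)>x_0+\frac23t>\frac23x_0+\frac23t$. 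This strictly improves \eqref{surX}, so it holds on the whole bootstrap interval, and for $0<\beta\le\frac13$ this is exactly the asserted bound. For $\beta>\frac13$ one moreover integrates $\l^3\le 1000\,s^{-3\beta}$ over $[s_0,+\infty)$, getting $t(s)\le\frac{1000}{3\beta-1}s_0^{1-3\beta}<1$ once $s_0=s_0(\beta)$ is large.

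For (ii) I would start from Claim~\ref{cl:one}(b). Using the identity $\theta(1-\beta)=1-\frac\beta2$, (BS2) gives $\l^{1/2}x^{-\theta}\lesssim s^{-\beta/2}s^{-(1-\beta/2)}=s^{-1}$, so \eqref{cl.1bis} yields $0<-p(s)\lesssim s^{-1}$, while $\l/x\lesssim s^{-\beta}/s^{1-\beta}=s^{-1}$; this is \eqref{BSp}. Plugging $b^2,p^2,\frac\l x|p|\lesssim s^{-2}$ into \eqref{eq:2002bis} gives \eqref{BSlsl}; adding $|b|\lesssim s^{-1}$ and using $|x_s|\le\l(|x_s/\l-1|+1)$ gives \eqref{BSparam}. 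For \eqref{BSbsps} I would use \eqref{eq:2003} for $b_s$ and \eqref{cl.2tri} for $p_s$, absorbing every cross term $|p|(\int\e^2e^{-|y|/10})^{1/2}$ into $s^{-2}+\int\e^2e^{-|y|/10}$ by Young's inequality and discarding the higher-order terms $|b|^2,|b||p|,p^4,\frac\l x|p|$.

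The bound \eqref{BSfond} follows from \eqref{fond} once one checks that every error term there is $\lesssim s^{-29/10}$: the borderline one is $\frac{\l}{x^3}|p|\lesssim s^{-\beta}s^{-3(1-\beta)}s^{-1}=s^{2\beta-4}$, which is $\le s^{-29/10}$ exactly because $\beta<\frac{11}{20}$ (the terms $|b|^3,|p|^3,\frac{\l^2}{x^2}|p|$ being of smaller order). For (iii), \eqref{BSeps} is immediate from \eqref{twobound} and $|b|,|p|\lesssim s^{-1}$. The key term of \eqref{energbound} is exactly $|g(s)|=\big|\frac b{\l^2}+\frac4{\int Q}c_0\l^{-3/2}x^{-\theta}\big|$, which by (BS1) is $\le s^{2\beta-6/5}\le s^{-1/10}$ for $\beta\le\frac{11}{20}$; the remaining terms $\frac{b^2}{\l^2},\frac{p^2}{\l^2},\frac{|p|}{\l x},\frac{|p|}{x^2}$ are all $\lesssim s^{2\beta-2}\lesssim s^{-1/10}$ by (BS2) and \eqref{BSp}, which gives the energy bound.

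I expect the main obstacle to be part~(i): one must make the propagation of \eqref{surX} rigorous without circularity, i.e.\ check that before \eqref{surX} is known the information available --- the decomposition of Proposition~\ref{le:2}(i) and the crude bootstrap bounds (BS2) --- already forces $x$ to be increasing with a lower bound on $dx/dt$, so that the continuity argument closes. Everything else is bookkeeping of powers of $s$; the one calibration to watch is that $\beta<\frac{11}{20}$ is precisely the threshold making both $\frac{\l}{x^3}|p|\lesssim s^{-29/10}$ in \eqref{BSfond} and $|g(s)|\lesssim s^{-1/10}$ in the energy bound.
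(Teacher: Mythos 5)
Your treatment of parts (ii) and (iii) is essentially the paper's own: the authors simply declare \eqref{BSlsl}--\eqref{BSfond} to be ``immediate consequences'' of \eqref{eq:2002bis}--\eqref{fond}, \eqref{cl.2tri}, the bootstrap assumptions and the upper bound $\beta<\frac{11}{20}$, and your bookkeeping --- in particular the identification of $\frac{\l}{x^3}|p|\sim s^{2\beta-4}$ and $|g(s)|\le s^{2\beta-\frac 65}$ as the two terms calibrated precisely by $\beta<\frac{11}{20}$, and the derivation of \eqref{BSp} from \eqref{cl.1bis} together with $\frac \beta 2+\theta(1-\beta)=1$ --- is the correct filling-in. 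For part (i) you take a genuinely different, though valid, route. The paper never uses the modulation equation for $x$ here, so the circularity you worry about does not arise: on the bootstrap interval $(s_0,s^*)$ the lower bound $(1-\beta)x(s)\ge \frac 1{10}s^{1-\beta}$ is part of the standing hypothesis (BS2), and the authors only convert between $t$ and $s$ via $t(s)=\int_{s_0}^{s}\l^3$, using $\l\le 10 s^{-\beta}$ to get $s\gtrsim s_0e^{10^{-3}t}$ when $\beta=\frac13$ and $s\gtrsim t^{\frac 1{1-3\beta}}$ when $\beta<\frac13$, then read off $x(t)\gtrsim t^{1+\beta}+x_0$ directly from (BS2); for $\beta>\frac13$ one gets $t(s)<1$, and \eqref{surX} then follows from $x\ge\frac45 x_0$ alone. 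Your alternative --- a continuity argument propagating \eqref{surX} through $\frac{dx}{dt}=x_s\l^{-3}\ge\frac12\l^{-2}>\frac23$ --- also closes (the strict margin $\frac13 x_0$ makes the open/closed argument work) and is arguably more uniform in $\beta$, but it is extra machinery given that the lower bound on $x(s)$ is already assumed within the bootstrap.
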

  \begin{proof}  
 Let $\frac 13 <\beta<\frac {11}{20}$. Then
 $$
 t(s) = \int_{s_0}^{s} \l^3(s') ds' \leq 
 10^3 \int_{s_0}^{s} (s')^{-3 \beta} ds'  
 \leq \frac {10^3}  {3\beta-1} \left(s_0^{-3\beta+1}-s^{-3\beta +1}\right)
 <1
 $$
 for $s_0$ large enough. For $\beta = \frac 13$,
 $$
 t(s) = \int_{s_0}^{s} \l^3(s') ds' \leq 10^3\, \log\frac s{s_0} \quad 
 \hbox{so that} \quad s \geq s_0 e^{10^{-3} t}.
 $$
 Thus,
 $$
 x(s) \geq \frac 3{20} s^{\frac 23} \geq \frac 3{20} s_0^{\frac 23} e^{\frac 23 10^{-3} t}\geq 10 t^3 .
 $$
 for $s_0$ large enough.
 Since $x(s) \geq \frac 45 x_0$, we obtain $x(t) >  t^3 + \frac 12 x_0$. Finally, for $0<\beta <\frac 13$,
 $$
 t(s) = \int_{s_0}^{s} \l^3(s') ds' \leq \frac {10^3}  {1-3\beta} \left(s^{1-3\beta}-s_0^{1-3\beta}\right)
 $$
so that for $s_0$ large
$$s \geq \left( \frac {(1-3\beta) t}{10^3}+s_0^{1-3\beta}\right)^{\frac 1 {1-3\beta}} \geq 100 t^{\frac {1+\beta}{1-\beta}},
 $$
and
 $$x(s) \geq \frac 3{20} s^{1-\beta} \geq 15  t^{1+\beta}.$$
  Since $x(s) \geq \frac 45 x_0$, we obtain $x(t) >  t^{1+\beta} + \frac 23 x_0$.\\
  The estimate \eqref{BSp} is a consequence of \eqref{cl.1bis} and
 $\frac \beta 2 + (1-\beta) \theta = 1$, so that
 $$ 0< p \lesssim \l^{\frac 12 } x^{-\theta} (s) \lesssim
 s^{-\frac \beta 2} s^{-\theta  (1-\beta)} \lesssim \frac 1s.
 $$
 The estimates  \eqref{BSlsl}-\eqref{BSfond} are   immediate consequences  of \eqref{eq:2002bis}-\eqref{fond}, 
 \eqref{cl.2tri}, the bootstrap assumptions and the upper bound $\beta<\frac {11}{20}$.
 \end{proof}

\subsection{Closing the estimates on $\e$} We now close the bounds on $\e$ and claim the improved bound: for all $s\in [s_0,s^*]$,
 $$\leqno{\rm (BS3')}\quad
\int_{y>0}y^{10}\e^2(s,x)dx\leq 5 \l^{-10} ,\quad 
\mathcal N_{i}(s) \leq \frac 12 s^{-\frac 52},\quad \|\e(s)\|_{H^1} \lesssim \delta(\alpha^*).
$$
 Let $\varphi_{10}$ be a smooth function such that
$$\varphi_{10}(y)=\left\{\begin{array}{ll} 0\ \ \mbox{for}\ \ y\leq 0, \\ y^{10}\ \  \mbox{for}\ \ y\geq 1\end{array}\right., \quad 0\leq \varphi_{10}\lesssim \varphi_{10}' \hbox{ for $0<y<1$}.$$
The control of the tail of $\e$ on the right is a direct consequence of the following brute force monotoncity formula:

 \begin{lemma}[Dynamical control of the tail on the right]
\label{lemmatail}
For all $s\in [s_0,s^*]$,\be
\label{keyestimate}
 {\l^{-10}}\frac{d}{ds}\left\{\l^{10}\int\varphi_{10}\e^2\right\}\lesssim \mathcal N_{1,\rm loc}+\frac 1{s^2}.
\ee
\end{lemma}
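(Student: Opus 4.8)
\emph{Plan.} The plan is to differentiate the weighted energy in $s$, insert the equation \eqref{eqofeps} for $\partial_s\e$, and show that the dispersive part of the flow produces a coercive quadratic form while every other contribution is $\lesssim\mathcal{N}_{1,\rm loc}+s^{-2}$ up to a harmless Gronwall term. Writing
\[
\l^{-10}\frac{d}{ds}\Big\{\l^{10}\int\varphi_{10}\e^2\Big\}=10\,\frac{\l_s}{\l}\int\varphi_{10}\e^2+2\int\varphi_{10}\,\e\,\partial_s\e ,
\]
one sees the role of the prefactor $\l^{10}$: the first term is the only place $\l$ enters badly, and it is designed to cancel the bad part of the scaling contribution below. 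Since $\varphi_{10}\equiv 0$ on $(-\infty,0]$, all integrals are effectively over the right, which is exactly where the profiles $Q_b,\Psi_b,\Phi_b,\Yz$ are exponentially small, so their contributions will be controlled by $\mathcal{N}_{1,\rm loc}$.

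First I would treat the leading dispersive term $(-\e_{yy}+\e)_y$: three integrations by parts give $-3\int\varphi_{10}'\e_y^2-\int\varphi_{10}'\e^2+\int\varphi_{10}'''\e^2$. I keep $-\int\varphi_{10}'(3\e_y^2+\e^2)\leq 0$ as the coercive part and combine the error with $-\int\varphi_{10}'\e^2$: since on $\{y\geq 1\}$ one has $\varphi_{10}'''-\varphi_{10}'=720y^7-10y^9\leq 0$ for $y$ large and is bounded otherwise, while $\varphi_{10}'''$ is bounded on $[0,1]$ and $\varphi_{10}\equiv 0$ on $(-\infty,0]$, we get $\int(\varphi_{10}'''-\varphi_{10}')\e^2\lesssim\int_{0\leq y\lesssim 1}\e^2\lesssim\mathcal{N}_{1,\rm loc}$, using that $\varphi_{1,B}'\equiv 1/B$ on the fixed set $\{|y|<B/2\}$. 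For the scaling term, $\frac{\l_s}{\l}\Lambda\e$ contributes $-\frac{\l_s}{\l}\int y\varphi_{10}'\e^2$ after integration by parts, so together with $10\frac{\l_s}{\l}\int\varphi_{10}\e^2$ one obtains $\frac{\l_s}{\l}\int(10\varphi_{10}-y\varphi_{10}')\e^2$; this vanishes on $\{y\geq 1\}$ because there $y\varphi_{10}'=10\varphi_{10}$, and on $0\leq y\leq 1$ it is $\lesssim|\tfrac{\l_s}{\l}|\int_0^1\e^2\lesssim(s^{-1}+\mathcal{N}_{1,\rm loc}^{1/2})\mathcal{N}_{1,\rm loc}\lesssim\mathcal{N}_{1,\rm loc}$ by Claim \ref{le:3.1}.

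Next I would dispatch the remaining source terms of \eqref{eqofeps} by Cauchy--Schwarz against $\varphi_{10}\e$. The profiles $Q_b,\Psi_b,\Phi_b$ and $\Yz,\Lambda\Yz,\Yz_y$ are exponentially localized (their $\mathbf 1_{[-2,0]}(|b|^\gamma\cdot)$--type pieces are supported in $\{y<0\}$ where $\varphi_{10}\equiv 0$ and simply drop), so $\int\varphi_{10}\,|\cdot|\,|\e|\lesssim(\int e^{-|y|/2}\e^2)^{1/2}\lesssim\mathcal{N}_{1,\rm loc}^{1/2}$; multiplying by the small coefficients $\tfrac{\l_s}{\l}+b,\ \tfrac{x_s}{\l}-1,\ b,\ b_s,\ p_s,\ p,\ \tfrac{\l}{x}$ (all $\lesssim s^{-1}$, and the modulation ones $\lesssim s^{-2}+\mathcal{N}_{1,\rm loc}^{1/2}$ by Claim \ref{le:3.1}) yields $\lesssim\mathcal{N}_{1,\rm loc}+s^{-2}$. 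The $q$-terms $(5Q^4(p-q))_y$ and $W_y$ are handled identically, using $\|q\|_{L^\infty}\lesssim\l^{1/2}x_0^{-\theta}$, $|p|\lesssim s^{-1}$ and $e^{-3|y|/4}|p-q|\lesssim\tfrac{\l}{x}|p|\,e^{-|y|/4}$ from Claim \ref{cl:one}. Among the nonlinear terms $-((\e+p\Yz+Q_b)^5-Q_b^5-5pQ^4\Yz)_y$, all pieces carrying at least one factor $Q_b$ or $\Yz$ are again absorbed by exponential localization, while the purely quintic self-interaction gives, after integration by parts, $\tfrac53\int\varphi_{10}'\e^6\lesssim\|\e\|_{L^\infty}^4\int\varphi_{10}'\e^2\lesssim\delta(\alpha^*)^4\int\varphi_{10}\e^2$ (using $\varphi_{10}'\leq 10\varphi_{10}$ for $y\geq 1$), which remains dominated by the coercive term.

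The main obstacle I expect is the translation term $2(\tfrac{x_s}{\l}-1)\int\varphi_{10}\,\e\e_y=-(\tfrac{x_s}{\l}-1)\int\varphi_{10}'\e^2$, where the weight genuinely grows rather than being localized; the point is that the only polynomially weighted norm of $\e$ allowed to appear is $\int\varphi_{10}\e^2$ itself, which is the bootstrapped quantity. Using $\varphi_{10}'=\tfrac{10}{y}\varphi_{10}\leq 10\varphi_{10}$ on $\{y\geq 1\}$ (and boundedness near $0$) one gets $\int\varphi_{10}'\e^2\lesssim\int\varphi_{10}\e^2+\mathcal{N}_{1,\rm loc}$, so this term, together with the $\delta^4\int\varphi_{10}\e^2$ and $\int\varphi_{10}'''\e^2$ left-overs, is $\lesssim\mathcal{N}_{1,\rm loc}+s^{-2}+|\tfrac{x_s}{\l}-1|\,\l^{-10}\big(\l^{10}\int\varphi_{10}\e^2\big)$; since $\int_{s_0}^{\infty}|\tfrac{x_s}{\l}-1|\,ds\lesssim\int s^{-5/4}\,ds<\infty$ by (BS3), this extra Gronwall term only amplifies the final bound by an $O(1)$ factor. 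Integrating \eqref{keyestimate} from $s_0$, using $\l^{10}(s_0)\int\varphi_{10}\e_0^2\lesssim s_0^{-10\beta-10}$ from \eqref{A2}--\eqref{estimation2} and $\int_{s_0}^\infty\l^{10}(\mathcal{N}_{1,\rm loc}+s^{-2})\,ds\lesssim s_0^{-1}$, then gives $\l^{10}\int\varphi_{10}\e^2\ll 5$, i.e. the improved tail bound in (BS3$'$). The genuinely delicate point throughout is the bookkeeping: verifying that no weight worse than $\varphi_{10}$ ever appears, and that each term is honestly either $\mathcal{O}(\mathcal{N}_{1,\rm loc}+s^{-2})$ or a time-integrable Gronwall contribution.
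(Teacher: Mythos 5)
Your proposal follows the paper's strategy almost exactly: differentiate, insert \eqref{eqofeps}, integrate by parts the linear and scaling terms using $y\varphi_{10}'=10\varphi_{10}$ for $y\geq 1$ so that the scaling contribution cancels against $10\lsl\int\varphi_{10}\e^2$, and kill the remaining source terms via the exponential localization of $Q_b,\Psi_b,\Phi_b,\Yz$ on the right (where $\varphi_{10}$ lives) together with the smallness of the modulation coefficients. The one place you deviate is the treatment of the terms proportional to $\int\varphi_{10}'\e^2$ (the translation term $(\xsl-1)\int\varphi_{10}'\e^2$ and the sextic self-interaction): you convert $\varphi_{10}'\lesssim\varphi_{10}$ and run a Gronwall argument, which proves a differential inequality with an extra term $|\xsl-1|\int\varphi_{10}\e^2$ on the right --- weaker than \eqref{keyestimate} as stated, though still sufficient to close (BS3$'$). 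This detour is avoidable: you have already retained the coercive term $-\int\varphi_{10}'(3\e_y^2+\e^2)$ from the integration by parts, and since the offending coefficients $|\xsl-1|\lesssim s^{-2}+\mathcal N_{1,\rm loc}^{1/2}$ and $\|\e\|_{L^\infty}^4\lesssim\delta(\alpha^*)^4$ are small, these contributions are absorbed directly into that negative term, which is exactly what the paper does and what yields \eqref{keyestimate} verbatim.

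One detail you gloss over: you assert that $W_y$ is ``handled identically'' to $(5Q^4(p-q))_y$, but the latter carries the exponentially localized factor $Q^4$ while $W_y$ contains purely-$\e$ pieces such as $(\e^4 q)_y$ whose integration by parts produces $\int\e^{2+k}\,q_y\,\varphi_{10}$ with the growing weight $\varphi_{10}$ against the merely polynomially decaying $q_y$. Closing this requires the pointwise bound $|q_y(s,y)|\lesssim \l^{\frac12}x^{-\theta}(1+y)^{-1}$ for $y>0$ (from $q_y=\l^{\frac32}\partial_xq_0(\cdot,\l y+x)$ and Lemma \ref{le:qz}), so that $|q_y|\varphi_{10}\lesssim \l^{\frac12}x^{-\theta}\varphi_{10}'$ and the term is again absorbed into the coercive part. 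You flag the bookkeeping concern but do not carry out this step; it is the only term where the weight could genuinely get worse than $\varphi_{10}'$, so it deserves to be written out.
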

See proof of Lemma \ref{lemmatail} in Appendix \ref{appA}.
\\
 
The  control of   $\mathcal N_i(\e)$ norm, which is fundamental for the proof, now follows by adapting the mixed Energy/Morawetz monotonicity formula first derived in \cite{MMR1}. Recall the definitions \fref{defphi1bis}, \fref{defphi2}, we claim:

\begin{lemma}[Monotonicity formula]
\label{propasymtp}
There exist $\mu>0$ such that the following holds for $B>100$ large enough.   Let   the   energy--virial Lyapounov functionals for $i=1,2$,  
\begin{align}
  {\cal F}_{i} & =  \int \Big[\varepsilon_y^2\psi_B + \varepsilon^2 \varphi_{i,B}  \label{feps}\\ &- \frac  13 \left((\varepsilon {+} Q_b{+}p\Yz{+}q)^6  -  (Q_b{+}p\Yz{+}q)^6   - 6 \varepsilon \left(Q_b^5{+}q^5{+}5Q^4(p\Yz{+}q)\right)\right)\psi_B\Big] .\nonumber
\end{align}
Then   the following estimates hold on $[s_0, s^*]$:\\
{\em (i)  Lyapounov control}: for $i=1,2$, $j\geq 0$
\be
\label{lyapounovconrolbis}
\frac{d}{ds}\left[ s^j {\mathcal F_{i}} \right]+      {\mu} s^j \int   \left(\varepsilon_y^2+\e^2\right)   \varphi'_{i,B}  \lesssim s^{j-4} +s^{j-9+10 \beta} .
\ee
{\em (iii) Coercivity of $\mathcal F_{i}$ and pointwise bounds}: for $i=1,2$, $j\geq 0$,
\begin{equation}
\label{lowerbound}
-\frac 1{s^4} + \mathcal N_i \lesssim \mathcal F_{i}\lesssim \frac 1{s^4} + \mathcal N_i .\end{equation}
\end{lemma}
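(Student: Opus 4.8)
The proof adapts the energy--virial monotonicity of \cite{MMR1}; the genuinely new input is the treatment of the terms in \eqref{eqofeps} carrying the tail $q$ and the scalar $p$. I would split the argument into a coercivity step and a Lyapounov step.

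\emph{Coercivity \eqref{lowerbound}.} Expanding the sextic term in \eqref{feps} around $\e=0$, it vanishes at $\e=0$, its $\e$--linear part has coefficient $-2\big((Q_b{+}p\Yz{+}q)^5-Q_b^5-q^5-5Q^4(p\Yz{+}q)\big)=O(|b||p|+p^2+\dots)$, and its $\e^2$--part is $-5(Q_b{+}p\Yz{+}q)^4\e^2\psi_B$. Hence, modulo cubic and higher terms in $\e$ (absorbed using $\|\e\|_{H^1}\lesssim\delta(\alpha^*)$ from (BS3)) and the $\e$--linear terms (handled by Cauchy--Schwarz and Young's inequality, producing $\eta\mathcal N_i+Cs^{-4}$), $\mathcal F_i$ equals the quadratic form $\int\e_y^2\psi_B+\int\e^2\varphi_{i,B}-5\int Q^4\e^2\psi_B$ up to an $o(1)\mathcal N_i+Cs^{-4}$ error coming from $(Q_b{+}p\Yz{+}q)^4-Q^4=O(|b|+|p|+\|q\|_{L^\infty})$. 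Since $\psi_B,\varphi_{i,B}$ are close to $1$ on compact sets for $B$ large and $\psi_B$ is bounded, the localized sub-coercivity of $L$ under the orthogonality conditions \eqref{ortho1}---exactly as in \cite{MMR1}---gives $\mathcal N_i-Cs^{-4}\lesssim\mathcal F_i$, and the matching upper bound is analogous. The bounds $|b|,|p|\lesssim s^{-1}$ (from (BS2) and Claim \ref{le:3.1}) and $\|q\|_{L^2}\lesssim x_0^{-\theta+\frac12}$ guarantee that all background errors are either absorbed into $\mathcal N_i$ with a small constant or bounded by $s^{-4}$.

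\emph{Lyapounov control \eqref{lyapounovconrolbis}.} I would differentiate $\mathcal F_i$ in $s$ and insert \eqref{eqofeps} for $\partial_s\e$. The purely linear KdV part $(-\e_{yy}+\e-5Q_b^4\e)_y$ produces, after the integrations by parts of \cite{MMR1} and because $\psi_B$ is flat and $\varphi_{i,B}$ increasing on the right, the sign-definite dissipative term $-\mu\int(\e_y^2+\e^2)\varphi'_{i,B}$ together with further favourably signed terms carrying $\psi'_B$, while the $s$--derivatives of $Q_b$ and of the background profile are absorbed. The remaining contributions are: (a) the modulation terms with coefficients $\frac{\l_s}\l+b$, $\frac{x_s}\l-1$, $b_s$, $p_s$, estimated through Claim \ref{le:3.1} and then bounded via Cauchy--Schwarz against the weighted norms by $\eta\int(\e_y^2+\e^2)\varphi'_{i,B}+Cs^{-4}$ for $\eta$ as small as wished; (b) the profile errors $\Psi_b$ and $\Phi_b$, which are exponentially or compactly localized of size $O(b^2)$, hence $\lesssim s^{-4}$ after Young's inequality; (c) the tail terms $-p_s\Yz$, $(5Q^4(p-q))_y$ and $W_y$, whose interaction with the polynomially growing weight $\varphi_{i,B}$ is controlled through Claim \ref{cl:one}, the sizes $\l\sim s^{-\beta}$, $x\sim s^{1-\beta}$, $|p|\sim s^{-1}$ of (BS2), and---for the few contributions that genuinely grow against $\varphi_{i,B}$---the weak a priori bound $\int_{y>0}y^{10}\e^2\lesssim\l^{-10}\sim s^{10\beta}$ of (BS3); a careful bookkeeping bounds the worst such contribution by $s^{-9+10\beta}+\eta\int(\e_y^2+\e^2)\varphi'_{i,B}+Cs^{-4}$. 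Collecting these bounds, multiplying by $s^j$ and absorbing the $\eta$--terms into the dissipative term yields \eqref{lyapounovconrolbis}.

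\emph{Main obstacle.} The crux is producing the sign-definite term $-\mu\int(\e_y^2+\e^2)\varphi'_{i,B}$ in Step 2: this forces the precise compatibility between $\psi_B$ and $\varphi_{i,B}$ chosen in \eqref{defphi2}--\eqref{defphi1bis}, rests on a weighted positivity estimate for $L$, and requires checking that the polynomial growth of $\varphi_{i,B}$ on the right spoils no integral---which works because the sextic and gradient pieces all carry the bounded weight $\psi_B$, while the handful of terms that do grow are tamed by the $y^{10}$ reservoir of (BS3). The second delicate point, and the source of the restriction $\beta<\frac{11}{20}$, is the exact exponent of the tail error in (c): it must come out as $s^{-9+10\beta}$ so that, applied downstream with $j=\frac52$ and integrated over $[s_0,+\infty)$, the term $s^{j-9+10\beta}$ remains integrable precisely when $10\beta<\frac{11}2$, all other errors being the harmless $O(s^{j-4})$.
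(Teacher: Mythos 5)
Your proposal follows essentially the same route as the paper: differentiate $s^j\mathcal F_i$, insert \eqref{eqofeps}, extract the dissipative term from the localized virial estimate under the orthogonality conditions \eqref{ortho1}, treat the modulation and profile-error terms via Claim \ref{le:3.1}, handle the tail terms via Claim \ref{cl:one}, and prove \eqref{lowerbound} by expanding the sextic and absorbing the linear-in-$\e$ and background corrections. The structure, the mechanism producing the anomalous error (interpolation of the polynomially weighted $\int\e^2\varphi_{i,B}$ against the $y^{10}$ reservoir of (BS3)), and the resulting exponent $s^{j-9+10\beta}$ with its link to $\beta<\tfrac{11}{20}$ are all correct.

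The one substantive inaccuracy is the attribution of that critical error in item (c). The terms $-p_s\Yz$, $(5Q^4(p-q))_y$ and $W_y$ do \emph{not} generate the $s^{j-9+10\beta}$ contribution: each carries an exponentially localized factor ($\Yz\in\mathcal Y$, $Q^4$, or the $e^{-3|y|/4}$ gain of \eqref{cl.2}), or, for the genuinely nonlocal pieces of $W$, is tamed by $\|q\|_{L^\infty(y>0)}\lesssim s^{-1}$ and $\int|q_y|\varphi_{i,B}\lesssim s^{-2}$; they all end up in $\eta\int(\e_y^2+\e^2)\varphi'_{i,B}+Cs^{j-4}$. The terms that actually see the growing weight against $\e^2$ at $+\infty$ are the scaling term $2\lsl\int\Lambda\e\cdot\e\,\varphi_{i,B}$ (which after integration by parts produces $\int_{y>B/2}\e^2\,y\,\varphi'_{i,B}$) and the bookkeeping term $\tfrac js\mathcal F_i$ (which contains $\tfrac js\int_{y>B/2}\e^2\varphi_{i,B}$); it is these, estimated through \eqref{weightedonebis} by $s^{\frac{10}{9}\beta-1}\mathcal N_{2,\rm loc}^{8/9}$ and Young's inequality, that yield $s^{10\beta-9}$. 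This misidentification does not break the scheme, but if you wrote the proof out you would find the tail terms harmless and would still need to confront the virial/scaling terms on the right, so the bookkeeping should be redirected there.
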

See the proof of Lemma \ref{propasymtp} in Appendix \ref{appA}.
\\

\noindent \textbf{Proof of {\rm (BS3')}.}
From Lemma \ref{lemmatail}, (BS2) and (BS3), 
$$
 \frac d{ds} \left\{ \l^{10} \int \varphi_{10} \e^2 \right\}
\lesssim \l^{10}\left(\mathcal N_{1,\rm loc} + s^{-2}\right) 
\lesssim  s^{-10 \beta -2},
$$
so that by integration on $[s_0,s]$, and \eqref{A2},
$$
\l^{10}(s)\int \varphi_{10} \e^2(s) \leq
\l^{10}(s_0) \int \varphi_{10} \e^2(s_0) + Cs_0^{-1-10\beta}
\leq 2.
$$
By the properties of $\varphi_{10}$ and \eqref{A2}, we obtain
\be\label{aa3}
  \l^{10}(s) \int_{y>0}y^{10} \e^2(s,y) dy \leq 3.
\ee
Now, we apply Lemma \ref{propasymtp} with $j=\frac 52 $. We find by \eqref{lyapounovconrolbis}
and $\beta< \frac {11}{20}$,
\be
\label{neovneogbgeo}
\frac d{ds} \left[ s^{\frac 52} \mathcal F_i\right]
\lesssim s^{-\frac 32}  +
s^{-1-10 (\frac {11}{20}-\beta)}.
\ee
The initial smallness \fref{A2} ensures $$s_0^{10}|\mathcal F_i(0)|\lesssim 1$$ and thus the time integration of \fref{neovneogbgeo} on $[s_0,s^*]$ yields:
$$
\mathcal F_i (s)\lesssim s^{- \frac 52}  s_0^{-\delta}
$$
for some $\delta=\delta(\beta)>0$.
Using \eqref{lowerbound}, we conclude:
$$
\mathcal N_i(s) \leq s^{- \frac 52}    s_0^{-\delta} +s^{-4} \leq \frac 12 s^{- \frac 52}
$$
for $s_0$ large enough, which together with \eqref{aa3} and the control of the full $H^1$ norm through the conservation laws \fref{twobound}, \fref{energbound} concludes the proof of (BS3').


\subsection{Closing the estimates on  $(b,\l,x)$}

We now use the obtained bounds on $\e$ and the modulation equations on the geometrical parameters of Proposition \ref{le:2} to close the bounds on $(b,\l,x)$. We claim: for all $s\in [s_0,s^*]$,$$\leqno{\rm (BS2')}\quad 
|b(s)|\leq   {5} s^{-1},
\
\frac 1{5} s^{-\beta}\leq \l(s) \leq 5 s^{-\beta},
\  
\frac 1{5} s^{1-\beta} \leq (1-\beta) x(s) \leq 5 s^{1-\beta}.
$$
\\
\textbf{Proof of (BS2').}
First, note that from \eqref{fond}, \eqref{eq:2002bis}, and using (BS2)-(BS3), $\frac 13 \leq \beta\leq \frac 9{10}$, we have on 
$[s_0,s^*]$:
\be\label{dg}
\left| g_s \right| \lesssim s^{-\frac 9 4 + 2 \beta },
\ee
\be\label{dl}
\left| \lsl + b \right| \lesssim s^{-\frac 54},
\ee
\be\label{dx}
\left| \xsl -1 \right|\lesssim s^{-\frac 54}.
\ee

By (BS1) and (BS2), we have using $(1-\beta)(\theta-1) =\frac \beta 2$ the estimate:
\begin{align}
& \left| \l (s) -  \left(\frac 2 {\int Q} c_0 \frac {1}{\theta-1}\right)^2  {x^{-2 \theta+2}(s)} \right| \label{bbb2}\\ &=
\left| \l^{\frac 12} (s) + \frac 2 {\int Q} c_0 \frac {1}{\theta-1} {x^{-\theta+1}(s)}\right| \left|  \l^{\frac 12} (s) - \frac 2 {\int Q} c_0 \frac {1}{\theta-1} {x^{-\theta+1}(s)}\right|\nonumber \\
&\lesssim |f(s)| \left( \l^{\frac 12} (s)+x^{-\theta+1}(s)\right)
\lesssim s^{-\frac \beta 2 - \frac 1{10} } \left( s^{-\frac \beta 2} + s^{-(1-\beta)(\theta-1)}\right) \lesssim s^{-  \beta   - \frac 1{10}}.\nonumber
\end{align}
Using \eqref{dx}, we find
$$
\left| x_s (s) -  \left(\frac 2 {\int Q} c_0 \frac {1}{\theta-1}\right)^2  {x^{-2 \theta+2}(s)} \right| 
\lesssim   s^{-  \beta   - \frac 1{10}} + s^{-\frac 54 -   \beta }
\lesssim s^{-  \beta   - \frac 1{10}}.
$$
and hence\be
\label{vnkrvrnorn}
\left| \left(x^{\frac 1{1-\beta}}\right)_s - (1-\beta)^{-(1-\beta)} \right| \lesssim s^{-\frac 1{10}},
\ee
from $2\theta-1=\frac 1{1-\beta}$ 
and the choice of   $c_0$  in \fref{defcobeta} which gives:
$$
(1-\beta)^{-(1-\beta)}= (2\theta-1)\left(\frac 2 {\int Q} c_0 \frac {1}{\theta-1}\right)^2. 
$$

Since from \fref{initixzero}:
$$
 \frac {(1-\beta) }{s_0^{1-\beta}} x (s_0)=  1,$$
the time integration of \fref{vnkrvrnorn} on $[s_0,s]$ yields
$$
\left|x^{\frac 1{\beta-1}}(s) -  (1-\beta)^{-\frac 1{\beta-1}} s \right| \lesssim s^{1-\frac 1{10}}.
$$
Thus,     
 \be\label{aa2}
\left|  \frac {(1-\beta)}{s^{1-\beta}}  x (s) - 1\right| \lesssim s^{-\frac 1{10}}.
 \ee
 
 Inserting \eqref{aa2} into \eqref{bbb2}, we find for $\l$,
\be\label{aaa2}
\left|  s^{\beta} \l(s) -1\right|  \lesssim s^{-\frac 1{10}}.
\ee
Finally, using $|g(s)| \leq s^{-1 + 2 \beta - \frac 1{5}}$,
we find
\be\label{aaaa2}
\left| \frac {s}{\beta } b(s) -1\right|  \lesssim s^{-\frac 1{10}}.
\ee
From \eqref{aa2}, \eqref{aaa2} and \eqref{aaaa2}, (BS2') follows for $s_0$ large enough.

\subsection{Choice of $\l_0$ and $b_0$ by a topological argument} We now claim from a standard topological argument based on the ougoing behavior of the ODE's for $(f,g)$ that we can find $(b_0,\l_0)\in \mathcal D$ such that the remaining condition $(BS1)$ is closed.\\
Indeed, let $$
G(s) = g(s) s^{1-2 \beta + \frac 1{5}},\quad
F(s) = f(s) s^{\frac \beta 2+ \frac 1{10}},
$$
and
$$
H(s) = F^2(s) + G^2(s).
$$
From (BS2') and (BS3'), since $s^*=s^*(x_0,b_0)<+\infty$, it follows from a standard continuity argument that at $s=s^* \geq s_0,$
\be\label{sature}
H(s^*)=1.
\ee
We first claim the strict outgoing behavior:
\be\label{dFdG}
H'(s^*)\geq \frac {1}{20 s^*}.
\ee
\emph{Proof of \eqref{dFdG}.}
Since
$$
G'(s) = \left(1- 2 \beta +\frac 1{5}\right) g(s)s^{-2 \beta + \frac 1{5}}
+g'(s) s^{1-2 \beta + \frac 1{5}},
 $$
 we have using \fref{dFdG}:
 \be\label{gstar}
 G'(s^*)= \left(1- 2 \beta +\frac 1{5}\right) \frac {G(s^*)}{s^*}
 +O\left((s^*)^{-(1+\frac 1{20})}\right).
 \ee
 Similarly,
 $$
 F'(s^*)= \left(\frac \beta 2+ \frac 1{10}\right) f(s^*) (s^*)^{\frac \beta 2+ \frac 1{10}-1}
 + f'(s^*) (s^*)^{\frac \beta 2+ \frac 1{10}}.
 $$
 We now estimate $f'(s)$. By direct computations and then \eqref{dl}, \eqref{dx}
 and (BS1)-(BS2), 
 \begin{align*}
 f'(s) &= \frac 12 \frac {\l_s}{\l} \l^{\frac 12} - \frac 2{\int Q}c_0 
   x_s x^{-\theta}  \\
 & = \frac 12 \l^{\frac 52} \left[
 \frac b{\l^2}  - \frac 4{\int Q} c_0  \l^{-\frac 32} x^{-\theta}\right]
+O\left(s^{-\frac 54-\frac \beta 2}\right) \\
 &= \frac 12 \l^{\frac 52} g(s) + O\left(s^{-\frac 54-\frac \beta 2}\right) 
 = O\left(s^{-1  -\frac \beta 2 -\frac 1{5}}\right).
 \end{align*}
 Thus, 
 \be\label{fstar}
 F'(s^*)=\left(\frac \beta 2+ \frac 1{10}\right) \frac {F(s^*)} {s^*} + O\left( (s^*)^{-1-\frac 1{10}} \right). 
 \ee
 
 Therefore
 $$
 H'(s^*) = 2 F'(s^*)F(s^*) + 2 G'(s^*) G(s^*) 
 \geq \frac 1{10} \frac {H(s^*)}{s^*} + O\left((s^*)^{-1-\frac 1{20}}\right)
 \geq \frac 1{20s^*},
 $$
for $s_0$ large enough.

\medskip

By  standard arguments (see e.g. the proof of Lemma 6 in \cite{MMC}),
the strict outgoing behavior \fref{dFdG} ensures that the map $(\l_0,b_0)\in \mathcal D \to s^*(x_0,b_0)$
is continuous.
We  define the continuous maps
\begin{align*}
 \l_0(F_0) &= s_0^{-\beta}\left( 1 - F_0 s_0^{-\frac 1{10}} \right)^2
\\
  b_0(F_0,G_0) &= \beta s_0^{-1+\frac \beta 2} \l_0^{\frac 12}(F_0 ) \left( 1+ \frac 1\beta G_0 s_0^{\frac 32\beta - \frac 1{5}}  \l_0^{\frac 32} (F_0)
   \right)\\
  & = \beta s_0^{-1} \left| 1 - F_0 s_0^{-\frac 1{10}}\right|
  \left( 1+ \frac 1\beta G_0 s_0^{- \frac 1{5}}  \left| 1 - F_0 s_0^{-\frac 1{10}}\right|^3
   \right)
\end{align*}
so that
\begin{align*}
G_0 &= s_0^{1-2 \beta +\frac 1{5}} \left( \frac {b_0}{\l_0^2} 
- \frac 4{\int Q} c_0  \l_0^{-\frac 32} x_0^{-\theta}\right)\\
F_0 &= s_0^{\frac \beta 2 +\frac 1{10}} \left( \l_0^{\frac 12} 
+\frac 2{\int Q} c_0 \frac 1{\theta-1} x_0^{-\theta+1} \right).
\end{align*}

Now, consider the continous map
\begin{align*}
\mathcal M \ : \ \   \   \mathcal B_{\RR^2}  &\to  \mathcal S_{\RR^2} ,\\
   (F_0,G_0) &\mapsto \left(F(s^*(\l_0(F_0),b_0(F_0,G_0))), G(s^*(\l_0(F_0),b_0(F_0,G_0)))\right).
\end{align*}
where $\mathcal B_{\RR^2}$ and $\mathcal S_{\RR^2} $ are, respectively, the ball and  the sphere of $\RR^2$ of radius $1$. For $(F_0,G_0)\in \mathcal S_{\RR^2}$, we have
$\mathcal M(F_0,G_0) = (F_0,G_0)$, in other words, 
$\mathcal M$ is the identity on the sphere
$\mathcal S_{\RR^2}$. The existence of such a continuous map 
$\mathcal M$ is  in contradiction with Brouwer's fixed point theorem. Therefore, there exists $\l_0$ and $b_0$ such that
\be\label{estimation}
| \l_0 - s_0^{-\beta}| \leq s_0^{-\beta - \frac 1{10}},
\quad |b_0 - \beta s_0^{-1}| \leq s_0^{-1- \frac 1{10}},
\ee
and $s^*(\l_0,b_0)=+\infty$. 
In particular (BS1)-(BS2)-(BS3) hold on $[s_0,+\infty)$.\\
Finally, \eqref{aa2}, \eqref{aaa2} and \eqref{aaaa2} imply \eqref{sharp}.\\

This concludes the proof of Proposition \ref{PR:BS} and therefore also of Theorem \ref{th:1}.



\appendix
\section{Proof of Lemma \ref{le:qz}}\label{AA}

Recall that  $c_0\in \RR$ and $\theta>1$ are  fixed,  $x_0\gg 1$ is to be taken large enough and $\qz$ is the solution of 
\be\label{defqzA}
\partial_t \qz + \partial_x(\partial_x^2 \qz + \qz^5)=0,\quad
\qz(0,x)= f_0(x),
\ee
where the function $f_0$ is   smooth and satisfies
\be\label{deffzA}
f_0(x)= c_0 x^{-\theta} \hbox{ for $x>\frac {x_0}2$},\quad
f_0(x)=0 \hbox{ for $x<\frac {x_0}4$}, 
\ee
\be\label{estfzA}
\hbox{ for all $x\in \RR$, for all $k\geq 0$},\quad 
\left|\frac {d^{k}f_0}{dx^k} (x)\right| \lesssim c_0 |x|^{-\theta - k}.
\ee

First, for $x_0$ large enough, $\|f_0\|_{L^2}$ is small and it follows from the $L^2$ and $H^s$ Cauchy theory (Corollary 2.9 in \cite{KPV})
that $q_0$ is global and bounded in $H^s$ for all $s\geq 0$, with
$$\sup_t\|q_0(t)\|_{H^s}\lesssim \delta(x_0^{-1}).$$

We define
$$
\qu(t,x)  = \qz(t,x) -f_0(x),
$$
\be\label{defquA}
\partial_t \qu + \partial_x(\partial_x^2 \qu + (\qu+f_0)^5-f_0^5)=F_0,\quad
\qu(0,x)= 0, 
\ee
where
$$
F_0 = -\partial_x^3 f_0 - \partial_x(f_0^5).
$$

For any $\bar \theta\geq 0$, define a smooth function $\varphi_{\bar \theta}$ such that
\begin{equation}\label{dfphiu}
\varphi_{\bar \theta}(x) = x^{\bar \theta}  \text{ for $x\geq 4$},\quad
\varphi_{\bar \theta}(x) =  e^{\frac x8}   \text{ for $x\leq 0$},\quad
\varphi'\geq 0,\  \varphi'''\leq \frac 14 \varphi' \text{ on $\RR$}.
\end{equation}
For 
$$
0\leq \theta_1< 2 \theta+4, \quad \theta_1 \neq 2 \theta+3,
$$
    set
\begin{align*}
M_{\theta_1}(t)& = \int q_1^2(t) \varphi_{\theta_1}\left( x - \frac t 4 - \frac {x_0} 4\right) dx,
\\
E_{\theta_1}(t) &= \int \left( (\partial_x q_1)^2(t) -\frac 13 \left( (q_1+f_0)^6 - f_0^6 - 6 q_1 f_0^5\right)   \right)\varphi_{\theta_1+2}\left( x - \frac t 4 - \frac {x_0} 4 \right) dx,\\\
F_{\theta_1,k}(t) &= \int (\partial_x^k q_1)^2(t) \varphi_{\theta_1+2k}\left( x - \frac t 4 - \frac {x_0} 4\right) dx, \ k\geq 2.
\end{align*}

We differentiate $M_{\theta_1}(t)$ (omitting the variable $x - \frac t 4 - \frac {x_0} 4$ for the function $\varphi_{\theta_1}$):
\begin{align*}
M_{\theta_1}'(t) & = - 3 \int (\partial_x q_1)^2 \varphi_{\theta_1}' -\frac 14 \int q_1^2 \varphi_{\theta_1}'
+ \int q_1^2 \varphi_{\theta_1}'''  \\ &
+ 2 \int \left((q_1+f_0)^5 - f_0^5\right) (q_1 \varphi_{\theta_1})_x + 
2 \left|\int F_0 q_1 \varphi_{\theta_1} \right|\\
& \leq - 3 \int (\partial_x q_1)^2 \varphi_{\theta_1}' -\frac 3{16} \int q_1^2 \varphi_{\theta_1}'
- 2 \int \left[ \frac {(q_1+f_0)^6}6 - \frac   {f_0^6} 6-(q_1+f_0)^5 q_1\right] \varphi_{\theta_1}'
\\ & + 2 \int \left[ (q_1+f_0)^5 - 5 q_1^4 f_0-f_0^5\right] f_0' \varphi_{\theta_1}
+ 2 \left|\int F_0^2  \frac {\varphi_{\theta_1}^2}{\varphi_{\theta_1}'} \right|^{\frac 12}
 \left| \int q_1^2 \varphi_{\theta_1}'\right|^{\frac 12} \\
& \leq - 3 \int (\partial_x q_1)^2 \varphi_{\theta_1}' -\frac 1 8 \int q_1^2 \varphi_{\theta_1}' 
+\delta(x_0^{-1}) \int q_1^2 \varphi_{\theta_1}'   + C \int F_0^2  \frac {\varphi_{\theta_1}^2}{\varphi_{\theta_1}'}     .
\end{align*}
Thus, for $x_0$ large enough, we have obtained
$$
M_{\theta_1}'(t) + \frac 1{10}  \int \left[ (\partial_x q_1)^2 + q_1^2 \right] \varphi_{\theta_1}'
\lesssim \int F_0^2  \frac {\varphi_{\theta_1}^2}{\varphi_{\theta_1}'}   .
$$
For $x_0$ large enough,
\begin{align}
   \int F_0^2  \frac {\varphi_{\theta_1}^2} {\varphi_{\theta_1}'}\left( x - \frac t 4 - \frac {x_0} 4\right)
& \lesssim    \int_{x>\frac t4 + \frac {x_0} 4 + 4} x^{-2(\theta+3)} \left( x - \frac t 4 - \frac {x_0} 4 \right)^{\theta_1+1}  \nonumber   \\
&  + \int_{\frac t8 + \frac {x_0} 8 < x<\frac t4 + \frac {x_0} 4 + 4}   x^{-2(\theta+3)}
+\int_{x< \frac t8 + \frac {x_0} 8} e^{\frac 18 (x-\frac t 4 - \frac {x_0} 4 )}\nonumber
\\ & \lesssim   (t+x_0)^{ \theta_1 -2 \theta-4 } . \label{Fzero}
\end{align}
Therefore, using also $M_{\theta_1}(0)=0$, we find by integration:
\be\label{eqM}
M_{\theta_1}(t) +  \int_0^t\int \left[ (\partial_x q_1)^2 + q_1^2 \right] \varphi_{\theta_1}' \lesssim 
\begin{cases} (t+x_0)^{ \theta_1 -2 \theta-3 } & \hbox{if $0<\theta_1 {-}2 \theta{-}3<1$}  \\
  x_0^{ \theta_1 -2 \theta-3 } & \hbox{if $\theta_1 {-}2 \theta{-}3<0$} \end{cases} \ee

We argue similarly for $E_{\theta_1}(t)$. 
\begin{align*}
E_{\theta_1}'(t) & = 2 \int \partial_t q_1 \left[ - \partial_x^2 q_1 - \left((q_1+f_0)^5 - f_0^5\right)\right] \varphi_{\theta_1+2}\\
&  - 2 \int \partial_t q_1 \partial_x q_1 \varphi_{\theta_1+2}'
- \frac  14 \int\left[ (\partial_x q_1)^2(t) -\frac 13 \left( (q_1+f_0)^6 - f_0^6 - 6 q_1 f_0^5\right)   \right]\varphi_{\theta_1+2}'
\\ &= - \int \left[ \partial_x^2 q_1 + \left((q_1+f_0)^5 - f_0^5\right) \right]^2 \varphi_{\theta_1+2}'\\
& - 2 \int \left[ \partial_x^2 q_1 + (q_1 + f_0)^5 - f_0^5\right] F_0\varphi_{\theta_1+2}\\
& + 2 \int \left[ \partial_x^2 q_1 + (q_1 + f_0)^5 - f_0^5\right]_x \partial_x q_1 \varphi_{\theta_1+2}'
  - 2 \int F_0 \partial_x q_1 \varphi_{\theta_1+2}' 
\\ &- \frac  14 \int\left[ (\partial_x q_1)^2(t) -\frac 13 \left( (q_1+f_0)^6 - f_0^6 - 6 q_1 f_0^5\right)   \right]\varphi_{\theta_1+2}'.
\end{align*}
We use the following computations and estimates
\begin{align*}
&\left| \int \left[ \partial_x^2 q_1 + (q_1 + f_0)^5 - f_0^5\right] F_0\varphi_{\theta_1+2} \right| \\
& \lesssim 
\left|  \int \partial_x q_1 (F_0 \varphi_{\theta_1+2})_x \right| + \left| \int \left[  (q_1 + f_0)^5 - f_0^5\right] F_0\varphi_{\theta_1+2}\right| \\
& \lesssim \frac 1{100} \int   (\partial_x q_1)^2  \varphi_{\theta_1+2}'
+\frac 1{100} \int    q_1^2  \varphi_{\theta_1}'
\\ & + C \int \left(|\partial_x F_0|^2 \frac {\varphi_{\theta_1+2}^2}{\varphi_{\theta_1+2}'}
+ |F_0|^2 \varphi_{\theta_1+2}' + |F_0|^2 \frac {f_0^8 \varphi_{\theta_1+2}^2}{\varphi_{\theta_1}'}\right).
\end{align*}
\begin{align*}
2 \int \partial_x^3 q_1 \partial_x q_1 \varphi_{\theta_1+2}' 
& = - 2 \int (\partial_x^2 q_1)^2 \varphi_{\theta_1+2}' + \int (\partial_x q_1)^2 \varphi_{\theta_1+2}'''\\
& \leq - 2 \int (\partial_x^2 q_1)^2 \varphi_{\theta_1+2}' + \frac 14 \int (\partial_x q_1)^2 \varphi_{\theta_1+2}'.
\end{align*}
\begin{align*}
& \left| 2 \int \left[ (q_1+f_0)^5 - f_0^5\right]_x \partial_x q_1 \varphi_{\theta_1+2}' \right|\\
& =\left| 10 \int (\partial_x q_1)^2 (q_1+f_0)^4 \varphi_{\theta_1+2}'
+10 \int \left( (q_1+f_0)^4 - f_0^4\right) \partial_x f_0  \partial_x q_1 \varphi_{\theta_1+2}'\right|\\
& \leq \frac 1{100} \int (\partial_x q_1)^2 \varphi_{\theta_1+1}' + C \int q_1^2 \varphi_{\theta_1}'.
\end{align*}
\begin{align*}
2 \left| \int F_0 \partial_x q_1 \varphi_{\theta_1+2}'\right| \leq
\frac 1{100} \int (\partial_x q_1)^2 \varphi_{\theta_1+1}' + C \int |F_0|^2\varphi_{\theta_1+2}'.
\end{align*}
Combining these estimates, and using the expression of $F_0$ as in \eqref{Fzero}, we find
\begin{align*}
E_{\theta_1}'(t) \leq - \int (\partial_x^2 q_1)^2 \varphi_{\theta_1+2}' - \frac 1{10} \int (\partial_x q_1)^2 \varphi_{\theta_1+2}' 
+ C\int q_1^2 \varphi_{\theta_1}' + C  (t+x_0)^{ \theta_1 -2 \theta-4 }.
\end{align*}
By integration, and using \eqref{eqM},
\begin{align}
& E_{\theta_1}(t) + \frac 1{10} \int_0^t \int \left[(\partial_x^2 q_1)^2 + (\partial_x q_1)^2 \right]\varphi_{\theta_1+2}'\nonumber \\
& \lesssim \begin{cases} (t+x_0)^{ \theta_1 -2 \theta-3 } & \hbox{if $0<\theta_1 {-}2 \theta{-}3<1$}  \\
  x_0^{ \theta_1 -2 \theta-3 } & \hbox{if $\theta_1 {-}2 \theta{-}3<0$} \end{cases}\label{eqE}
\end{align}
  
We look for an estimate on $\partial_x q_1(t)$ from the above estimate on $E_{\theta_1}(t)$.
Note first that
\begin{align}
\|q_1^2 \sqrt{\varphi_{\theta_1+2}} \|_{L^\infty} &\lesssim
\int_x^{+\infty} |q_1| |\partial_x q_1| \sqrt{\varphi_{\theta_1+2}}
+ \int_x^{+\infty} |q_1|^2 \frac {\varphi_{\theta_1+2}'}{\sqrt{\varphi_{\theta_1+2}}}\nonumber\\
& \lesssim \left( \int q_1^2\right)^{\frac 12} \left( \int |\partial_x q_1|^2  \varphi_{\theta_1+2}
+ \int |q_1|^2 \varphi_{\theta_1}\right)^{\frac 12},\label{Linfty}
\end{align}
so that
\begin{align*}
\int q_1^6 \varphi_{\theta_1+2} \leq \|q_1^2 \sqrt{\varphi_{\theta_1+2}} \|_{L^\infty}^2
\int q_1^2 \lesssim 
\left( \int q_1^2\right)^2 \left( \int |\partial_x q_1|^2  \varphi_{\theta_1+2}
+ M_{\theta_1}(t)\right) .
\end{align*}
Also,
$$
\int q_1^2 f_0^4 \varphi_{\theta_1+2} \lesssim \int q_1^2 \varphi_{\theta_1} = M_{\theta_1}(t).
$$
Thus,
\begin{align}
E_{\theta_1}(t) & \geq \int (\partial_x q_1)^2 \varphi_{\theta_1+2} - C \int (q_1^6 + q_1^2 f_0^4) \varphi_{\theta_1+2}\nonumber\\
& \geq \frac 12 \int (\partial_x q_1)^2 \varphi_{\theta_1+2} - C M_{\theta_1}(t),\label{eq6}
\end{align}
and so
\begin{align}
&\int (\partial_x q_1)^2 \varphi_{\theta_1+2}+ \int_0^t \int \left[(\partial_x^2 q_1)^2 + (\partial_x q_1)^2 \right]\varphi_{\theta_1+2}'\nonumber \\
&\lesssim \begin{cases} (t+x_0)^{ \theta_1 -2 \theta-3 } & \hbox{if $0<\theta_1 {-}2 \theta{-}3<1$}   \\
  x_0^{ \theta_1 -2 \theta-3 } & \hbox{if $\theta_1 {-}2 \theta{-}3<0$} \end{cases} \label{eqEbb}\end{align}
 
Note also that for  any $x$,
\begin{align}
  q_1^2(t,x) \varphi_{\theta_1+1}\left( x - \frac t4-\frac {x_0} 4\right) 
& \lesssim \int_x^{+\infty}  |q_1| |\partial_x q_1|  \varphi_{\theta_1+1} 
+\int_x^{+\infty} |q_1|^2 \varphi_{\theta_1+1}' \nonumber \\
& \lesssim 
\int    |\partial_x q_1|^2  \varphi_{\theta_1+2} 
+\int  |q_1|^2 \varphi_{\theta_1} ,
\label{linfty}\end{align}
and, with $\theta_1 = 2 \theta+ \frac {15}4$,
using the properties of $\varphi_{\theta_1+1}$, for $x>\frac 12 (t+x_0)$, 
\be\label{A11}
|q_1(t,x)| \lesssim x^{- (\frac {\theta_1}2 +\frac 12 )} t^{\frac 38} =x^{- (\theta+\frac {19}8)} t^{\frac 38}
  \lesssim   x^{- (\theta+2)}.
\ee

Finally, we briefly treat the case of higher order derivatives.
We use an induction argument, assuming at the rank $k$  that for all
$1\leq k'<k$,  for all $x$, $t$
\begin{align}\nonumber
& \int_0^t \int (\partial_x^{k'} q_1)^2(t) \varphi_{\theta_1+2k'}' \left( x - \frac t4-\frac {x_0} 4\right) dt
+
\int (\partial_x^{k'} q_1(t))^2 \varphi_{\theta_1+2k'}\left( x - \frac t4-\frac {x_0} 4\right) \\ &\label{ply2}
+|\partial_x^{k'-1} q_1(t)|^2 \varphi_{\theta_1+2k'-1}\left( x - \frac t4-\frac {x_0} 4\right) 
  \lesssim \begin{cases} (t+x_0)^{ \theta_1 -2 \theta-3 } & \hbox{if $0<\theta_1 {-}2 \theta{-}3<1$}  \\
  x_0^{ \theta_1 -2 \theta-3 } & \hbox{if $\theta_1 {-}2 \theta{-}3<0$} \end{cases}
\end{align}
we prove the same estimates for $k'=k$ using $F_{\theta_1,k}$.

Indeed, by simple computation:
\begin{align*}
F_{\theta_1,k}'(t) & = 2 \int (\partial_x^k  q_1)_t (\partial_x^k q_1) \varphi_{\theta_1+2k} -
\frac 12 \int (\partial_x^k q_1)^2 \varphi_{\theta_1+2k}'\\
& = - 2 \int (\partial_x^{k+3} q_1) (\partial_x^k q_1) \varphi_{\theta_1+2k} 
- 2 \int \partial_x^{k+1} \left( (q_1+f_0)^5 -f_0^5\right) \partial_x^k q_1 \varphi_{\theta_1+2k}\\
& - \frac 12 \int (\partial_x^k q_1)^2 \varphi_{\theta_1+2k}'+ \int (\partial_x^k F_0) (\partial_x^k q_1) \varphi_{\theta_1+2k} \\
& \leq - 3 \int (\partial_x^{k+1} q_1)^2 \varphi_{\theta_1+2k}' - \frac 14 \int (\partial_x^k q_1)^2 \varphi_{\theta_1+2k}'\\
& + 2 \int \partial_x^k \left( (q_1+f_0)^5 - f_0^5\right) 
\left( (\partial_x^{k+1} q_1) \varphi_{\theta_1+2k} + (\partial_x^k q_1) \varphi_{\theta_1+2k}'\right)
\\ & + \int (\partial_x^k F_0) (\partial_x^k q_1) \varphi_{\theta_1+2k}.
\end{align*}
We claim, arguing as \eqref{Fzero},
$$
\left| \int (\partial_x^k F_0) (\partial_x^k q_1) \varphi_{\theta_1+2k} \right| 
\leq \frac 1{100}\int (\partial_x^k q_1)^2  \varphi_{\theta_1+2k}'+ C t^{\theta_1-2 \theta -4}.
$$
Next, we claim
\begin{align}
& \left|\int \partial_x^k \left( (q_1+f_0)^5 - f_0^5\right) 
\left( (\partial_x^{k+1} q_1) \varphi_{\theta_1+2k} + (\partial_x^k q_1) \varphi_{\theta_1+2k}'\right)\right|\nonumber \\
& \leq \frac 1{100}\int (\partial_x^{k+1} q_1) \varphi_{\theta_1+2k}'+\frac 1{100}\int (\partial_x^{k} q_1) \varphi_{\theta_1+2k}'+\sum_{k'=0}^{k-1} \int (\partial_x^{k'} q_1)^2  \varphi_{\theta_1+2k'}' .\label{ply3}
\end{align}

Indeed, looking for example at the purely nonlinear term in $q_1$, we have
\begin{align*}
&\left|\int  \partial_x^k (q_1^5)  (\partial_x^{k+1} q_1) \varphi_{\theta_1+2k} \right|
+ \left|\int  \partial_x^k (q_1^5)  (\partial_x^{k} q_1) \varphi_{\theta_1+2k}' \right|
\\
&\leq C \int \left(\partial_x^k (q_1^5)\right)^2 \frac {\varphi_{\theta_1+2k}^2}{\varphi_{\theta_1+2k}'}
+ \frac 1{200}\int (\partial_x^{k} q_1)^2 \varphi_{\theta_1+2k}'+\frac 1{200} \int (\partial_x^{k+1} q_1)^2  \varphi_{\theta_1+2k}' ,
\end{align*}
and
\begin{align*}
& \int \left(\partial_x^k (q_1^5)\right)^2 \frac {\varphi_{\theta_1+2k}^2}{\varphi_{\theta_1+2k}'}
\lesssim
\int \left(\partial_x^k (q_1^5)\right)^2 \varphi_{\theta_1+ 2 k_1+2k_2+2k_3+2k_4+2k_5+1}\\
& \lesssim \sum_{ \genfrac{}{}{0pt}{2}{k_1+k_2+k_3+k_4+k_5=k}{k_5\geq k_4\geq k_3\geq k_2\geq k_1}
} \left( \Pi_{l=1}^3 \| (\partial_x^{k_l} q_1 )^2 {\varphi_{2k_l+2}} \|_{L^\infty} \right) 
\| (\partial_x^{k_4} q_1 )^2 {\varphi_{2k_4-2}} \|_{L^\infty}\int |\partial_x^{k_5} q_1|^2\varphi'_{\theta_1+2k_5-1} 
\\
& \lesssim \delta(x_0^{-1})\sum_{k'=0}^k \int (\partial_x^{k'} q_1)^2  \varphi_{\theta_1+2k'-1}'.  \end{align*}
where the $L^\infty$ norms above are estimated using \eqref{ply2}.
The other terms, all containing $f_0$, are similar and easier.

By integration of $F'_k$ using \eqref{ply3} and \eqref{ply2}, we obtain
$$
F_{\theta_1,k}(t) + \int_0^t \int (\partial_x^{k+1} q_1)^2 \varphi_{\theta_1+2k}' \lesssim  
\begin{cases} (t+x_0)^{ \theta_1 -2 \theta-3 } & \hbox{if $0<\theta_1 {-}2 \theta{-}3<1$}  \\
  x_0^{ \theta_1 -2 \theta-3 } & \hbox{if $\theta_1 {-}2 \theta{-}3<0$} \end{cases} 
$$
Arguing as in the proof of \eqref{linfty}, we prove \eqref{ply2}   for $k'=k$.
The induction argument being complete, we finish the proof as in \eqref{A11}.

\section{Proof of monotonicity results on $\e$}\label{appA}

\subsection{Proof of Lemma \ref{lemmatail}} 
We compute from \fref{eqofeps}:
\begin{align*}
&\frac 12\frac{d}{ds}\int\varphi_{10}\e^2  =   \int \e_s \e \varphi_{10}\\
&=\int\varphi_{10}\e\bigg[\lsl\Lambda \e+\left(-\varepsilon_{yy} + \varepsilon - (\varepsilon +p\Yz +Q_b )^5 + Q_b ^5 +p5 Q^4 \Yz\right)_y      \\
   &      - p_s \Yz + (5Q^4(p-q))_y +\lsl p \Lambda \Yz+\left(\frac {{\lambda}_{s}}{{\lambda}}+{b}\right) {\Lambda} Q_b
+ \left(\frac { x_{{s}}}{\lambda} -1\right) (Q_b  + \varepsilon +p\Yz)_y \\
& + \Phi_{{b}} + \Psi_{{b}} -W_y\bigg].
\end{align*}
We integrate by parts the linear term and use $y\varphi_{10}'={10}\varphi_{10}$ for $y\geq 1$ and $\varphi_{10}'''\ll \varphi_{10}'$ for $y$ large enough to derive the bound
\bee
&&\int\varphi_{10}\e\left[\lsl\Lambda \e+\left(-\varepsilon_{yy} + \varepsilon\right)_y\right] 
\\& = & -\frac12\lsl\int y\varphi_{10}'\e^2-\frac 32\int\varphi_{10}'\e_y^2-\frac12\int\varphi'_{10}\e^2+\frac 12\int\varphi_{10}'''\e^2\\
& \leq & -\frac{10}2\lsl\int \varphi_{10}\e^2-\frac 14\int\varphi_{10}'(\e_y^2+\e^2)+C\mathcal N_{1,\rm loc}.
\eee

 By integration by parts in the nonlinear term, we can remove all derivatives on $\e$ to obtain
 (using $|Q_b|+|(Q_b)_y|\leq Ce^{-\frac 12 {y}}$ for $y>0$) 
\bee
 \left|\int\varphi_{10}\e \left[(\varepsilon +Q_b )^5 -Q_b ^5\right]_y\right| 
&\lesssim & \int_{y>0} \varphi_{10}e^{-\frac 12 {y}}\e^2(|\e|^3 +1) +\int\varphi_{10}'\e^6\\
&  \lesssim &  \int_{y>0} e^{-\frac 14 {y}} \e^2(|\e|^3 +1)+\int\varphi_{10}'\e^6\eee
Thus, by standard Sobolev estimates,
\bee
 \left|\int\varphi_{10}\e \left[(\varepsilon +Q_b )^5 -Q_b ^5\right]_y\right|  
 \lesssim \mathcal N_{1,\rm loc} +\delta(\alpha^*)\int\varphi'_{10}(\e_y^2+\e^2).
\eee

Next, by the bootstrap estimates,
\begin{align*}
& \left| \int \varphi_{10} \e \left[ (\e + Q_b +p\Yz)^5 - 5 p Q^4 \Yz - (\e+Q_b)^5\right]_y\right| \\
& \lesssim \int_{y>10}   \e   e^{-\frac 12 y}\left(p^2 + |b||p| + |p| (|\e|+|\e|^5 \right)
\lesssim \mathcal N_{1,\rm loc} +\frac 1{s^2}.
\end{align*}

By \eqref{BSbsps} and $\Yz \in \mathcal{Y}$,
$$
\left| p_s  \int \Yz \varphi_{10} \e\right| \lesssim 
\mathcal N_{1,\rm loc} +\frac 1{s^2}.
$$

By \eqref{cl.2} and \eqref{BSp},
\begin{align*}
& \left| \int 5 (Q^4 (p-q))_y  \e \varphi_{10}\right|
=\left| \int 5 (p-q) e^{-4 |y|} (\e \varphi_{10})_y \right|
\\ &\leq C \frac 1{s^2}  \int (|\e_y|+|\e|) e^{-|y|}\varphi_{10}
\leq \frac 1{100} \int \e_y^2  \varphi'_{10} + C \mathcal N_{1,\rm loc} +\frac C{s^2}.
\end{align*}

By \eqref{BSparam},
$$
\left|\lsl p  \int \Lambda \Yz \varphi_{10} \e\right|
\lesssim \mathcal N_{1,\rm loc} +\frac 1{s^2}.
$$

The terms involving the geometrical parameters are controlled from the exponential localization of $Q_b$ on the right and \fref{BSlsl}--\fref{BSparam}:
$$\left|\lsl+b\right|\left|\int\varphi_{10}\e(\Lambda Q_b)\right|\lesssim \left(\frac 1{s^2} +\mathcal N_{1,\rm loc}^{\frac12}\right)\mathcal N_{i,\rm loc}^{\frac12}\lesssim \mathcal N_{1,\rm loc}+\frac 1{s^2},$$
\begin{align*}\left|\xsl-1\right|\left|\int\varphi_{10}\e(Q_b+\e+p\Yz)_y\right|&\lesssim \left(\frac 1{s^2}+\mathcal N_{1,\rm loc}^{\frac12}\right)\left[\mathcal N_{1,\rm loc}^{\frac12}+\int\varphi_{10}'\e^2\right]\\ &\lesssim \mathcal N_{1,\rm loc}+\frac 1{s^2}+\delta(\alpha^*) \int\varphi_{10}'\e^2,\end{align*}
$$\int\left|\varphi_{10}\e\Phi_b\right|\lesssim |b_s| \mathcal N_{1,\rm loc}^{\frac 12}\lesssim \frac 1{s^2} +\mathcal N_{1,\rm loc}.$$
We control similarily the interaction with the error from \fref{eq:202}: $$\int\left|\varphi_{10}\e\Psi_b\right|\lesssim \frac 1{s^2} \mathcal N_{1,\rm loc}^{\frac 12}\lesssim \frac 1{s^2}+\mathcal N_{1,\rm loc}.$$

Finally, we claim
\be\label{Wdix}
\left| \int \varphi_{10} \e W_y\right|
\leq C \mathcal N_{1,\rm loc}+\frac C{s^2}  +  \frac 1{50} \int \e^2 \varphi_{10}'
\ee
We only treat the first term in $W$, the other terms are similar and easier.
First,  integrating by parts, we remove the derivative from $\e$ to obtain derivative on $q$, $\varphi_{10}$.
Indeed,
\begin{align*}
& - \int \left[ 5 (Q_b+\e+p\Yz )^4 q - 5 Q^4 q\right]_y \e \varphi_{10}  = \int \left[ 5 (Q_b+\e+p\Yz )^4 q - 5 Q^4 q\right]  (\e \varphi_{10})_y\\
& =\int \bigg\{\left[\left(Q_b+\e+p\Yz\right)^5- (Q_b+p\Yz)^5 - 5Q^4 \e\right]_y
\\
&- 5(Q_b+p\Yz)_y\left[(Q_b+\e+p\Yz)^4-(Q_b+p\Yz)^4\right] + 20 Q^3 Q_y \e \bigg\}q\varphi_{10}  
\\ &+ 5\int \left[   (Q_b+\e+p\Yz )^4   -   Q^4 \right]   \e q\varphi_{10}' 
\\
& = - \int \left[\left(Q_b+\e+p\Yz\right)^5-(Q_b+p\Yz)^5 - 5Q^4 \e\right] (q_y\varphi_{10}+q \varphi_{10}') 
\\
&- 5\int \left\{(Q_b+p\Yz)_y \left[(Q_b+\e+p\Yz)^4-(Q_b+p\Yz)^4\right] - 4 Q^3 Q_y \e \right\}  q\varphi_{10} 
\\ & + 5 \int \left[   (Q_b+\e+p\Yz )^4  -   Q^4  \right]   \e q \varphi_{10}'
.
\end{align*}
From the above expression, we obtain for $s$ large enough (using $\|\e\|_{L^\infty} \lesssim \|\e\|_{H^1}\lesssim 
\delta(\alpha^*)$ and $\|q\|_{L^\infty(y>0)} \lesssim \frac 1s$)
\begin{align*}
&\left| \int \left[ 5 (Q_b+\e+p\Yz )^4 q - 5 Q^4 q\right]_y \e \varphi_{10} \right|\\
& \lesssim 
\int (|q|+|q_y|) e^{-\frac {|y|}2} (|\e|+|p|+|b|)|\e|
+\int (|q_y|\varphi_{10} + |q|\varphi_{10}') |\e|^5 \\
& \leq  
C \mathcal N_{1,\rm loc}+\frac C{s^2}  +  \frac 1{100} \int \e^2 \varphi_{10}'+\delta(\alpha^*)\int   |\e|^2|q_y|\varphi_{10} .
\end{align*}
To control the last term above, we use
$q_y(t,y)=\l^{\frac 32} \partial_x q_0(t,\l y + x(t)) $ so that 
$
|q_y(t,y)|\lesssim \l^{\frac 32}  (\l y +x(t))^{-\theta-1} 
\lesssim \l^{\frac 12} x^{-\theta}(t) (y+1)^{-1},
$ for $y>10$,
and
$$
\int |\e|^2|q_y|\varphi_{10} \lesssim
\frac {\l^{\frac 12}}{x^{\theta}} \int_{y>0} \e^2 (y+1)^{-1} \varphi_{10}
\leq  \frac 1{100} \int \e^2 \varphi_{10}'.
$$
The collection of above estimates yields the bound: 
$$\frac{d}{ds}\int\varphi_{10}\e^2+{10}\lsl\int\varphi_{10}\e^2\lesssim \mathcal N_{1,\rm loc}+\frac 1{s^2},$$ and \fref{keyestimate} is proved.

\subsection{Proof of Lemma  \ref{propasymtp}}

 {\bf step 1} Weighted $L^2$ controls at the right.
 
We first recall from \cite{MMR1}, proof of Proposition 3.1, the following controls for all $s\in [0,s_0]$,
\be
\label{weightedone}
\int_{y>0}y\e^2(s)\lesssim \left(1+\frac{1}{\l^{\frac{10}9}(s)}\right)\mathcal{N}_{1,\rm loc}^{\frac 89}(s),
\ee
\be
\label{weightedonebis}
\int_{y>0}y^2\e^2(s)\lesssim \left(1+\frac{1}{\l^{\frac {10}9}(s)}\right)\mathcal{N}_{2,\rm loc}^{\frac 89}(s),
\ee
\be
\label{lonebound}
\int_{y>0}|\e(s)|\lesssim \mathcal N_2^{\frac{1}{2}}(s).
\ee
 \medskip

{\bf step 2} Algebraic computations on $\mathcal F_{i}$.
 
First, note that the equation of $\e$ \eqref{eqofeps} can be rewritten as follows:
\begin{align}  \nonumber  
  \partial_s \varepsilon -\lsl  \Lambda \e & = \left(-\partial_{y}^2 \e  + \e  - \zz\right)_y
\\ & -p_s \Yz +(  5 Q^4 (p-q) )_y +\lsl p\Lambda \Yz
 + \left(\frac {{\lambda}_{s}}{{\lambda}}+{b}\right) {\Lambda} Q_b 
\nonumber \\
&+ \left(\frac { x_{s}}{\lambda} -1\right) (Q_b + \varepsilon+p  \Yz)_y   
+ \Phi_{b}  
 + \Psi_{{b}} ,
\label{eqebis}
\end{align}
where
$$
\zz=(Q_b+p\Yz+q+ \varepsilon )^5 -Q_b^5 - 5Q^4 (p\Yz+q)-q^5,
$$
$$\Phi_b = - {b}_{s} \left(\chi_b   + \gamma   y (\chi_b)_y\right) P,\quad  -\Psi_b=\left(Q_b''- Q_b+ Q_b^5\right)'+b {\Lambda} Q_b.$$

We compute
\begin{align*}
  s^{-j} \frac d{d{s}}\left[ s^j {\cal F_{i}} \right]  &=
 2\int \psi_B(\varepsilon_y)_s \varepsilon_y\\
 & + 2\int \varepsilon_s \left[ \varepsilon \varphi_{i,B} - \psi_B Z\right]\\
&  -2 \int \psi_B(Q_b)_s \left[(\varepsilon+Q_b+p\Yz+q)^5 - (Q_b+p\Yz+q)^5 - 5 \varepsilon Q_b^4\right]\\
& - 2 \int \psi_B (p_s \Yz+ q_s) \left[(\varepsilon+Q_b+p\Yz+q)^5 -(Q_b+p\Yz+q)^5  - 5 \varepsilon Q^4\right]
\\& + 10 \int \psi_B q_s q^4 \e + \frac j s \mathcal F_{i}
\end{align*}
which we rewrite 
\be
\label{eqfnoneoge}
 s^{-j} \frac d{d{s}}\left[ s^j {\cal F_{i}} \right]  =  f^{(i)}_{1}+f^{(i)}_2+f^{(i)}_3,
 \ee
  where 
{\allowdisplaybreaks
\begin{align*}
  f^{(i)}_{1} &=
      2 \int \left( \varepsilon_{s} - {\frac{{\lambda}_s}{{\lambda}}} {\Lambda} \varepsilon\right) \left( - (\psi_B\e_y)_{y} + \varepsilon \varphi_{i,B} -\psi_BZ\right) ,\\
f^{(i,j)}_2  &= 
 2 {\frac{{\lambda}_s}{{\lambda}}}  \int    {\Lambda} \varepsilon  \left( - (\psi_B\varepsilon_y)_y +  \varepsilon\varphi_{i,B} -\psi_B Z \right) +\frac js\mathcal F_{i}, \\
 f^{(i)}_3 &= 
  - 2  \int \psi_B(Q_b)_{s} \left[(\varepsilon+Q_b+p\Yz+q)^5 - (Q_b+p\Yz+q)^5 - 5 \varepsilon Q_b^4\right]\\
& - 2 \int \psi_B (p_s \Yz+ q_s) \left[(\varepsilon+Q_b+p\Yz+q)^5  -(Q_b+p\Yz+q)^5 - 5 \varepsilon Q^4\right]\\
&+ 10 \int \psi_B q_s q^4 \e.
\end{align*}
}

We claim the following estimates on the above terms: for some $\mu_0>0$,  
\begin{align}
\label{re1}
& \frac{d }{ds} f^{(i)}_{1}\leq - \mu_0  \int   \left(\varepsilon_y^2+\e^2\right)   \varphi'_{i,B}  + 
 C s^{-4},
\\  
\label{re2}
& \left|\frac{d }{ds} f^{(i)}_{k}\right|\leq \frac {\mu_0}{10}    \int   \left(\varepsilon_y^2+\e^2\right)   \varphi'_{i,B}  +  C s^{-4}+  C s^{10\beta -9}, \quad \hbox{for $k=2,3.$}
\end{align}
  Inserting \eqref{re1} and \eqref{re2} into \fref{eqfnoneoge}   yields \fref{lyapounovconrolbis}
  for all $j$.
In steps 3 - step 5, we prove \eqref{re1} and \eqref{re2}.

Observe that the  definitions of $\varphi_i$ and $\psi$ imply the following estimates:
\begin{align} \label{defphi4}
    \forall y\in \RR, \quad  &|\varphi_i'''(y) | + | \varphi_i''(y) |+|\psi'''(y)|+|y\psi'(y)|+|\psi(y)|
   \lesssim  \varphi'_i(y)\lesssim \varphi_i(y),\\
   \forall y\in (-\infty,2], \quad 
  \label{defphi4bis}
  &    e^{ |y|} \psi(y)  + e^{  |y|} \psi'(y) +\varphi_i(y)\lesssim \varphi_i'(y),\\
  \forall y\in \RR, \quad  &
  \varphi_2'(y)\lesssim \varphi_1(y) \lesssim \varphi'_2(y). \label{defphi4tri}
\end{align}
In particular,
\be\label{locpasloc}
{\mathcal{N}_{1,\rm loc}} (s) \lesssim{\mathcal{N}_{2,\rm loc}} (s) \lesssim  {\mathcal{N}_{1}} (s)\lesssim {\mathcal{N}_2} (s),\quad
\int\varepsilon^2 (s,y)     \varphi_{1,B} (y) dy\lesssim {\mathcal{N}_{2,\rm loc}} (s) .
\ee

 {\bf step 3} Control of $f_1^{(i)}$. Proof of \eqref{re1}. 
 We compute 
$ f_{1}^{(i)} $
using \eqref{eqebis}
\begin{align*}
  \rm f_{1}^{(i)}& = 2 \int   \left(  -\varepsilon_{yy} {+} \varepsilon  {-}\zz  \right)_y \left(-(\psi_B\e_y)_y{+}\e\varphi_{i,B}{-}\psi_B\zz \right) 
\\ & +2\left(\frac {{\lambda}_{s}}{{\lambda}}+{b}\right) \int {\Lambda} Q_b  \left( - (\psi_B\varepsilon_y)_y + \varepsilon \varphi_{i,B}-\psi_B \zz   \right)
\\ &   + 2 \left(\frac { x_{{s}}}{\lambda} -1\right) \int (Q_b  + \varepsilon+p\Yz)_y
 \left( -(\psi_B \varepsilon_y)_y + \varepsilon \varphi_{i,B}-\psi_B\zz  \right)
 \\ & + 2 \int  \Phi_{{b}}  \left( -(\psi_B \varepsilon_y)_y + \varepsilon \varphi_{i,B}-\psi_B\zz   \right)
\\ & + 2 \int  \Psi_{{b}}   \left( -(\psi_B \varepsilon_y)_y + \varepsilon \varphi_{i,B}-\psi_B\zz  \right) \\
& + \int \left(-p_s\Yz +(5Q^4(p-q))_y + \lsl p \Lambda \Yz\right) \left( -(\psi_B \varepsilon_y)_y + \varepsilon \varphi_{i,B}-\psi_B\zz  \right) 
\\ & = {{\rm f}^{(i)}_{1,1}}+ {{\rm f}^{(i)}_{1,2}}+{{\rm f}^{(i)}_{1,3}}+{{\rm f}^{(i)}_{1,4}}+{{\rm f}^{(i)}_{1,5}}+{{\rm f}^{(i)}_{1,6}}.
\end{align*}
\underline{{\em Term $f^{(i)}_{1,1}$}}:  We first integrate by parts  
\bee
f^{(i)}_{1,1}& = &  2 \int\left[  -\varepsilon_{yy} + \varepsilon  -\zz  \right]_y\left[-\e_{yy}+ \varepsilon  -\zz\right] \psi_B \\
& + & 2\int\left[  -\varepsilon_{yy} + \varepsilon  -\zz\right]_y\left(-\psi_B'\e_y+\e(\varphi_{i,B}-\psi_B)\right). 
\eee
We compute the various terms  :
\begin{align*} & 2\int\left[  -\varepsilon_{yy} + \varepsilon  -\zz  \right]_y\left[-\e_{yy}+ \varepsilon  -\zz\right]\psi_B =-\int\psi_B'\left[-\e_{yy}+ \varepsilon  -\zz\right]^2\\
&  =-\int\psi_B'\left[-\e_{yy}+ \varepsilon \right]^2  - \int\psi_B'\left\{\left[-\e_{yy}+ \varepsilon  -\zz\right]^2-\left[-\e_{yy}+\e\right]^2\right\}\\
&   = -\left[\int\psi_B'(\e_{yy}^2+2\e_y^2)+\int\e^2(\psi_B'-\psi_B''')\right]\\
&  -\int\psi_B'\left\{\left[-\e_{yy}+ \varepsilon  -\zz\right]^2-\left[-\e_{yy}+\e\right]^2\right\}.
\end{align*}
Next after integration by parts:
\bee
& & 2 \int\left[  -\varepsilon_{yy} + \varepsilon\right]_y\left[-\psi_B'\e_y+\e(\varphi_{i,B}-\psi_B)\right]\\
&  &  =-2 \Big\{\int\psi_B'\e_{yy}^2+\int\e_y^2(\tfrac32\varphi_{i,B}'-\tfrac12\psi'_B-\tfrac12\psi_B''')\\&&+\int\e^2(\tfrac12(\varphi_{i,B}-\psi_B)'-\tfrac12(\varphi_{i,B}-\psi_B)''')\Big\},
\eee

\bee
&-& 2 \int \zz_y(\varphi_{i,B}-\psi_B)\e =
2 \int \zz (\varphi_{i,B}'-\psi_B')\e + 2\int \zz (\varphi_{i,B}-\psi_B)\e_y\\
& = & 2 \int \zz (\varphi_{i,B}'-\psi_B')\e - \frac 13\int(\varphi_{i,B}-\psi_B)'\\ &&\left\{[(Q_b+\e+p\Yz+q)^6-(Q_b+p\Yz+q)^6-6Q_b^5\e -6q^5 \e -30 Q^4(p\Yz+q)\e  \right\}\\
& - & 2\int (\varphi_{i,B}-\psi_B)(Q_b)_y[(Q_b+\e+p\Yz+q)^5-(Q_b +p\Yz+q)^5-5Q_b^4\e]\\
& + & 40 \int (\varphi_{i,B}-\psi_B) Q' Q^3 (p\Yz+q) \e\\
& - & 2 \int (\varphi_{i,B}-\psi_B)(p \Yz'+q_y) [(Q_b+\e+p\Yz+q)^5-(Q_b +p\Yz+q)^5-5Q^4\e].
\eee

We collect the above computations and obtain the following
\begin{align*}
& f^{(i)}_{1,1} = -\int\left[3\psi_B'\e_{yy}^2+(3\varphi_{i,B}'+\psi_B'-\psi_B''')\e_y^2+(\varphi_{i,B}'-\varphi_{i,B}''')\e^2\right]\\
& -\frac 13 \int \left[ (\varepsilon {+} Q_b {+}p\Yz{+}q )^6{-}  {(Q_b{+}p\Yz{+}q) ^6}  {-}6 Q_b ^5 \varepsilon {-} 6 q^5 \e{-} 30 Q^4(p\Yz{+}q)\e{-}6Z \varepsilon \right](\varphi_{i,B}'{-}\psi_B')\\
& -   2\int (\varphi_{i,B}-\psi_B)(Q_b)_y[(Q_b+\e+p\Yz+q)^5-(Q_b +p\Yz+q)^5-5Q_b^4\e]\\
& + 40 \int (\varphi_{i,B}-\psi_B) Q' Q^3 (p\Yz+q) \e\\
& -   2 \int (\varphi_{i,B}-\psi_B)(p  \Yz'+q_y) [(Q_b+\e+p\Yz+q)^5-(Q_b +p\Yz+q)^5-5Q^4\e]\\
& +    2 \int Z_y \e_y \psi_B'  -  \int\psi_B'\left\{\left[-\e_{yy}+ \varepsilon  -\zz \right]^2-\left[-\e_{yy}+\e\right]^2\right\}\\
& =  (f^{(i)}_{1,1})^<+(f^{(i)}_{1,1})^{\sim}+(f^{(i)}_{1,1})^>
\end{align*}
where $(f^{(i)}_{1,1})^{<,\sim,>}$ respectively corresponds to integration on $y<-\frac B2$, $|y|\leq \frac B2$, $y>\frac B2$.

We recall
\be
\label{nonlinearsobolev}
 \|\e \|_{L^\infty} \lesssim \|\e\|_{H^1} \lesssim  {\delta(\alpha^*)}  .
\ee

$\bullet$ For the region $y<-B/2$, we rely on monotonicity type arguments and estimate using \eqref{defphi4}:
$$
\int_{y<-B/2} \varepsilon^2 |\varphi'''_{i,B}|\lesssim
  \frac 1 {B^2}  \int_{y<-B/2} \varepsilon^2 \varphi'_{i,B} \leq  \frac 1{100} \int_{y<-B/2} \varepsilon^2 \varphi'_{i,B},
$$
$$\int_{y<-B/2} \varepsilon_y^2|\psi'''_B|\lesssim
  \frac 1 {B^2}  \int_{y<-B/2} \varepsilon_y^2 \varphi'_{i,B} \leq  \frac 1{100} \int_{y<-B/2} \varepsilon_y^2 \varphi'_{i,B},$$
by choosing $B$ large enough.  
By \eqref{nonlinearsobolev} (for $B$ large and $\alpha^*$ small)
\begin{align*}
& \left| \int_{y<-B/2}  \left[ (\varepsilon {+} Q_b {+}p\Yz{+}q )^6{-}  {(Q_b{+}p\Yz{+}q) ^6}  {-}6 Q_b ^5 \varepsilon {-} 6 q^5 \e{-}30 Q^4(p\Yz{+}q)\e{-} 6Z \varepsilon \right](\varphi_{i,B}'{-}\psi_B') \right|
\\ 
& \lesssim    
 \int_{y<-B/2} \left(\delta(\alpha^*)  + (|Q_b| ^4+|p|^4+|q|^4) \varepsilon^2
 +(|b|+|p|+|q|)^2 |\e| \right)\varphi'_{i,B} 
 \\& \lesssim  \left(\delta(\alpha^*)+\delta(s_0^{-1})+e^{-\frac B{10} }\right)\int_{y<-B/2}  \varphi_{i,B}' \e^2 
 +\frac 1{s^2} \left( \int_{y<-B/2}  \varepsilon^2   \varphi'_{i,B}\right)^{\frac 12}
 \\
& \leq    \frac 1{100} \int_{y<-B/2}  \varepsilon^2   \varphi'_{i,B}+\frac 1{s^4},
\end{align*}
where we have used from the definition of $q$ and \eqref{BSp}
$$
\left(\int q^4  \varphi'_{i,B}\right)^{\frac 12} \lesssim \frac 1{s^2}.
$$
Similarily for $\alpha^*$ small depending on $B$,
\bee
&& \bigg| \int_{y<-\frac{B}{2}}  (\varphi_{i,B}-\psi_B)(Q_b)_y\left[(Q_b+\e+p\Yz+q)^5-(Q_b +p\Yz+q)^5-5Q_b^4\e\right] \\ &&-   20 \int (\varphi_{i,B}-\psi_B) Q' Q^3 (p\Yz+q) \e 
\bigg|\\
&& \lesssim   B \int_{y<-\frac B2}\left(|\e|^5+\e^2(|Q_b|^3+|p|^3+|q|^3)\right)(|Q_y|+|b||(P\chi_b)'|)\varphi_{i,B}' \\
&&+B \int |\e|\left(  |p|^2+|q|^2\right)
(|Q_y|+|b||(P\chi_b)'| )\varphi_{i,B}'\\
&&+B \int |\e|  (|p|+|q|)   
\left| (Q_b)_y Q_b^3 - Q'Q^3\right|\varphi_{i,B}'\\&&\leq \frac 1{100} \int_{y<-B/2}  (\e_y^2+\varepsilon^2)  \varphi'_{i,B}+\frac 1{s^4}.
\eee
The next term in $(f^{(i)}_{1,1})^<$ is
$$- 2\int (\varphi_{i,B}-\psi_B)(p \Yz'+q_y) [(Q_b+\e+p\Yz+q)^5-(Q_b +p\Yz+q)^5-5Q^4\e].$$
To estimate it, we note the following
\begin{align*}
&\left|(Q_b+\e+p\Yz+q)^5-(Q_b +p\Yz+q)^5-5Q^4\e \right|\\
& \lesssim   |\e|^5 +|\e|^2+|\e| (|b|+|p|+|q|).
\end{align*}
Now, using
\be\label{BSqy}
\int  |p|  \varphi_{i,B} dy \lesssim \frac 1{s}, 
\quad \int  |q_y|    \varphi_{i,B} dy \lesssim \frac 1{s^2},  \ee
we obtain proceeding as before
\begin{align*}
&\left|\int (\varphi_{i,B}-\psi_B)(p  \Yz'+q_y) [(Q_b+\e+p\Yz+q)^5-(Q_b +p\Yz+q)^5-5Q^4\e]
\right|\\ &\lesssim     \frac 1{100} \int_{y<-B/2} (\e_y^2+\varepsilon^2)  \varphi'_{i,B}+\frac 1{s^4}.
\end{align*}
  We further estimate using \fref{nonlinearsobolev} and   $(\varphi'_i)^2\lesssim \psi'\lesssim (\varphi'_i)^2$ for $y<-\frac 12$:
\bee
&& \left|\int Z_y \e_y \psi_b'\right| \\&& \lesssim 
\left|\int_{y<-\frac B2}\psi_B'\e_y\left\{(Q_b)_y[(Q_b+\e+p\Yz+q)^4-Q_b^4]-Q'Q^3(p\Yz+q)\right\} \right|\\
&&+\left|\int_{y<-\frac B2}\psi_B'\e_y   (p\Yz'+q_y) ((Q_b+\e+p\Yz+q)^4-Q^4) \right| + \left| \int q_y q^4 \e\psi_B'\right|\\
&& +\left| \int_{y<-\frac B2}\psi_B'\e_y^2 (Q_b+\e+p\Yz+q)^4\right|\\
&& \lesssim  \left( e^{- \frac 12 B} +\delta(s_0^{-1}) +\delta(\alpha^*)\right) \int_{y<-\frac B2}\varphi'_{i,B}(\e_y^2+\e^2) 
+\frac 1{s^4}\\
&& \leq 
 \frac 1{100}  \int_{y<-B/2}  (\e_y^2+\varepsilon^2)  \varphi'_{i,B}+\frac C{s^4}.
\eee
 
The remaining nonlinear term is estimated using the local $H^2$ control provided by localization
(see more details in \cite{MMR1})
\bee
& & \left|\int_{y<-\frac B2}\psi_B'\left\{\left[-\e_{yy}+ \varepsilon  -Z\right]^2-\left[-\e_{yy}+\e\right]^2\right\}\right|
\\
&=&   \left|\int_{y<-\frac B2}\psi_B' \left(-2\e_{yy}+ 2\varepsilon  -Z\right)  Z  \right|\\
& \leq & \frac{1}{100}\int_{y<-\frac B2}\psi_{B}'(|\e_{yy}|^2+|\e|^2)+C\int_{y<-\frac B2}(\varphi'_{i,B})^2Z^2\\
& \leq &  \frac 1{100} \int_{y<-B/2} \left[ \e_{yy}^2\psi_B'+(\e_y^2+\varepsilon^2)  \varphi'_{i,B}\right]
+\frac 1{s^4}.
\eee

$\bullet$ In the region $y>\frac B2$, we have $\psi_B(y)=1$, so that several terms cancel in $f_{1,1}^{(i)}$. For the remainding terms, we argue as before.
We rely on \eqref{defphi4} to estimate:
$$\int_{y>B/2} \varepsilon^2 |\varphi'''_{i,B}|\lesssim
  \frac 1 {B^2}  \int_{y>B/2} \varepsilon^2 \varphi'_{i,B} \leq  \frac 1{100} \int_{y>B/2} \varepsilon^2 \varphi'_{i,B},$$ and we use the exponential localization of $Q_b$ to the right and \eqref{nonlinearsobolev},   to control:  
  \begin{align*}
& \left| \int_{y>B/2} \left(\frac {(\varepsilon {+} Q_b {+}p\Yz{+}q )^6} 6
-\frac {(Q_b{+}p\Yz{+}q)^6} 6 -Q_b^5 \e - 5 Q^4 (p\Yz{+}q) - Z \e  \right)  \varphi_{i,B}' \right|
\\ 
&   \lesssim 
 \int_{y>B/2} \left(\varepsilon^6  + \left(|Q_b| ^4 +p^4 +q^4 \right)\varepsilon^2
 + (|b|+|q|+|p|)^2 |\e|\right)\varphi_{i,B}' \\
 & \lesssim  (\delta(\alpha^*)+\delta(s_0^{-1})+e^{-\frac B{10}})\int_{y>B/2}  \e^2   \varphi_{i,B}' + \frac 1{s^4}\\
& \leq  \frac 1{100} \int_{y>B/2}  \varepsilon^2   \varphi'_{i,B}
+\frac C{s^4},
\end{align*}
\begin{align*}
& \Bigg| \int_{y>B/2} \left[(\varepsilon+Q_b+p\Yz+q )^5 -(Q_b+p\Yz+q) ^5- 5 Q_b ^4 \varepsilon\right](Q_b )_y (\psi_B-\varphi_{i,B}) \\
&-20 \int_{y>B/2}  Q'Q^3 (p\Yz+q) \e (\psi_B-\varphi_{i,B})\Bigg|\\
&  \leq \frac 1{100} \int_{y>B/2}   \varepsilon^2   \varphi'_{i,B} +\frac C{s^4}.
\end{align*}
Since $\Yz\in \mathcal{Y}$, we   argue similarly to obtain
\begin{align*}
&\left| \int (\psi_B-\varphi_{i,B}) p\Yz' 
\left[ (Q_b+\e+p\Yz+q)^5 - (Q_b+p\Yz+q)^5 - 5Q^4 \e\right]\right| \\
&\leq \frac 1{100} \int_{y>B/2}   \varepsilon^2   \varphi'_{i,B} +\frac C{s^4}.
\end{align*}
Next, we have from \eqref{BSqy},
\begin{align*}
&\left| \int (\varphi_{i,B} - \psi_B) q_y \left[ (Q_b+\e+p\Yz+q)^5 - (Q_b+p\Yz+q)^5 - 5Q^4 \e\right]\right| \\
& 
 \leq \frac 1{100} \int_{y>B/2}   \varepsilon^2   \varphi'_{i,B} +\frac C{s^4}.
\end{align*}
Also,
\begin{align*}
\left| \int Z_y \e_y \psi_B'\right| =
\left| \int Z (\e_{yy} \psi_B' + \e_y \psi_B'')\right|
\leq \frac 1{100} \int (\e_{yy}^2 + \e_y^2 + \e^2) \psi_B' +\frac C{s^4}.
\end{align*}

$\bullet$ In the   region $|y|<B/2$, $\varphi_{i,B}(s,y) = 1+ y/B$ and $\psi_B(y)=1$. In particular, $\varphi_{i,B}'''=\psi_B'=0$ in this region, and we obtain:
\begin{align*}
(f^{(i)}_{1,1})^{\sim}  
&    = - \frac {1}{B} \int_{|y|<B/2}  \Big\{ 3 \varepsilon_y^2 + \varepsilon^2  \\
  &   + \frac 13  \left( (\varepsilon + Q_b +p\Yz+q)^6 
-(Q_b+p\Yz+q)^6  - 6 Q_b^5 \varepsilon -30 Q^4(p\Yz+q)\e
 -6 Z\varepsilon \right) 
  \\
  &  + 2\left( (\varepsilon+Q_b+p\Yz+q )^5 -(Q_b+p\Yz+q)^5- 5 Q_b ^4 \varepsilon\right) y (Q_b )_y  
\\
& -40 yQ'Q^3 (p\Yz+q) \e \\
& + 2 y(p \Yz'+q_y)\left((\varepsilon+Q_b+p\Yz+q )^5 -(Q_b+p\Yz+q) ^5- 5 Q_b ^4 \varepsilon\right)\Big\} \\
&    = - \frac {1}   { B} \int_{|y|<B/2}   \left\{ 3 \varepsilon_y^2 + \varepsilon^2 - 5 Q^4 \varepsilon^2 + 20 y Q' Q^3 \varepsilon^2\right\} + R_{\rm Vir}(\varepsilon),
\end{align*}
where
\begin{align*}
R_{\rm Vir}(\varepsilon) = - \frac {1}   { B} & \int_{|y|<B/2} \Bigg\{
\frac 13 \bigg( (\e+Q_b+p\Yz + q)^6 - (Q_b+p\Yz+q)^6 - 6Q_b^5 \e\\ &
-30 Q^4 (p\Yz+q)\e - 6Z\e  - 15 Q^4 \e^2 \bigg)\\
& +2 \left( (\e+Q_b+ p\Yz + q)^5 - (Q_b+p\Yz + q)^5 - 5 Q_b^4 \e\right) y (Q_b)_y
\\ &- 20 y Q' Q^3 (p\Yz+q) \e - 10 y Q' Q^3 \e^2 \\
& +2 y (p\Yz'+q_y) \left( (\e +Q_b+p\Yz+q)^5 - (Q_b+p\Yz +q)^5 - 5Q_b^4 \e\right)\Bigg\} 
\end{align*}
As before, we estimate
\begin{align*}
|R_{\rm Vir}(\varepsilon)|&\lesssim 
\int_{|y|<B} \e^2 \left( \delta(\alpha^*) + |b|+ |p|+|q|\right)
+ \int_{|y|<B} |\e| (p^2+ q^2 + b^2)
\\&\lesssim  \frac 1{100}\int_{|y|<B/2} (\e_y^2+\varepsilon^2) +\frac 1{s^4},
\end{align*}

We now recall from \cite{MMR1} the following coercivity result.

\begin{lemma}[Localized viriel estimate]\label{cl:9}
There exists $B_{0}>100$ and $\mu_{3}>0$ such that
if $B\geq B_{0}$, then
$$
\int_{|y|<B/2}   \left( 3 \varepsilon_y^2 + \varepsilon^2 - 5 Q^4 \varepsilon^2 + 20 y Q' Q^3 \varepsilon^2\right) \geq \mu_{3} \int_{|y|<B/2}  \left(  \varepsilon_y^2 + \varepsilon^2 \right)
-  \frac 1 B \int \e^2 e^{-\frac{|y|}{2}}.
$$
\end{lemma}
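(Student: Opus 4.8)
I would read Lemma~\ref{cl:9} as the \emph{localized} version of the unlocalized coercivity of the virial quadratic form, which is the genuine spectral content. Set
\[
\mathcal A v=-3v''+v-5Q^4v+20yQ'Q^3v,\qquad V=5Q^4-20yQ'Q^3\ \ (\ge 0),
\]
so that the integrand of the statement is $(\mathcal Av,v)$ with $v=\e$ integrated over $\{|y|<B/2\}$. The starting point is the identity $(\mathcal Av,v)=-\tfrac d{ds}\int yv^2$ along the model linearized flow $v_s=(Lv)_y$, $L=-\partial_y^2+1-5Q^4$, so $\mathcal A$ controls the virial monotonicity; since $V$ decays like $(1+|y|)e^{-4|y|}$, $\mathcal A$ is a relatively compact perturbation of $-3\partial_y^2+1$ and has essential spectrum $[1,+\infty)$, hence a finite dimensional non-positive eigenspace, which one identifies explicitly from the spectral structure of $L$ ($L$ has a single negative direction, $\ker L=\RR Q'$, $L\Lambda Q=-2Q$, $L(P')=\Lambda Q+20Q^3Q'P$). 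The resulting input, which is exactly what \cite{MMR1} establishes, is: there is $\mu_0>0$ such that for all $v\in H^1$,
\[
(\mathcal Av,v)\ \ge\ \mu_0\|v\|_{H^1}^2\ -\ \frac1{\mu_0}\bigl[(v,Q)^2+(v,\Lambda Q)^2+(v,y\Lambda Q)^2\bigr].
\]
The three orthogonality conditions \eqref{ortho1} are chosen precisely so that the bracket vanishes for $v=\e$, giving $\int_{\RR}(3\e_y^2+\e^2-V\e^2)\ge \mu_0\int_{\RR}(\e_y^2+\e^2)$.

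\noindent\textbf{Localization.} It then remains to descend from $\RR$ to the slab $\{|y|<B/2\}$, the only structural fact used being that $V$ is, up to exponentially small tails, supported in a \emph{fixed} compact set: fix $R_0$ with $V\le\tfrac12$ on $\{|y|\ge R_0\}$, so that $3\e_y^2+\e^2-V\e^2\ge \e_y^2+\tfrac12\e^2$ pointwise there. I would write $\int_{|y|<B/2}(3\e_y^2+\e^2-V\e^2)$ as a sum over the fixed core $\{|y|<R_0\}$, a dyadic family of annuli $\{B/2^{j+1}<|y|<B/2^j\}$ down to scale $\sim\sqrt B$, and the inner slab $\{|y|<\sqrt B\}$; on each annulus and on $\{R_0<|y|<\sqrt B\}$ use the pointwise bound, and on $\{|y|<\sqrt B\}$ apply the unlocalized estimate to $\e\zeta$ for a smooth cut-off $\zeta\equiv1$ on $\{|y|<\sqrt B\}$ supported in the previous dyadic slab, using $\int(\e\zeta)_y^2=\int\zeta^2\e_y^2-\int\e^2\zeta\zeta''$. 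Three error types appear, all routine: (i) commutator errors $-3\int\e^2\zeta\zeta''$, supported in the transition regions where $V$ is already negligible and carrying a favourable power $1/(\mathrm{scale})^2$, hence summing to $\lesssim B^{-1}\int_{|y|<B/2}\e^2$, absorbed into the main term for $B$ large; (ii) the potential-tail truncation $\int_{|y|\gtrsim B}V\e^2\lesssim e^{-cB}\int\e^2e^{-|y|/2}$; and (iii) the fact that $\e\zeta$ is only \emph{almost} orthogonal to $Q,\Lambda Q,y\Lambda Q$: since these decay like $(1+|y|)^2e^{-|y|}$ and $\e$ is orthogonal to them, $(\e\zeta,Q)=-(\e,(1-\zeta)Q)$ with $(1-\zeta)$ supported at $|y|\gtrsim\sqrt B$, so the projection errors are $\lesssim e^{-c\sqrt B}\int\e^2e^{-|y|}\le \tfrac1B\int\e^2e^{-|y|/2}$ for $B$ large. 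Collecting and using that the core and all annuli cover $\{|y|<B/2\}$ yields $\int_{|y|<B/2}(3\e_y^2+\e^2-V\e^2)\ge \mu_3\int_{|y|<B/2}(\e_y^2+\e^2)-\tfrac1B\int\e^2e^{-|y|/2}$ with $\mu_3=\mu_3(\mu_0,R_0)>0$ and $B$ large, which is the claim.

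\noindent\textbf{Main obstacle.} All the content is in the first step: the unlocalized coercivity is a true spectral statement, requiring the explicit determination of the non-positive subspace of $\mathcal A$ and the verification that the modulation-induced conditions \eqref{ortho1} are in general position with respect to it --- this is the ingredient imported from \cite{MMR1}. The localization is standard; the one point to watch is that it cannot be performed with a single cut-off equal to $1$ on most of $\{|y|<B/2\}$ (the deficit on the transition region would not be recoverable with a $B$-independent constant), which is why one uses the dyadic family reaching down past the fixed support of the potential.
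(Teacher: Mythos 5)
The paper offers no proof of this lemma at all: it is quoted verbatim from \cite{MMR1} (``We now recall from \cite{MMR1} the following coercivity result''), so the only thing to compare your proposal with is the argument in that reference, which you reconstruct faithfully. You correctly isolate the decisive ingredient, namely the unlocalized coercivity of the virial quadratic form $\int(3v_y^2+v^2-5Q^4v^2+20yQ'Q^3v^2)$ under the three orthogonality conditions of \eqref{ortho1} (your identity $(\mathcal Av,v)=-\frac{d}{ds}\int yv^2$ along $v_s=(Lv)_y$ checks out, as does the sign $V=5Q^4-20yQ'Q^3=5Q^4(1+4y\tanh(2y))\ge 0$); this is a genuine spectral statement established in \cite{MMR1} and earlier Martel--Merle work, and importing it rather than reproving it matches what the present paper does. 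Your localization step is also correct in substance: cut off, track the commutator $-3\int\e^2\zeta\zeta''$, the truncated potential tail, and the $O(e^{-cB})$ loss of orthogonality of $\e\zeta$, each of which is dominated by $\frac1B\int\e^2e^{-|y|/2}$ or absorbed into the main term. One correction, though: the dyadic family of annuli is unnecessary, and your stated reason for it (that a single cutoff leaves an unrecoverable deficit on the transition region) is not right. Take a single $\zeta$ equal to $1$ on $\{|y|<B/4\}$ and supported in $\{|y|<B/2\}$: on $\mathrm{supp}(1-\zeta^2)\cap\{|y|<B/2\}$ the potential is $O(e^{-B})$, so the pointwise bound $3\e_y^2+\e^2-V\e^2\ge \e_y^2+\tfrac12\e^2$ holds there, and adding $\mu_0\int\zeta^2(\e_y^2+\e^2)$ (from the unlocalized estimate applied to $\e\zeta$) to $\tfrac12\int_{|y|<B/2}(1-\zeta^2)(\e_y^2+\e^2)$ already yields $\min(\mu_0,\tfrac12)\int_{|y|<B/2}(\e_y^2+\e^2)$ with a $B$-independent constant, against which the commutator error $O(B^{-2})\int_{|y|<B/2}\e^2$ is absorbed for $B\ge B_0$ large. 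So the two-scale (let alone dyadic) decomposition buys nothing here; the rest of your argument stands as written.
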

 
Thus for $\alpha^*$ small enough:
$$(f^{(i)}_{1,1})^\sim   \leq  - \frac { \mu_{3}}   {2 B} \int_{|y|<B/2}  \left(  \varepsilon_y^2 + \varepsilon^2 \right) + \frac 1 {B^2} \int \e^2 e^{-\frac{|y|}{2}}+\frac 1{s^4}.$$ 
The collection of above estimates yields the bound:
\begin{equation}\label{eq:cf11}
   f^{(i)}_{1,1}  \leq  - \frac  {\mu_4} B    \int\left[\psi'_B\e_{yy}^2+\varphi'_{i,B}(\e_y^2+\e^2)\right] + \frac C{s^4},
    \end{equation}
    for some universal $\mu_4>0$ independent of $B$.
    
    \medskip

  \underline{{\em Term $f^{(i)}_{1,2}$}}:  We decompose  $f_{1,2}$ in a suitable way:
\begin{align*}
f^{(i)}_{1,2} & =  2 \left(\frac {{\lambda}_{s}}{{\lambda}}+{b}\right)  \int {\Lambda} Q  (L \varepsilon) -2\left(\frac {{\lambda}_{s}}{{\lambda}}+{b}\right)\int\varepsilon (1-\varphi_{i,B})\Lambda Q
\\
&+ 2b \left(\frac {{\lambda}_{s}}{{\lambda}}+{b}\right)  \int {\Lambda} ( \chi_b P)  \left(-(\psi_B\e_y)_{y}  + \e\varphi_{i,B} - \psi_B Z\right)\\
& + 2 \left(\frac {{\lambda}_{s}}{{\lambda}}+{b}\right)  \int {\Lambda} Q      \left(-(\psi_B)_y\e_y
-(1-\psi_B)\e_{yy}+(1-\psi_B)Z\right) \\&
+ 2\left(\frac {{\lambda}_{s}}{{\lambda}}+{b}\right)  \int {\Lambda} Q  (Z-5Q^4\e)
\end{align*}
Observe from \fref{ortho1}: $$\int \Lambda Q (L\e)=(\e,L\Lambda Q)=-2(\e,Q)=0.$$  We now use the orthogonality conditions $( \varepsilon, y {\Lambda} Q)=0$ and the definition of $\varphi_{i,B}$ to estimate:
$$
	\left| \int {\Lambda} Q \varepsilon (1-\varphi_{i,B}) \right| = \left| \int {\Lambda} Q \varepsilon \left(1-\varphi_{i,B}+ \frac yB\right)\right|
	\lesssim e^{- \frac B 8 } \mathcal{N}_{i,\rm loc}^{\frac 12}  ,
$$
so that  for $B$ large enough:
\begin{align*}
  \left|  \left(\frac {{\lambda}_{s}}{{\lambda}}+{b}\right)  \int {\Lambda} Q\varepsilon (1-\varphi_{i,B}) \right| 
&\lesssim \left( \mathcal{N}_{i,\rm loc}^{\frac 12}  +\frac 1{s^2} \right) e^{- \frac B 8}  \mathcal{N}_{i,\rm loc}^{\frac 12}  
\\ &
\leq 
 \frac 1 {500}  \frac {\mu_4} {B} \mathcal{N}_{i,\rm loc} 
+\frac C{s^4}.
\end{align*}
For the next term in $f_{1,2}^{(i)}$, we first integrate by parts to remove all derivatives on $\e$.
Then, by   the properties of $\varphi_{i,B}$, $\psi_B$, $P$  and $\chi_b$ \eqref{eq:210}, we obtain
for $\alpha^*$ small,
 \bee
&&\left| 2b \left(\frac {{\lambda}_{s}}{{\lambda}}+{b}\right)  \int {\Lambda} ( \chi_b P)  \left(-(\psi_B\e_y)_{y}  + \e\varphi_{i,B} - \psi_B Z\right)\right|\\
& \lesssim & |b| \left( \mathcal{N}_{i,\rm loc}^{\frac 12}  + \frac 1{s^2}\right)^2   \leq \frac 1 {500}  \frac {\mu_4} {B } \mathcal{N}_{i,\rm loc} (s)
+\frac C{s^4}.
\eee
Next, integrating by parts,   using   the exponential decay of $Q$ and since $\psi_B(y)\equiv 1$ on $[-\frac B2,\infty)$:
\bee
&&\left|  \left(\frac {{\lambda}_{s}}{{\lambda}}+{b}\right)  \int {\Lambda} Q      \left(-(\psi_B)_y\e_y
-(1-\psi_B)\e_{yy}+(1-\psi_B)Z\right) \right|\\
& \lesssim & \left( \mathcal{N}_{i,\rm loc}^{\frac 12}  + \frac 1{s^2}\right)^2  \leq \frac 1 {500}  \frac {\mu_4} {B} \mathcal{N}_{i,\rm loc} +\frac C{s^4},
\eee
and finally
\begin{align*}
&\left| \left(\frac {{\lambda}_{s}}{{\lambda}}+{b}\right)  \int {\Lambda} Q  \left[Z-5Q^4\e\right]
\right|  \lesssim \left( \mathcal{N}_{i,\rm loc}^{\frac 12} + \frac 1{s^2}\right) 
\left( \mathcal{N}_{i,\rm loc} + \frac 1{s^2}\right) 
\\
& \leq \frac 1 {500}  \frac {\mu_4} {B } \mathcal{N}_{i,\rm loc} (s)
+\frac C{s^4}.
\end{align*}
The collection of above estimates yields the bound:
 $$
 |f^{(i)}_{1,2}|\leq \frac 1 {100}  \frac {\mu_4} {B} \mathcal{N}_{i,\rm loc} + \frac C{s^4}.
 $$
\underline{{\em Term $f^{(i)}_{1,3}$}}: Integrating by parts, we claim the identity
\begin{align*}
& -\frac 16 \int \psi_B' \left[(Q_b+p\Yz+q+\e)^6 -Q_b^6 - 6Q_b^5\e - 6Q^5(p\Yz+q)
-6 Q_b q^5\right]\\
& = \int \psi_B (Q_b)_y Z - 5 \int \psi_B Q' Q^4 \e + \int (Q_b^5-Q^5)_y \e\psi_B+  5\int (Q_b-Q)_y Q^4(p\Yz+q) \psi_B \\
&+\int \psi_B p \Yz' Z +\int (Q^5-Q_b^5) p\Yz' \psi_B + 5\int Q^4(p\Yz+q) p\Yz'\psi_B
+\int \psi_B p \Yz' q^5 Q_b\\
& +\int \psi_B q_y \left[(Q_b+p\Yz+q+\e)^5 -Q^5\right]
+\int \psi_B \e_y Z + 5\int \psi_B \e_y Q^4 (p\Yz+q).
\end{align*}
Therefore,
\bee
f^{(i)}_{1,3}& = &   \left(\xsl-1\right) \bigg\{2\int Q'\left[L\e-\psi_B' \e_y + (1-\psi_B)\e_{yy}-\e(1-\varphi_{i,B})\right]\\
& + & 2 \int(Q_b-Q+\e+p\Yz)_y\left[-\psi_B'\e_y-\psi_B\e_{yy}+\e\varphi_{i,B}\right]\\
& + & \frac 13 \int\psi_B'\left[(Q_b+p\Yz+q+\e)^6-Q_b^6-6Q_b^5\e-6Q^5(p\Yz+q)
-6 Q_b q^5\right]\\
& + & 2  \int\e\psi_B(Q_b^5-Q^5)_y+ 10    \int \psi_B (Q_b-Q)_y Q^4 (p\Yz+q)\\
& + & 2   \int \psi_B(Q^5-Q_b^5) p\Yz' +10 \int Q^4 (p\Yz+q) p\Yz' \psi_B
+2 \int \psi_B p\Yz' q^5 Q_b\\
& + & 2    \int \psi_Bq_y \left[ (Q_b+p\Yz+q+\e)^5-Q^5-5Q_b q^4\right] +10\int \psi_B \e_y Q^4(p\Yz+q)\bigg\}.
\eee
The first term is treated using the cancellation $L Q'=0$ and the orthogonality conditions $(\varepsilon, \Lambda Q)=(\e,Q)=0$, so that $(yQ',\e)=0$. Thus, by the definitions of $\varphi_{i,B}$ and $\psi_B$,
\bee
&&\left|2\left(\xsl-1\right) \int Q'\left[L\e-\psi_B' \e_y + (1-\psi_B)\e_{yy}-\e(1-\varphi_{i,B})\right]\right| 
\\
&&=\left|2\left(\xsl-1\right) \int Q'\left[-\psi_B' \e_y + (1-\psi_B)\e_{yy}-\e\left(1+\frac yB-\varphi_{i,B}\right)\right]\right| 
\\&& \lesssim   \left(\mathcal{N}_{i,\rm loc}^{\frac 12}+ \frac 1{s^2}\right)e^{-\frac B {10}}\mathcal{N}_{i,\rm loc}^{\frac 12} \leq   \frac 1 {500}  \frac {\mu_4} {B} \mathcal{N}_{i,\rm loc}+\frac C{s^4}.
\eee
Then, as before, integrating by parts, and using Cauchy-Schwarz inequality,
\bee
&&\left|2b \left(\xsl-1\right)\int (\chi_bP)_y\left[-\psi_B'\e_y-\psi_B\e_{yy}+\e\varphi_{i,B}\right]\right|\\\
&&\lesssim
\frac 1{s} \left(\mathcal N_{i,\rm loc}^{\frac 12} + \frac 1{s^2} \right)
B^{\frac 12} \mathcal N_{i,\rm loc}\leq \frac 1 {500}  \frac {\mu_4} {B} \mathcal{N}_{i,\rm loc}+\frac C{s^4}.
\eee
\bee
&&\left|2\left(\xsl-1\right)\int  \e_y\left[-\psi_B'\e_y-\psi_B\e_{yy}+\e\varphi_{i,B}\right]\right|
\\ &&\lesssim \delta(\alpha^*) \int   \left(\varepsilon_y^2+\e^2\right)   \varphi'_{i,B}\leq \frac1{500} \frac  {\mu_4}{B}\int   \left(\varepsilon_y^2+\e^2\right)   \varphi'_{i,B}+\frac C{s^4}.
\eee
\begin{align*}
& \left|\xsl-1\right| \left| \int p\Yz' \left[ - \psi_B' \e_y
- \psi_B \e_{yy} + \e \varphi_{i,B}\right]\right|\\
& \lesssim \left(\mathcal N_{i,\rm loc}^{\frac 12} + \frac 1{s^2} \right)
\frac 1s \mathcal N_{i,\rm loc}^{\frac 12} 
\leq \frac1{500} \frac  {\mu_4}{B}\int   \left(\varepsilon_y^2+\e^2\right)   \varphi'_{i,B}+\frac C{s^4}.
\end{align*}
 In conclusion for $f^{(i)}_{1,3}$, on gets
$$
|f^{(i)}_{1,3}| \leq \frac1{50} \frac  {\mu_4}{B}\int   \left(\varepsilon_y^2+\e^2\right)   \varphi'_{i,B}+ \frac C{s^4},
$$
for $B$ large enough and $\alpha^*$ small enough, $s_0$ large enough.

\medskip

\underline{{\em Term $f^{(i)}_{1,4}$}}:  Recall
$$
f^{(i)}_{1,4} = - 2{b}_{s}  \int   \left(\chi_b   + \gamma   y (\chi_b)_y\right) P  \left(   - \psi_B\varepsilon_{yy} -\psi_B'\e_y+ \varepsilon \varphi_{i,B}-\psi_BZ   \right).$$
 We estimate after   integrations by parts
 \bee
 \left| \int   \left(\chi_b   + \gamma   y (\chi_b)_y\right) P  \left(   - \psi_B\varepsilon_{y } \right)_y\right|&\lesssim& \int|\e|\left|(\psi_B (  (\chi_b   + \gamma   y (\chi_b)_y) P)_y)_y\right| \\
 & \lesssim & B^{\frac 12}\mathcal{N}_{i,\rm loc}^{\frac 12}.
 \eee
 $$\left|\int   \left(\chi_b   + \gamma   y (\chi_b)_y\right) P \varepsilon \varphi_{i,B}\right|\lesssim B^{\frac 12}\mathcal{N}_{i,\rm loc}^{\frac 12}.$$ The estimate of the nonlinear term follows from   \fref{nonlinearsobolev} and $\psi\leq (\varphi'_i)^2$ for $y<-\frac 12$:
 \bee
 && \left|\int   \left(\chi_b   + \gamma   y (\chi_b)_y\right) P \psi_BZ\right|  \lesssim  \int\psi_B((|Q_b|^4|+p^4+q^4) \e|+|\e|^5
 +b^2 + p^2 + q^2)\\
 && \lesssim    B^{\frac 12}\left( \int (|\e|^2+ |\e|^6) \psi_B\right)^{\frac 12} +\frac 1{s^2}  \lesssim B^{\frac 12}
\left(\int   \left(\varepsilon_y^2+\e^2\right)   \varphi'_{i,B} \right)^{\frac 12} 
+\frac 1{s^2}  .
 \eee 
Together with \eqref{BSparam}, these estimates yield the bound:
$$|f_{1,4}| 
\leq  \frac 1 {500}  \frac {\mu_4} {B } 
 \int   \left(\varepsilon_y^2+\e^2\right)   \varphi'_{i,B}   +\frac C{s^4}.$$
 
 \medskip
 
\underline{{\em Term $f^{(i)}_{1,5}$}}:  Recall:
$$
f^{(i)}_{1,5}=    
 2  \int  \Psi_b    \left(   -(\psi_B\varepsilon_{y})_y  + \varepsilon \varphi_{i,B}-\psi_B Z   \right).
$$
We now rely on \eqref{BSbsps} to estimate by integration by parts and Cauchy-Schwarz's inequality,
$$
\left|  \int  (\Psi_b)_y    \psi_B\varepsilon_{y}\right| \lesssim    B^{\frac 12}  b^2 \mathcal{N}_{i,\rm loc}^{\frac12} \leq  \frac 1 {500}  \frac {\mu_4} {B} \mathcal{N}_{i,\rm loc} +\frac C{s^4}.
$$
By \eqref{eq:202}  and   the exponential decay of $\varphi_{i,B}$ in the left,
$$\left|  \int  \Psi_b \varphi_{i,B}\e\right|\lesssim \left( b^2 B^{\frac 12}+ e^{-\frac 1{2 |b|^{\gamma}}}\right)|b|^{1+\gamma}\mathcal{N}_{i,\rm loc}^{\frac12}\leq  \frac 1 {500}  \frac {\mu_4} {B } \mathcal{N}_{i,\rm loc} +C|b|^{4}.$$ For the nonlinear term, similarly and using \eqref{nonlinearsobolev},
\bee
 \left| \int  \Psi_b   \psi_B Z   \right|  \leq  \frac 1 {500}  \frac {\mu_4} {B} \int   \left(\varepsilon_y^2+\e^2\right)   \varphi'_{i,B}  +\frac C{s^4}.
\eee
The collection of above estimates yields the bound:
$$
| f^{(i)}_{1,5}| \leq \frac 1 {100}  \frac {\mu_4} {B} \int   \left(\varepsilon_y^2+\e^2\right)   \varphi'_{i,B}+\frac C{s^4}.$$
\medskip
 
\underline{{\em Term $f^{(i)}_{1,6}$}}: Recall that this term writes
\begin{align*}
\left| \int \left( -p_s \Yz + (5Q^4 (p-q))_y + \lsl p\Lambda \Yz\right)
\left( - (\psi_B \e_y)_y + \e \varphi_{i,B} - \psi_B Z\right) \right|
\end{align*} 
By \eqref{BSbsps}, \eqref{cl.2}, \eqref{BSparam},
\begin{align*}
& \left|  \left( -p_s \Yz + (5Q^4 (p-q))_y + \lsl p\Lambda \Yz\right)  \right|
+\left|  \left( -p_s \Yz + (5Q^4 (p-q))_y + \lsl p\Lambda \Yz\right)_y  \right|
\\ & \lesssim e^{-\frac {|y|}{2}} \left( \frac 1{s^2} + \int \e^2 e^{-\frac {|y|}{2}} \right), 
\end{align*}
and thus
$$
| f^{(i)}_{1,6}| \lesssim \delta(\alpha^*) \int \e^2 e^{-\frac {|y|}{2} } 
+\frac 1{s^4}.
$$

\medskip
{\bf step 4} $f_2^{(i,j)}$ term. 
 
Recall:
\bee
f^{(i,j)}_2 &= & 2 {\frac{{\lambda}_s}{{\lambda}}}  \int    {\Lambda} \varepsilon  \left( - (\psi_B\varepsilon_y)_y +  \varepsilon\varphi_{i,B} -\psi_BZ \right)+\frac js\mathcal F_{i}.
\eee
We integrate by parts to compute:
\begin{align*}
 & \int {\Lambda} \varepsilon (\psi_B\varepsilon_{y})_{y} =  -\int \e_y^2 \psi_B + \frac 12 \int \e^2_y y \psi_B',\\
 & \int  ({\Lambda} \varepsilon )    \varepsilon \varphi_{i,B} =- \frac 12 \int     \varepsilon^2    y\varphi_{i,B}',\\
 & \int {\Lambda} \varepsilon\psi_BZ
 = 
\int \left( \frac \e 2 + y\e_y\right) \psi_B  \left[ (Q_b+p\Yz+q+\e)^5 - Q_b^5 
-5 Q^4 (p\Yz+q)-q^5\right]\\
& = \int \frac \e 2 \psi_B \left[ (Q_b+p\Yz+q+\e)^5 - Q_b^5 
-5 Q^4 (p\Yz+q)-q^5\right]\\
& - \int (y\psi_B)_y  \bigg[ \frac {(Q_b+p\Yz+q+\e)^6} 6  
- \frac {(Q_b+p\Yz+q)^6} 6  - Q_b^5 \e \\&\qquad  - 5 Q^4(p\Yz+q) \e- Q_b^5 (p\Yz+q) -q^5 \e\bigg]\\
& - \int y \psi_B (Q_b)_y \big[ (Q_b+p\Yz+q+\e)^5 - (Q_b+p\Yz+q)^5 - 5Q_b^4 \e - 5 Q_b^4 (p\Yz+q) \e \\ &\qquad - 20 Q_b^3 (p\Yz+q) \e\big] \\
& - \int y \psi_B p\Yz' \left[ (Q_b+p\Yz+q+\e)^5 - (Q_b+p\Yz+q)^5 - 5Q^4 \e\right]\\
&+5 \int (Q_b^4 - Q^4)_y (p\Yz+q) \e y \psi_B\\
& - \int y \psi_B \left[ (Q_b+p\Yz+q+\e)^5 - (Q_b+p\Yz + q)^5 - 5Q^4 \e - Q_b^5 - 5 q^4 \e\right].
\end{align*}
Thus,
 \begin{align*}
& f^{(i,j)}_2 = 2 \lsl \bigg\{   \int \e_y^2 \psi_B -\frac 12 \int \e^2_y y \psi_B' - \frac 12 \int     \varepsilon^2    y\varphi_{i,B}'
 \\& + \int \frac \e 2 \psi_B \left[ (Q_b+p\Yz+q+\e)^5 - Q_b^5 
-5 Q^4 (p\Yz+q)-q^5\right]\\
& - \int (y\psi_B)_y  \bigg[ \frac {(Q_b+p\Yz+q+\e)^6} 6  
- \frac {(Q_b+p\Yz+q)^6} 6  - Q_b^5 \e \\ &  \qquad  - 5 Q^4(p\Yz+q) \e- Q_b^5 (p\Yz+q) -q^5 \e\bigg]\\
& - \int y \psi_B (Q_b)_y \big[ (Q_b+p\Yz+q+\e)^5 - (Q_b+p\Yz+q)^5 - 5Q_b^4 \e - 5 Q_b^4 (p\Yz+q) \e \\ &\qquad - 20 Q_b^3 (p\Yz+q) \e\big] \\
& - \int y \psi_B p\Yz' \left[ (Q_b+p\Yz+q+\e)^5 - (Q_b+p\Yz+q)^5 - 5Q^4 \e\right]\\
&+5 \int (Q_b^4 - Q^4)_y (p\Yz+q) \e y \psi_B\\
& - \int y \psi_B \left[ (Q_b+p\Yz+q+\e)^5 - (Q_b+p\Yz + q)^5 - 5Q^4 \e - Q_b^5 - 5 q^4 \e\right]\bigg\} +\frac js\mathcal F_{i}.
  \end{align*}
  For $y<-B$, we use the exponential decay of $\psi_B, \varphi_{i,B}$ and   \fref{defphi4}
to estimate:
\bee
&&\int_{y<-\frac B2}(\psi_B+|y|\psi'_B+\varphi_{i,B})(\e_y^2+\e^2) +|y|\varphi'_{i,B} \e^2\\
&& \lesssim  \int_{y<-\frac B2} \e_y^2 \varphi'_{i,B}+\int_{y<-\frac B2}|y|\varphi'_{i,B}\e^2\\
&&\lesssim  \int   \varepsilon_y^2    \varphi'_{i,B}  + \left(\int_{y<-\frac B2}|y|^{100}e^{\frac yB}\e^2\right)^{\frac{1}{100}}\left(\int_{y<-\frac B2}e^{\frac yB}\e^2\right)^{\frac{99}{100}} \\
&& \lesssim \int   \varepsilon_y^2    \varphi'_{i,B} +\mathcal{N}_{i,\rm loc}^{\frac{9}{10}},
\eee
where we have used $\int_{y<-\frac B2}|y|^{100}e^{\frac yB}\e^2\leq \|\e\|_{L^2}^2\leq \delta(\alpha^*)$.

 Together with similar estimates for the other terms,
this yields the bound:
$$|(f_2^{(i,j)})^<|\lesssim (\frac 1s+\mathcal{N}_{i,\rm loc}^{\frac 12})\left(\int   \varepsilon_y^2    \varphi'_{i,B} +\mathcal{N}_{i,\rm loc}^{\frac{9}{10}}+\frac 1{s^4}\right)\lesssim \delta(\alpha^*) \int  ( \varepsilon_y^2 +\e^2)   \varphi'_{i,B}  +
\frac 1{s^4}.$$

The middle term $f_{2}^{(i,j)}$ is also   estimated as follows
$$
|f_{2}^{(i,j)}|\leq \delta(\alpha^*) \int  ( \varepsilon_y^2 +\e^2)   \varphi'_{i,B}  +
\frac 1{s^4}.
$$

It only remains to estimate $(f_{2}^{(i,j)})^>$.
Most terms in $(f_{2}^{(i,j)}$ are easily estimated similarly as before. We focus on the following two delicate terms (because of weight at $+\infty$):
$$\int_{y> \frac B2}   \e^2    y\varphi_{i,B}', \quad
\frac j{s} \int_{y>\frac B2} \e^2 \varphi_{i,B}.$$
The function $\psi_B$ being bounded, the other terms are easier.

First, using \eqref{weightedonebis},
\begin{align*}
 \frac 1 s\int_{y>\frac B2} \e^2 \varphi_{i,B} & \lesssim
\frac 1s \left( 1+ \frac 1{\l^{\frac {10}9}}\right) \mathcal N_{2,\rm loc}^{\frac 89}\\
& \lesssim s^{\frac {10}9 \beta -1}\mathcal N_{2,\rm loc}^{\frac 89} 
\leq \frac 1 {100} \frac {\mu_4}{B} \mathcal N_{2, \rm loc} + s^{10 \beta -9}.
\end{align*}
Second, using $\mathcal N_{2,\rm loc}  \lesssim s^{-\frac 52 }$ and
$\beta<\frac 78$,
\begin{align*}
\int_{y> \frac B2}   \e^2    y\varphi_{i,B}' & \lesssim  \left(\frac 1s + \left( \int \e^2 e^{-\frac{|y|}{10}}\right)^{\frac 12} \right) \int_{y>\frac B2}   \e^2 \varphi_{i,B}\\
& \lesssim  \left(\frac 1s + \mathcal N_{2,\rm loc}^{\frac 12}  \right) s^{\frac {10}9 \beta }\mathcal N_{2,\rm loc}^{\frac 89} \leq \frac 1 {100} \frac {\mu_4}{B} \mathcal N_{2, \rm loc} + s^{10 \beta -9}.
\end{align*}

{\bf step 5} $f_3^{(i)}$ term.

\begin{align*}
f_3^{(i)}&= 2  \int \psi_B(Q_b)_{s} \left[(\varepsilon+Q_b+p\Yz+q)^5 - (Q_b+p\Yz+q)^5 - 5 \varepsilon Q_b^4\right]\\
& - 2 \int \psi_B (p_s \Yz+ q_s) \left[(\varepsilon+Q_b+p\Yz+q)^5  -(Q_b+p\Yz+q)^5 - 5 \varepsilon Q^4\right]\\
&+ 10 \int \psi_B q_s q^4 \e.
\end{align*}

First,
$$
|(Q_b)_s|=\left|b_s P \left( \chi(|b|^{\gamma} y) + \gamma |b|^\gamma y \chi'(|b|^{\gamma} y)\right)\right|\lesssim |b_s| .
$$

\begin{align*}
&\left| \int \psi_B(Q_b)_{s} \left[(\varepsilon+Q_b+p\Yz+q)^5 - (Q_b+p\Yz+q)^5 - 5 \varepsilon Q_b^4\right]\right| \\
  & \lesssim  |b_s|\int\psi_B \left(\e^2(|Q_b|^3+|p|^3+|q|^3+\delta(\alpha^*))  +|\e| (e^{-\frac {|y|}2} p^4+q^4) \right)\\
& \lesssim    \left(\frac 1{s^2} +  \mathcal{N}_{i,\rm loc}\right)  \left( \int (\e_y^2+\varepsilon^2)  \psi_{B} + \left( \int \e^2 \psi_B \right)^{\frac 12 } \left( \int q^8\psi_B \right)^{\frac 12}\right)\\
& \lesssim  \left(\delta(\alpha^*) + \delta(s_0^{-1}) \right) \int \varepsilon^2   \varphi_{i,B}' +\frac C{s^4}.
\end{align*}
For the next two terms, we first remark that by explicit computations:
$$
\int \psi_B (p_s \Yz+ q_s)^2\lesssim \frac 1{s^2}.
$$
Thus, as before,
\begin{align*}
& \left|\int \psi_B (p_s \Yz+ q_s) \left[(\varepsilon+Q_b+p\Yz+q)^5  -(Q_b+p\Yz+q)^5 - 5 \varepsilon Q^4\right] \right| \\
& \lesssim    \left(\delta(\alpha^*) + \delta(s_0^{-1}) \right) \int \varepsilon^2   \varphi_{i,B}' +\frac C{s^4}.
\end{align*}
Finally,
$$
\left|  \int \psi_B q_s q^4 \e \right| \lesssim \left( \int \psi_B (q_s q^4)^2 \right)^{\frac 12}
\left( \int \psi_B  \e^2 \right)^{\frac 12}\lesssim \int \psi_B  \e^2 +\frac 1{s^4}.
$$
\medskip

{\bf step 6} Proof of \fref{lowerbound}.

We proceed as in \cite{MMR1}. 
Recall that for $B$ large enough, $\mu>0$,
$$\int \psi_B\varepsilon_y^2 + \varphi_{i,B}\varepsilon^2- 5 \psi_B Q^4\e^2 \geq \mu \mathcal N_i.$$ 
We only have to estimate the error term as follows. 
For $s_0$ large enough, and $\alpha^*$ small enough,
\begin{align*}
& \left| \int \left( (Q_b{+}\e{+}p\Yz{+}q)^6 {-} (Q_b{+}p\Yz{+}q)^6  
{-} 6 \e (Q_b^5 {+} q^5 {+} 5 Q^4(p\Yz{+}q)) {-} 6 Q^4 \e^2\right) \psi_B\right| \\
& \lesssim
\frac 1 {s^2}\left( \int \psi_B  \e^2 \right)^{\frac 12}+
\frac 1s \int \psi_B  \e^2 \leq \frac \mu {10} \mathcal N_i+ \frac 1 {s^4} .
\end{align*}
This concludes the proof of Lemma \ref{propasymtp}.

\end{document}